\definecolor{red}{rgb}{1,0,0}
\newcommand{\xto}[1]{\xrightarrow{\phantom{a}{#1}{\phantom{a}}}}
\newcommand{\vvirg}{ , \dots , }
\newcommand{\ootimes}{ \otimes \cdots \otimes }
\newcommand{\ooplus}{ \oplus \cdots \oplus }
\newcommand{\ttimes}{ \times \cdots \times }
\newcommand{\contract}{\mathchoice
  {\rotatebox[origin=c]{180}{ \reflectbox{$\neg$} }}
  {\rotatebox[origin=c]{180}{ \reflectbox{$\neg$} }}
  {\scalebox{.7}{\rotatebox[origin=c]{180}{ \reflectbox{$\neg$} }}}
  {\scalebox{.5}{\rotatebox[origin=c]{180}{ \reflectbox{$\neg$} }}}
}
\newcommand{\bfn}{\mathbf{n}}
\newcommand{\bft}{\mathbf{t}}
\newcommand{\calB}{\mathcal{B}}
\newcommand{\calH}{\mathcal{H}}
\newcommand{\calL}{\mathcal{L}}
\newcommand{\calO}{\mathcal{O}}
\newcommand{\calS}{\mathcal{S}}
\newcommand{\bbC}{\mathbb{C}}
\newcommand{\bbN}{\mathbb{N}}
\newcommand{\bbP}{\mathbb{P}}
\newcommand{\bbT}{\mathbb{T}}
\newcommand{\bbZ}{\mathbb{Z}}
\newcommand{\frakS}{\mathfrak{S}}
\newcommand{\frakg}{\mathfrak{g}}
\renewcommand{\phi}{\varphi}
\renewcommand{\tilde}[1]{\widetilde{#1}}
\renewcommand{\hat}[1]{\widehat{#1}}
\renewcommand{\bar}[1]{\overline{#1}}
\newcommand{\id}{\mathrm{id}}
\newcommand{\Id}{\mathrm{Id}}
\newcommand{\rank}{\mathrm{rank}}
\newcommand{\image}{\mathrm{image}}  
\newcommand{\Gr}{\mathrm{Gr}}
\DeclareMathOperator{\End}{End}
\DeclareMathOperator{\Sym}{Sym}
\DeclareMathOperator{\Aut}{Aut}
\DeclareMathOperator{\Inn}{Inn}
\DeclareMathOperator{\Out}{Out}
\DeclareMathOperator{\Pic}{Pic}
\newcommand{\GL}{\mathrm{GL}}
\newcommand{\SL}{\mathrm{SL}}
\newcommand{\PGL}{\mathrm{PGL}}
\newcommand{\frakgl}{\mathfrak{gl}}
\newcommand{\Sub}{\mathrm{Sub}}
\renewcommand{\det}{\mathrm{det}}
\newcommand{\conv}{\mathrm{conv}}
\newcommand{\Part}{\mathrm{Part}}
\newcommand{\Sing}{\mathrm{Sing}}
\newcommand{\rmCH}{\mathrm{CH}}
\def\ba#1\ea{\begin{align}#1\end{align}}
\newcommand{\setft}[1]{\mathrm{#1}}
\newcommand{\globalcolor}[1]{%
  \color{#1}\global\let\default@color\current@color
}
\newtheorem{theorem}{Theorem}[section]
\newtheorem{lemma}[theorem]{Lemma}
\newtheorem{proposition}[theorem]{Proposition}
\newtheorem{corollary}[theorem]{Corollary}
\theoremstyle{definition}
\newtheorem{remark}[theorem]{Remark}
\newtheorem{example}[theorem]{Example}
\title[Linear preservers]{Linear preservers of secant varieties \\ and other varieties of tensors}
\author[F. Gesmundo, Y. Han, B. Lovitz]{Fulvio Gesmundo, Young In Han, Benjamin Lovitz}
\address[F. Gesmundo]{Institut de Mathématiques de Toulouse; UMR5219 -- Université de Toulouse; CNRS -- UPS, F-31062 Toulouse Cedex 9, France}
\email{fgesmund@math.univ-toulouse.fr}
\address[Y. Han]{University of Waterloo, Waterloo, Ontario, Canada}
\email{youngin.daniel.han@gmail.com}
\address[B. Lovitz]{Northeastern University, Boston, Massachusetts, USA}
\email{benjamin.lovitz@gmail.com}
\keywords{Linear preserver, Secant variety, Tensor}
\subjclass[2020]{15A69, 15A86, 14N07, 47B49}
\newcommand\blfootnote[1]{%
  \begingroup
  \renewcommand\thefootnote{}\footnote{#1}%
  \addtocounter{footnote}{-1}%
  \endgroup
}
\begin{document}

\begin{abstract}
We study the problem of characterizing linear preserver subgroups of algebraic varieties, with a particular emphasis on secant varieties and other varieties of tensors. We introduce a number of techniques built on different geometric properties of the varieties of interest. Our main result is a simple characterization of the linear preservers of secant varieties of Segre varieties in many cases, including $\sigma_r((\bbP^{n-1})^{\times k})$ for all $r \leq n^{\lfloor k/2 \rfloor}$.
We also characterize the linear preservers of several other sets of tensors, including subspace varieties, the variety of slice rank one tensors, symmetric tensors of bounded Waring rank, the variety of biseparable tensors, and hyperdeterminantal surfaces. Computational techniques and applications in quantum information theory are discussed. We provide geometric proofs for several previously known results on linear preservers.
\end{abstract}

\maketitle

\section{Introduction} \blfootnote{ \hrule ~ \\ 
Fulvio Gesmundo (corresponding author). Institut de Mathématiques de Toulouse; UMR5219 -- Université de Toulouse; CNRS -- UPS, F-31062 Toulouse Cedex 9, France  \\
Young In Han. University of Waterloo, Waterloo, Ontario, Canada \\
Benjamin Lovitz. Northeastern University, Boston, Massachusetts, USA 
}
Given a group $G$ acting on a set $V$ and a subset $X \subseteq V$, let
\[
G_X: = \{ g \in G : g(x) \in X \text{ for every $x \in X$}\}
\]
be the \textit{preserver of $X$}; this is the largest subgroup of $G$ mapping $X$ to itself. Determining $G_X$ is useful in many contexts for characterizing the symmetries of a problem~\cite{GUTERMAN200061,li2001linear,tan2003determinant,molnar2007selected,johnston2011characterizing,contreras2019resource,oneto2023hadamard}.

In this work, we focus on the setting where $V$ is a complex vector space and $G:=\GL(V)$ is the general linear group, wherein characterizing $G_X$ is commonly known as a \textit{linear preserver problem}. We present various techniques for determining $G_X$ when $X$ is an algebraic subvariety of $V$, focusing on the case when $X$ forms a cone over the origin. In this case, we identify $X$ with the corresponding projective variety $X \subseteq \bbP V$ with the induced action of $\GL(V)$. Characterizing $G_X$ is particularly well-motivated in this setting. For example, if $X$ is preserved by a subgroup $H \subseteq \GL(V)$, then each homogeneous component $I_d(X)$ of its defining ideal is a representation of $H$: in this setting representation theory provides powerful tools to determine $I(X)$. If $H=G_X$ is the maximal such subgroup, then $I_d(X)$ will have the fewest number of irreducible components to keep track of. So characterizing $G_X$ allows one to take maximal advantage of the symmetries of $X$.

A common example that we consider is the case where $X$ is a secant variety of some other variety $Y$. If $Y \subseteq \bbP V$, define
\[
\sigma^\circ_r(Y) := \{ x \in \bbP V : x \in \langle y_1 \vvirg y_r \rangle \text{ for some } y_1 \vvirg y_r \in Y\} ;
\]
the $r$-th secant variety of $Y$ is $\sigma_r(Y) = \bar{\sigma^\circ_r(Y)}$, where the closure can be taken equivalently in the Euclidean or the Zariski topology. The inclusion $G_{\sigma_r(Y)} \supseteq G_Y$ always holds. When $G_{\sigma_r(Y)} = G_Y$ we say that the $\sigma_r(Y)$ has the \textit{expected preserver}. However, the inclusion can be strict: an immediate case occurs if $\sigma_r(Y)=\bbP V$, so that $\GL(V)=G_{\sigma_r(Y)} \supsetneq G_Y$.

The tensor setting is of particular interest. Given complex vector spaces $V_1 \vvirg V_k$, let $V=V_1 \otimes \dots \otimes V_k$ be their tensor product and $Y=\bbP V_1 \times \dots \times \bbP V_k \subseteq \bbP V$ be the \textit{Segre variety} of pure tensors $v_1 \otimes \dots \otimes v_k$. The natural action of $\GL(V_1) \ttimes \GL(V_k) $ defines a subgroup $G(V_1 \vvirg V_k) \subseteq \GL(V)$ which clearly preserves $Y$; moreover, permutation of the tensor factors of the same dimension preserves $Y$ as well. This shows $G_{Y} \supseteq G(V_1 \vvirg V_k) \rtimes \frakS$ where $\frakS \subseteq \frakS_k$ is the subgroup of the symmetric group permuting factors of the same dimension. It was proven in~\cite{westwick1967} that in fact equality holds; see also \autoref{prop:seg_ver}.

The representation theory of $G(V_1 \vvirg V_k) \rtimes \frakS$ can be determined using \textit{Schur-Weyl duality} and the representation theoretic approach has been extensively used to determine equations for $\sigma_r(Y)$~\cite{weyman2003cohomology,landsberg2004ideals,landsberg2007ideals,landsberg2012tensors,raicu2012secant,landsberg2013equations}. However, one could take advantage of yet more symmetries if $\sigma_r(Y)$ has a preserver larger than expected. For example, if $Y=\bbP(\bbC^2) \times \bbP(\bbC^{a}) \times \bbP(\bbC^{b})$ and $a \leq r <\min\{b, 2a\}$, then we will observe that $G_{\sigma_r(Y)}=G(\bbC^2 \otimes \bbC^a,\bbC^b)$ instead of the expected $G(\bbC^2 ,\bbC^a,\bbC^b)$, see \autoref{prop:example}. In spite of this example, our main result is that $r$-th secant varieties of Segre varieties have the expected preserver in many cases; namely when there are ``enough" $r \times r$ minors of flattenings that are not identically zero on $\sigma_r(Y)$; we refer to \autoref{sec:prelim} for the relevant definitions.

For an integer $k$, let $[k] = \{ 1 \vvirg k\}$. Following \cite[Definition 3.6]{draisma2011partition}, we say that a collection of bipartitions $\calB$ of $[k]$ is \emph{separating} for $[k]$ if for every $i,j \in [k]$ there exist a bipartition $\{ I,J\} \in \calB$ such that $i \in I , j \in J$. Given $\bfn = (n_1 \vvirg n_k)$, let
\begin{equation*}
s(\bfn) = \max \left\{ \min \left\{ \textstyle \prod _{i \in I} n_i,  \prod _{j \in J} n_j : \{I,J\} \in \calB \right\} : \calB \text{ separating collection for $[k]$}\right\}.
\end{equation*}

\begin{theorem}\label{thm:main}
Let $\bfn = (n_1 \vvirg n_k)$ be a $k$-tuple of positive integers. Let $1\leq r \leq s(\bfn)-1$ be an integer. Let $V_1 \vvirg V_k$ be vector spaces with $\dim V_i = n_i$, let $Y = \bbP V_1 \ttimes \bbP V_k$ and $X = \sigma_r(Y)$. Then $G_{\sigma_r(Y)}={G(V_1 \vvirg V_k) \rtimes \frakS}$, where $\frakS \subseteq \frakS_k$ is the subgroup permuting factors of the same dimension.
\end{theorem}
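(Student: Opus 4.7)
The plan combines a flattening reduction with the separating property of the collection achieving $s(\bfn)$. The inclusion $G_Y \subseteq G_{\sigma_r(Y)}$ is immediate and $G_Y = G(V_1, \ldots, V_k) \rtimes \frakS$ by \autoref{prop:seg_ver}, so it suffices to prove that any $g \in \GL(V)$ preserving $\sigma_r(Y)$ already lies in $G(V_1, \ldots, V_k) \rtimes \frakS$.

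Fix a separating collection $\calB$ of bipartitions of $[k]$ realizing $s(\bfn)$, so that for every $\{I,J\} \in \calB$ one has $r + 1 \leq s(\bfn) \leq \min\bigl(\prod_{i \in I} n_i,\ \prod_{j \in J} n_j\bigr)$. For each such bipartition, the flattening $\phi_{I,J} : V \xrightarrow{\sim} V_I \otimes V_J$ sends $\sigma_r(Y)$ into the variety $M_{\leq r}(V_I, V_J)$ of matrices of rank at most $r$, cut out by the $(r+1) \times (r+1)$ minors. The above inequality guarantees that $M_{\leq r}(V_I, V_J)$ is a \emph{proper} subvariety, so these minors form a genuinely nontrivial subspace of $I_{r+1}(\sigma_r(Y))$; this is precisely where the bound on $r$ enters.

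The crux of the argument is to show that for every $\{I,J\} \in \calB$, the operator $\hat g_{I,J} := \phi_{I,J} \circ g \circ \phi_{I,J}^{-1}$ preserves $M_{\leq r}(V_I, V_J)$, equivalently that $g$ preserves the subspace variety $\phi_{I,J}^{-1}(M_{\leq r}(V_I, V_J)) \subseteq V$. The natural route is to identify the span of the $(r+1) \times (r+1)$ minors of the $\{I,J\}$-flattening intrinsically inside $I_{r+1}(\sigma_r(Y))$ — for instance as a distinguished $G(V_1, \ldots, V_k)$-isotypic component pulled back along $\phi_{I,J}$ — so that the action of $g$ on the ideal must preserve it and hence preserve its zero locus. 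This is the step I expect to be hardest: $g$ acts on the full ideal of $\sigma_r(Y)$, but singling out the flattening-minor subspace requires either a careful representation-theoretic argument, or an inductive approach on $r$ with base case $r = 1$ given by \autoref{prop:seg_ver} and using that in this range the singular locus of $\sigma_r(Y)$ should coincide with $\sigma_{r-1}(Y)$.

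Once the crux is in place, the classical Marcus--Moyls/Westwick theorem for rank-bounded matrices applies in each direction: since $r + 1 \leq \min(\dim V_I, \dim V_J)$, the operator $\hat g_{I,J}$ has the form $(A_I \otimes A_J) \circ \tau$, with $A_I \in \GL(V_I)$, $A_J \in \GL(V_J)$, and $\tau$ either the identity or, when $\dim V_I = \dim V_J$, the transpose. Finally, because $\calB$ is separating, every pair $\{i,j\} \subseteq [k]$ is split by some $\{I,J\} \in \calB$; the resulting decompositions of $g$ along different bipartitions are mutually compatible and jointly force a refinement to $g = A_1 \otimes \cdots \otimes A_k$, up to the permutations in $\frakS$ of factors of equal dimension. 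This places $g$ in $G(V_1, \ldots, V_k) \rtimes \frakS$, completing the argument.
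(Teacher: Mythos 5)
The overall geometric picture you sketch — flattenings along a separating collection realizing $s(\bfn)$, and the role of the rank bound — matches the paper's, and your opening reduction to showing $G_{\sigma_r(Y)} \subseteq G_Y$ is exactly right. But the step you yourself flag as the crux is a genuine gap, and the paper does not close it along either of the two routes you propose.

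Your route is to show that $g \in G_{\sigma_r(Y)}$ must preserve each flattening variety $\phi_{I,J}^{-1}(M_{\leq r}(V_I,V_J))$, then invoke Marcus--Moyls/Westwick in each direction and glue. The difficulty is that $\sigma_r(Y)$ is in general a \emph{proper} subvariety of $\bigcap_{\{I,J\}\in \calB} \phi_{I,J}^{-1}(M_{\leq r})$, and nothing about the hypothesis forces $g$ to preserve the larger variety. Your proposal to isolate the flattening-minor subspace of $I_{r+1}(\sigma_r(Y))$ as a distinguished isotypic component cannot work as stated: isotypic decomposition is with respect to the $G(V_1,\ldots,V_k)$-action, but $g$ is not yet known to normalize that group, so $g$ need not preserve isotypic pieces. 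Your alternative suggestion — induction on $r$ using $\Sing(\sigma_r(Y)) = \sigma_{r-1}(Y)$ — is also unavailable: that description of the singular locus is a theorem in the two-factor and curve cases (\autoref{prop:bipartite}, \autoref{prop:rnc}), but it is not known, and is in general false, for Segre varieties with $k \geq 3$ factors and arbitrary $r$.

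The paper sidesteps the issue entirely: it never tries to show $g$ preserves any flattening variety. Instead it invokes \autoref{prop: prolongation} (the Sidman--Sullivant prolongation theorem), which identifies $I_{r+1}(\sigma_r(Y)) = I_2(Y)^{(r-1)}$, and then runs a single computation: the span $M_r$ of size-$(r+1)$ minors of the flattenings in $\calB$ lies inside $I_2(Y)^{(r-1)}$, and because $s(\bfn) \geq r+1$ every $2\times 2$ submatrix of each such flattening extends to an $(r+1)\times(r+1)$ submatrix of that generic flattening, so the $(r-1)$-th order partials of $M_r$ already span all of $I_2(Y)$. It follows that $S^{r-1}V \contract I_{r+1}(\sigma_r(Y)) = I_2(Y)$, and then \autoref{lemma: stabilize ideal} together with \autoref{lemma: derivatives} yields $G_{\sigma_r(Y)} \subseteq G_{I_2(Y)} = G_Y$ in one step — no Marcus--Moyls, no gluing over the separating collection. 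That derivative trick, pushing preservation of the degree-$(r+1)$ piece of the ideal down to degree $2$, is the missing ingredient you should be looking for.
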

In particular, $G_{\sigma_r((\bbP^{n-1})^{\times k})} = G(\bbC^{n},\dots,\bbC^n) \rtimes \frakS_k$ for all $r \leq n^{\lfloor k/2 \rfloor}$, see \autoref{cor:secant_segre}. \autoref{thm:main} recovers the classical result of~\cite{westwick1967} characterizing the preservers of Segre varieties, and the result of~\cite{GUTERMAN200061} that secant varieties of two-factor Segre varieties have the expected preserver unless they fill the ambient space. Our proof of this theorem uses the prolongation result of~\cite{sidman2009prolongations}, which characterizes the degree $r+1$ equations for $\sigma_{r+1}(Y)$.

We introduce some mathematical preliminaries in~\autoref{sec:prelim} and introduce several techniques for computing preservers in~\autoref{sec:tech}. We prove~\autoref{thm:main} in~\autoref{sec:secant}. We use the methods we introduced in order to determine the preservers of several other sets of tensors, including subspace varieties in \autoref{sec:subspace}, slice rank 1 tensors and more general partition varieties in \autoref{sec:part}, and symmetric tensors of bounded Waring rank in \autoref{sec:waring}. In~\autoref{sec:quant} we characterize preservers of some sets of quantum states, including {biseparable states} and hyperdeterminantal surfaces. In~\autoref{sec:compute} we use a Lie algebra approach to directly compute preservers of secant varieties in some small cases.

\subsection*{Acknowledgments}
We thank Kangjin Han, Nathaniel Johnston, Joseph Landsberg, Giorgio Ottaviani, and William Slofstra for helpful discussions and for pointing out some important references. B.L. acknowledges that this material is based upon work supported by the National Science Foundation under Award No. DMS-2202782.

\section{Preliminaries}\label{sec:prelim}

In the following, a \emph{variety} is an affine or a projective complex algebraic variety. We refer to \cite{harris2013algebraic} for the definition and the basic results on algebraic varieties that we use in the following. Given a variety $X \subseteq \bbP V$, let $I(X) \subseteq \bbC[V] \simeq \Sym V^*$ denote its ideal: $I(X)$ is a homogeneous saturated ideal in the polynomial ring $\bbC[V]$. If $x \in X$, let $T_x X \subseteq V$ denote the affine tangent space of $X$. For every $x$, one has $\dim X +1 \leq \dim T_x X$ and a point $x \in X$ is called \emph{smooth} if equality holds; if the inequality is strict, $x$ is called \emph{singular}. The set $\Sing(X) \subseteq X$ of singular points is a (Zariski) closed subset of $X$ called the \emph{singular locus} of $X$. For a vector space $V$ and an integer $d$, let $S^d V$ denote the subspace of symmetric tensors in $V^{\otimes d}$, that is those tensors which are invariant under permutation of the tensor factors; the space $S^d V$ can be identified with the space $\bbC[V^*]_d$ of homogeneous polynomials of degree $d$ on the dual space $V^*$. A tensor $T \in V_1 \ootimes V_k$ induces linear maps $T_I : \bigotimes_{i \in I} V_i^* \to \bigotimes_{j \notin I} V_j$ called the \emph{flattening maps} of $T$. We will sometimes drop the index and simply write $T : \bigotimes_{i \in I} V_i^* \to \bigotimes_{j \notin I} V_j$ if no confusion arises.

For a variety $X \subseteq \bbP V$, let $G_X \subseteq \GL(V)$ be its linear preserver subgroup. The group $G_X$ is a closed algebraic subgroup of $\GL(V)$, hence it is a union of finitely many connected (irreducible) components. Let $G_X^\circ$ be the connected component containing the identity element $\id_V$; it is easy to see that $G_X^\circ$ is a normal subgroup of $G_X^\circ$; see e.g. \cite[Ch.I,Sec. 1.2]{Borel:LinAlgGp}. The quotient $G_X / G_X^\circ$ is finite.

Let $\Aut(X)$ be the group of (algebraic) automorphisms of a variety $X$; these are regular bijective maps whose inverse is regular; it is an important fact that $\Aut(X)$ does not depend on the embedding $X \subseteq \bbP V$. Every element $g \in G_X$ induces an automorphism. This gives rise to a map $G_X \to \Aut(X)$; if $X$ is linearly non-degenerate in $\bbP V$, then the kernel of this map is the central subgroup $Z(\GL(V)) \subseteq \GL(V)$ of scalar multiples of the identity.

Given vector spaces $V_1 \vvirg V_k$ with $\dim V_i \geq 2$ and integers $d_1 \vvirg d_k \geq 1$, the space of partially symmetric tensors of multidegree $(d_1 \vvirg d_k)$ is  $S^{d_1} V_1 \ootimes S^{d_k}V_k$. The natural action of $\GL(V_1) \ttimes \GL(V_k)$ on $S^{d_1} V_1 \ootimes S^{d_k}V_k$ defines a group homomorphism
\[
\GL(V_1) \ttimes \GL(V_k) \to \GL(S^{d_1} V_1 \ootimes S^{d_k}V_k)
\]
whose kernel is $\{ (\lambda_1 \id_{V_1} \vvirg \lambda_k \id_{V_k}) : \lambda_1^{d_1} \cdots \lambda_k^{d_k} = 1\}$. Denote by $G(V_1 \vvirg V_k)$ the image of this homomorphism in $\GL(S^{d_1} V_1 \ootimes S^{d_k}V_k)$. We point out that $G(V_1 \vvirg V_k)$, as an abstract group, depends both on the spaces $V_1 \vvirg V_k$ and on the degrees $d_1 \vvirg d_k$.

 The \textit{Segre-Veronese embedding} of $\bbP V_1 \ttimes \bbP V_k$ is the map
\begin{align*}
\nu_{d_1 \vvirg d_k} : \bbP V_1 \ttimes \bbP V_k &\to \bbP (S^{d_1} V_1 \ootimes S^{d_k} V_k) \\
([v_1] \vvirg [v_k]) &\mapsto [v_1^{d_1} \ootimes v_k^{d_k}].
\end{align*}
The image of $\nu_{d_1 \vvirg d_k}$ is called the \textit{Segre-Veronese variety}. Its $r$-th secant variety $\sigma_r( \nu_{d_1 \vvirg d_k} (\bbP V_1 \ttimes \bbP V_k))$ parameterizes partially symmetric tensors in $S^{d_1} V_1 \ootimes S^{d_k} V_k$ admitting a decomposition as sum of $r$ elements of the Segre-Veronese variety (or limits of such).

The following result is classical, and it allows us to recover and generalize the result of \cite{westwick1967}. The proof is a generalization of \cite[Example II.7.1.1]{Hart:AlgGeom}, together with standard facts on the Picard group of product of projective spaces. We outline it in \autoref{appendix:automorphisms} and we refer to \cite{Cox95} for more general results of this type. For a vector space $V$, let $\PGL(V) = \GL(V) / Z(\GL(V))$.

\begin{lemma}\label{lemma: automorphisms of multiP}
Let $V_1 \vvirg V_k$ be vector spaces with $\dim V_i \geq 2$. Then $\Aut(\bbP V_1 \ttimes \bbP V_k) = (\PGL(V_1) \ttimes \PGL(V_k) ) \rtimes \frakS$, where $ \frakS \subseteq \frakS_k$ is the subgroup of the symmetric group permuting factors of the same dimension.
\end{lemma}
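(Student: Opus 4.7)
The plan is to exploit the induced action of $\Aut(X)$ on the Picard group of $X := \bbP V_1 \ttimes \bbP V_k$. Write $\pi_i \colon X \to \bbP V_i$ for the projection onto the $i$-th factor and set $\calO_i := \pi_i^* \calO_{\bbP V_i}(1)$. The inclusion $(\PGL(V_1) \ttimes \PGL(V_k)) \rtimes \frakS \subseteq \Aut(X)$ is immediate, so only the reverse inclusion requires proof.

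First I would recall the standard computation for products of projective spaces: $\Pic(X) \cong \bbZ^k$ is freely generated by the classes $[\calO_i]$, and the K\"unneth formula gives
\[
H^0(X, \calO_1^{a_1} \otimes \dots \otimes \calO_k^{a_k}) \cong S^{a_1} V_1^* \ootimes S^{a_k} V_k^*
\]
when all $a_i \geq 0$, and zero otherwise. In particular the nef cone of $X$ corresponds to $\bbZ_{\geq 0}^k$ under this identification, and its extremal rays are spanned precisely by the classes $[\calO_i]$.

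Next, given $\phi \in \Aut(X)$, the pullback $\phi^*$ is an automorphism of $\Pic(X)$ preserving the nef cone, hence it permutes its extremal rays: there exists $\tau \in \frakS_k$ with $\phi^* \calO_i \cong \calO_{\tau(i)}$. Since $\phi^*$ also preserves $\dim H^0(X, \calO_i) = \dim V_i$, the permutation $\tau$ must satisfy $\dim V_{\tau(i)} = \dim V_i$, so $\tau \in \frakS$. After composing $\phi$ with $\tau^{-1}$ we may therefore assume $\phi^* \calO_i \cong \calO_i$ for every $i$.

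Finally, each projection $\pi_i$ is the morphism associated to the complete linear system $|\calO_i|$; since $\phi^* \calO_i \cong \calO_i$, the composition $\pi_i \circ \phi$ is attached to the same linear system, whence $\pi_i \circ \phi = \psi_i \circ \pi_i$ for a unique $\psi_i \in \Aut(\bbP V_i)$. The classical characterization $\Aut(\bbP V_i) = \PGL(V_i)$ gives $\psi_i \in \PGL(V_i)$, and assembling over $i$ through the tautological identification $(\pi_1 \vvirg \pi_k) = \id_X$ yields $\phi = (\psi_1 \vvirg \psi_k) \in \PGL(V_1) \ttimes \PGL(V_k)$, as desired. The only genuinely delicate step is identifying the nef cone and its extremal rays inside $\Pic(X)$; everything after that reduction is routine assembly.
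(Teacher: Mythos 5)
Your proof is correct and follows the same broad strategy as the paper's, namely analyzing the induced action of $\Aut(X)$ on $\Pic(X) \cong \bbZ^k$. The difference lies in how each argument pins down that $\phi^*$ permutes the generators $[\calO_i]$ and then descends to a product of $\PGL$'s. The paper argues directly with global sections: it notes $H^0(\calO(a_1 \vvirg a_k)) = 0$ if some $a_j < 0$, and a dimension count on $\dim H^0(\calO(a_1 \vvirg a_k))$ forces $f^* \calH_1 = \calH_j$ for some $j$ of minimal matching dimension, finishing by an induction on $k$ that is left somewhat implicit. You instead invoke the nef cone: $\phi^*$ preserves $\Pic(X)$, the nef cone, and the lattice structure, so it permutes the primitive generators of extremal rays, giving the permutation $\tau$ immediately without any minimality assumption or induction. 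The $H^0$ dimension count then enters only to force $\tau \in \frakS$. Your final assembly step — using that $\pi_i$ is the morphism attached to the complete linear system $|\calO_i|$, so $\phi^* \calO_i \cong \calO_i$ yields $\pi_i \circ \phi = \psi_i \circ \pi_i$ with $\psi_i \in \PGL(V_i)$ — is more explicit than the paper's appeal to induction and is the cleanest way to see the factorization. Both arguments are sound; yours trades a small amount of machinery (nefness preserved under pullback by automorphisms, permutation of extremal rays) for a more transparent global structure, while the paper stays closer to elementary dimension counts.
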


We can use \autoref{lemma: automorphisms of multiP} to determine the linear preserver of Segre-Veronese varieties. In the case $d_1 = \cdots = d_k = 1$, this recovers the result of \cite{westwick1967}.
\begin{proposition}\label{prop:seg_ver}
Let $X = \nu_{d_1 \vvirg d_k}(\bbP V_1 \ttimes \bbP V_k) \subseteq \bbP (S^{d_1} V_1 \ootimes S^{d_k}V_k)$ be the Segre-Veronese variety of rank one partially symmetric tensors of multidegree $(d_1 \vvirg d_k)$. Then
\[
G_X = G(V_1 \vvirg V_k) \rtimes \tilde{\frakS}
\]
where $G(V_1 \vvirg V_k)$ denotes the image of the group $\GL(V_1) \ttimes \GL(V_k)$ in $\GL(S^{d_1} V_1 \ootimes S^{d_k}V_k)$ and  $\tilde{\frakS} \subseteq \frakS \subseteq \frakS_k$ is the subgroup permuting factors of the same dimension and the same degree.
\end{proposition}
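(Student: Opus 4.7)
The plan is to prove $G_X = G(V_1, \dots, V_k) \rtimes \tilde{\frakS}$ via two inclusions, using the restriction map $\rho : G_X \to \Aut(X)$ and \autoref{lemma: automorphisms of multiP} to control its image.

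The easy inclusion $G(V_1, \dots, V_k) \rtimes \tilde{\frakS} \subseteq G_X$ is immediate: the natural action of $\GL(V_1) \ttimes \GL(V_k)$ sends the rank-one tensor $v_1^{d_1} \ootimes v_k^{d_k}$ to $(g_1 v_1)^{d_1} \ootimes (g_k v_k)^{d_k}$, which still lies in $X$; and by definition a permutation $\sigma \in \tilde{\frakS}$ swaps only factors $S^{d_i} V_i$ and $S^{d_j} V_j$ with $\dim V_i = \dim V_j$ and $d_i = d_j$, so the swap is a well-defined linear automorphism of the ambient space that manifestly preserves $X$.

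For the reverse inclusion, I would first note that $X$ is linearly non-degenerate in $\bbP(S^{d_1} V_1 \ootimes S^{d_k} V_k)$, since the $d_i$-th powers span $S^{d_i} V_i$; hence the kernel of $\rho$ is reduced to scalar matrices, which already lie in $G(V_1, \dots, V_k)$. Via the Segre-Veronese isomorphism $X \cong \bbP V_1 \ttimes \bbP V_k$, \autoref{lemma: automorphisms of multiP} gives $\Aut(X) = (\PGL(V_1) \ttimes \PGL(V_k)) \rtimes \frakS$. The $\PGL$-part of $\rho(G_X)$ is obviously all of $\PGL(V_1) \ttimes \PGL(V_k)$ (it is hit by $G(V_1, \dots, V_k)$), so the crux is to show that a permutation $\sigma \in \frakS$ in the image of $\rho$ is forced to lie in $\tilde{\frakS}$.

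This is the main step: assume $\sigma \in \frakS$ lifts to $g \in G_X$, and after multiplying by an appropriate element of $G(V_1, \dots, V_k)$ assume $\rho(g) = \sigma$. Fixing identifications $V_i \cong V_{\sigma^{-1}(i)}$ for the pairs with equal dimensions, the equality $\rho(g) = \sigma$ forces an identity
\[
g(v_1^{d_1} \ootimes v_k^{d_k}) = f(v_1, \dots, v_k) \cdot v_{\sigma^{-1}(1)}^{d_1} \ootimes v_{\sigma^{-1}(k)}^{d_k}
\]
for some polynomial function $f$ on $V_1 \ttimes V_k$. Rescaling $v_i \mapsto \lambda_i v_i$ on both sides and using linearity of $g$ shows $f$ must be multi-homogeneous of multidegree $(d_i - d_{\sigma(i)})_{i=1}^k$. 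Since $f$ is polynomial, every $d_i - d_{\sigma(i)} \geq 0$, and summing gives $\sum d_i \geq \sum d_{\sigma(i)} = \sum d_i$, forcing equality $d_i = d_{\sigma(i)}$ for all $i$. Hence $\sigma \in \tilde{\frakS}$, completing the proof. The main technical point I anticipate is verifying that $f$ really is a (well-defined) polynomial: this requires observing that $g(v_1^{d_1} \ootimes v_k^{d_k})$ is always proportional to the rank-one tensor $v_{\sigma^{-1}(1)}^{d_1} \ootimes v_{\sigma^{-1}(k)}^{d_k}$ (a consequence of the projective equality $\rho(g) = \sigma$), and that the scalar of proportionality extends polynomially across the locus where the denominator vanishes, which follows since $g$ applied to $v_1^{d_1} \ootimes v_k^{d_k}$ is itself a polynomial map of $(v_1, \dots, v_k)$.
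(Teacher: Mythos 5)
Your proof follows the same overall architecture as the paper — use the restriction map $G_X \to \Aut(X)$, invoke \autoref{lemma: automorphisms of multiP}, kill the $\PGL$-part by composing with an element of $G(V_1 \vvirg V_k)$, and reduce to showing that any permutation in the image lies in $\tilde{\frakS}$. The difference is in the final step. The paper observes that the preimage $\nu_{d_1 \vvirg d_k}^{-1}(L)$ of a hyperplane $L$ is a divisor of multidegree $(d_1 \vvirg d_k)$ on $\bbP V_1 \ttimes \bbP V_k$, while the preimage of $h\cdot L$ is simultaneously of multidegree $(d_1 \vvirg d_k)$ (it is a hyperplane pullback) and of the permuted multidegree, forcing these to match. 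You instead work on the cone: you write $g(v_1^{d_1}\ootimes v_k^{d_k}) = f(v_1\vvirg v_k)\cdot v_{\sigma^{-1}(1)}^{d_1}\ootimes v_{\sigma^{-1}(k)}^{d_k}$, extract from the scaling $v_i\mapsto \lambda_i v_i$ that $f$ is multi-homogeneous of multidegree $(d_i - d_{\sigma(i)})_i$, and conclude by nonnegativity of polynomial degrees plus $\sum(d_i - d_{\sigma(i)})=0$. This is a more concrete, coordinate-free affine computation that trades the Picard/divisor language for elementary homogeneity, and it works.

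One point you flag but do not fully justify: why $f$ is a polynomial. Your stated reason — that $(v_1\vvirg v_k)\mapsto g(v_1^{d_1}\ootimes v_k^{d_k})$ is polynomial — is not by itself enough; one can have $\Phi = f\cdot w$ with $\Phi, w$ polynomial and $f$ only rational (e.g.\ $\Phi=x$, $w=x^2$, $f=1/x$). What saves the argument here is that the vanishing locus of $w(v)=v_{\sigma^{-1}(1)}^{d_1}\ootimes v_{\sigma^{-1}(k)}^{d_k}$ is $\bigcup_j\{v_j=0\}$, which has codimension $\min_j\dim V_j\geq 2$ under the standing hypothesis $\dim V_i\geq 2$. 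Thus $f$ is a rational function that is regular outside a closed set of codimension $\geq 2$ in the smooth (hence normal) variety $V_1\ttimes V_k$, so by the algebraic Hartogs/extension theorem it is regular everywhere and therefore a polynomial. With that addition, the argument is complete. Note also that the standing hypothesis $\dim V_i\geq 2$ is genuinely needed for your route, just as it is needed for the paper's appeal to \autoref{lemma: automorphisms of multiP}.
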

\begin{proof}
There is a natural embedding of $ G(V_1 \vvirg V_k) \rtimes \tilde{\frakS}$ into $\GL(S^{d_1} V_1 \ootimes S^{d_k}V_k)$ whose image is clearly a subgroup of $G_X$.

The kernel of the natural map $G_X \to \Aut(\bbP V_1 \ttimes \bbP V_k)$ is the central subgroup $Z \subseteq \GL(S^{d_1} V_1 \ootimes S^{d_k}V_k)$ because the Segre-Veronese variety is linearly non-degenerate. This shows that $G_X^\circ = G(V_1 \vvirg V_k)$. Moreover, since $G(V_1 \vvirg V_k)$ surjects onto $\Aut^\circ(\bbP V_1 \ttimes \bbP V_k) = \PGL(V_1) \ttimes \PGL(V_k)$, by~\autoref{lemma: automorphisms of multiP} we have that any element $h \in G_X$ induces, up to composing it with an element of $G(V_1 \vvirg  V_k)$, a permutation in the subgroup $\frakS$ permuting the factors of $X$ of the same dimension.

We show that any such permutation is an element of $\tilde{\frakS}$. Let $h \in G_X$, let $L \subseteq S^{d_1} V_1 \ootimes S^{d_k}V_k$ be a hyperplane and let $h\cdot L$ be its image under the action of $h$. Then $\nu_{d_1 \vvirg d_k}^{-1}(L)$ and $\nu_{d_1 \vvirg d_k}^{-1}(h\cdot L)$ are two hypersurfaces in $\bbP V_1 \ttimes \bbP V_k$ of multidegree $(d_1 \vvirg d_k)$. However, since $h$ induces a permutation in $\frakS$, we have $\nu_{d_1 \vvirg d_k}^{-1}(h\cdot L)$ is a hypersurface of multidegree $(d_{h(1)} \vvirg d_{h(k)})$ as well. This shows that $h$ permutes factors of the same dimension and embedded in the same degree; in other words $h \in G(V_1 \vvirg V_k) \rtimes \tilde{\frakS}$ and this concludes the proof.
\end{proof}

\section{Techniques for computing the linear preserver}\label{sec:tech}
In this section we give an overview of several methods that we will use to compute $G_X$ when $V$ is a complex vector space, $G=\GL(V)$, and $X \subseteq \bbP V$ is a projective variety. The action of $\GL(V)$ on $V$ induces an action on the polynomial ring $\Sym V^*$ by $g \cdot f = f \circ g^{-1}$.

We record a straightforward result, which is however central for most of the techniques of this section.
\begin{lemma}\label{lemma: stabilize ideal}
Let $X \subseteq \bbP V$ be a variety and let $G_X \subseteq \GL(V)$ be its linear preserver. Then
\[
G_X=\{g \in \GL(V) : g\cdot I(X)=I(X)\}.
\]
More precisely, for every $d \geq 1$, $G_X \subseteq \{g \in \GL(V) : g\cdot I_d(X)=I_d(X)\}$ and equality holds for $d \geq d_0$, where $d_0$ is the largest degree of a generator of $I(X)$.
\end{lemma}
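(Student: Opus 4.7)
The plan is to prove both claims simultaneously by exploiting the fact that the action of $\GL(V)$ on $V$ extends to a degree-preserving linear action on the polynomial ring $\Sym V^*$ via $(g\cdot f)(x) = f(g^{-1}x)$. Both inclusions will then follow from this evaluation identity, applied in opposite directions.

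First I would verify the easy inclusion $G_X \subseteq \{g : g\cdot I_d(X) = I_d(X)\}$ for every $d \geq 1$. Given $g \in G_X$, any $f \in I_d(X)$, and any $x \in X$, the identity yields $(g\cdot f)(x) = f(g^{-1}x) = 0$, since $g^{-1} \in G_X$ and hence $g^{-1}x \in X$. So $g\cdot I_d(X) \subseteq I_d(X)$, and applying the same reasoning to $g^{-1}$ gives equality. Summing over $d$ also yields $g\cdot I(X) = I(X)$.

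For the reverse inclusion when $d \geq d_0$, the key observation is that $I_d(X)$ already cuts out $X$ set-theoretically in $\bbP V$. Indeed, fix homogeneous generators $f_1, \dots, f_N$ of $I(X)$ of degrees $e_i \leq d_0$. For each $i$, the subspace $\Sym^{d-e_i} V^* \cdot f_i$ is contained in $I_d(X)$; since the degree-$(d-e_i)$ piece $\Sym^{d-e_i} V^*$ has no common zero in $\bbP V$ (any nonzero $v \in V$ is separated from $0$ by some linear form), the projective vanishing locus of $\Sym^{d-e_i} V^* \cdot f_i$ equals $V(f_i)$. Intersecting over $i$ gives $V(I_d(X)) = V(I(X)) = X$. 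This is the only technical point of the proof, and it is entirely elementary.

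Given this, the argument closes quickly: if $g\cdot I_d(X) = I_d(X)$, then also $g^{-1}\cdot I_d(X) = I_d(X)$, and for any $x \in X$ and any $f \in I_d(X)$ we have $f(gx) = (g^{-1}\cdot f)(x) = 0$; hence $gx \in V(I_d(X)) = X$, so $g \in G_X$. Finally, the first equality $G_X = \{g : g\cdot I(X) = I(X)\}$ follows by combining both inclusions: the condition $g\cdot I(X) = I(X)$ forces $g\cdot I_d(X) = I_d(X)$ for every $d$ (since the action preserves degree), and applying the previous step at any single $d \geq d_0$ places $g$ in $G_X$. I do not anticipate any serious obstacle; the only delicate step is the set-theoretic vanishing statement at degrees $d \geq d_0$, and this is a straightforward consequence of the generator degree bound.
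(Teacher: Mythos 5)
Your proposal is correct and follows essentially the same route as the paper: both directions hinge on the evaluation identity $(g\cdot f)(x)=f(g^{-1}x)$, the forward inclusion by noting $g\cdot f$ vanishes on $g\cdot X=X$, and the reverse inclusion for $d\geq d_0$ by showing $g$ maps $X$ into $V(I_d(X))=X$. The one thing you add beyond the paper's argument is an explicit justification that $I_d(X)$ cuts out $X$ set-theoretically once $d\geq d_0$ (via the observation that $\Sym^{d-e_i}V^*$ has no common projective zero), which the paper simply asserts.
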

\begin{proof}
It suffices to prove that
\[
G_X \subseteq \{g \in \GL(V) : g\cdot I_d(X)=I_d(X)\}
\]
for all $d \in \bbN$, and that
\[
G_X \supseteq \{g \in \GL(V) : g\cdot I_d(X)=I_d(X)\}
\]
for all $d\geq d_0$.

For the first containment, if $g \in G_X$ and $f \in I_d(X)$, then $g \cdot f = f \circ g^{-1}$ vanishes on $g\cdot X = X$; therefore $g \cdot f \in I_d(X)$. For the second containment, if $g \cdot I_d(X)=I_d(X)$ for $d\geq d_0$, then $g^{-1} \cdot I_d(X)=I_d(X)$, so $f(g(x))=0$ for all $x \in X$ and $f \in I_d(X)$. Since the equations of $I_d(X)$ define $X$, it follows that $g \in G_X$.
\end{proof}

The upshot of \autoref{lemma: stabilize ideal} is that the linear preserver of any variety coincides with the linear preserver of some auxiliary linear space, namely any homogeneous component $I_d(X) \subseteq S^d V^*$ for $d$ at least the largest degree of a generator of $I(X)$.

\begin{remark}
The proof of the second containment in \autoref{lemma: stabilize ideal} shows that $G_{I_d} \subseteq G_X$ if $I_d \subseteq S^d V^*$ is a set of equations defining $X$. For the reverse inclusion, one needs $I_d = I_d(X)$ to be the full component of degree $d$ of $I(X)$.

We show with an example that the latter condition is necessary. Let $X = \nu_{1,1}(\bbP V_1 \times \bbP V_2)$ be the variety of rank one matrices. The ideal $I(X)$ is generated by the $2 \times 2$ minors. Let $J_3 \subseteq I_3(X)$ be a generic codimension $1$ linear space and let $J$ be the ideal that it generates. It is easy to see $J$ is a set of equations defining $X$; in fact the saturation of $J$ is $I(X)$. However, a direct calculation shows $G_{J_3}^\circ \subsetneq G_{I_3(X)}^\circ = G(V_1,V_2)$. By semicontinuity, both the saturation statement and the statement regarding the linear preserver can be proved for a specific codimension $1$ linear space. For instance, consider $J_3$ to be generated by all polynomials of the form $x_{i_0j_0}(x_{i_1j_1}x_{i_2j_2} - x_{i_1j_2}x_{i_2j_1})$ except for $x_{11}(x_{22}x_{33} - x_{23}x_{32})$. In this case, the calculation can be performed explicitly.
\end{remark}

\subsection{Lie algebra approach}\label{lie_algebra}
The computation of $G_X$ is often split in two parts: determining the identity component $G_X^\circ$, using standard linear algebra as explained in this section, and then determining $G_X$ using more advanced geometric methods and the fact that it is a finite extension of $G_X^\circ$. In fact, sometimes one can show $G_X = N_{\GL(V)} (G_X^{\circ})$ is the full normalizer of $G_X^\circ$ in $\GL(V)$, which in turn uniquely determines the linear preserver.

Determining $G_X^\circ$ can be reduced to standard linear algebra using \autoref{lemma: stabilize ideal}. Let $\frakgl(V)$ be the Lie algebra of $\GL(V)$. The action of $\GL(V)$ on the spaces $S^d V^*$ induces a Lie algebra action of $\frakgl(V)$, defined by Leibniz rule: in coordinates, for $A = (a_{ij}) \in \frakgl(V)$, and $f \in S^d V^*$, we have
\[
A \cdot f = - \sum_{i,j=1}^n a_{ij} x_j \frac{\partial f}{\partial x_i}.
\]
Let $I(X)$ be generated in degree at most $d$, so that $G_X = G_{I_d(X)}$. Let $\frakg_X \subseteq \frakgl(V)$ be the Lie algebra of $G_X$. Then $\frakg_X$ uniquely determines $G_X^\circ$: in other words, $G_X^\circ$ is the only connected algebraic subgroup of $\GL(V)$ whose Lie algebra is $\frakg_X$.

The Lie algebra $\frakg_X$ is uniquely determined by the condition
\[
\frakg_X=\{A \in \frakgl(V) : A \cdot I_d(X) \subseteq I_d(X)\}.
\]
This follows from the fact that the Lie algebra of the preserver of a linear subspace is equal to the set of endomorphisms preserving that subspace, see e.g.~\cite[Section 13.2]{humphreys2012linear}. In particular, since the Lie algebra action is linear, $\frakg_X$ is identified as the solution set of a linear system. To obtain generators for $G_X^{\circ}$, one can \emph{integrate} the Lie algebra $\frakg_X$, see e.g. \cite[Section 1.3]{Pro:LieGroups}. Usually this is not straightforward and in practice it is more convenient to verify that a natural guess of $G_X^\circ$ has the Lie algebra $\frakg_X$.

Once $G_X^\circ$ is determined, one can hope to verify that the normalizer $N_{\GL(V)} (G_X^{\circ})$ also preserves $X$, in which case one obtains $G_X = N_{\GL(V)} (G_X^{\circ})$. If this is not the case, the task of computing $G_X$ is much more challenging. Some methods are outlined in \cite{Kayal:Affine_proj_poly,GarGur:simple_poly_stab}.

In the case of secant varieties of Segre-Veronese varieties, this last step is immediate if $G_X^\circ = G(V_1 \vvirg V_k)$ is the expected identity component. Indeed, we have the following result, which is a more general version of \cite[Lemma 2.4]{Ges:Geometry_of_IMM}.
\begin{proposition}\label{prop: normalizer}
    Let $V_1 \vvirg V_k$ be vector spaces, let $d_1 \vvirg d_k$ be nonnegative integers and let $H \subseteq \GL(S^{d_1} V_1 \ootimes S^{d_k}V_k)$ be a closed algebraic subgroup such that $G(V_1 \vvirg V_k) \rtimes \tilde{\frakS} \subseteq H$, where $\tilde{\frakS}$ is the subgroup of $\frakS_k$ permuting factors of the same dimension and the same degree. If $H^\circ = G(V_1 \vvirg V_k)$ then $H = G(V_1 \vvirg V_k) \rtimes \tilde{\frakS}$.
\end{proposition}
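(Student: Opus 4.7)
The plan is to show that every coset of $H^\circ$ in $H$ contains a representative from $\tilde{\frakS}$; combined with the hypothesis $G(V_1 \vvirg V_k) \rtimes \tilde{\frakS} \subseteq H$, this yields the proposition. Since $H^\circ$ is normal in $H$, conjugation by any $h \in H$ defines an algebraic automorphism $\phi_h$ of $H^\circ = G(V_1 \vvirg V_k)$; the strategy is to analyze $\phi_h$ modulo $\Inn(H^\circ)$ and then apply Schur's Lemma.

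First I would record the structure of $\Aut(H^\circ)$. Every automorphism of $H^\circ$ descends to an automorphism of the adjoint quotient $\PGL(V_1) \ttimes \PGL(V_k)$, and by the classical classification of automorphisms of products of simple algebraic groups (see, e.g., \cite{Borel:LinAlgGp}), such an automorphism is, modulo inner automorphisms, a composition of (a) a permutation of the factors of equal dimension and (b) Dynkin involutions $g \mapsto (g^T)^{-1}$ on individual $\PGL(V_i)$ with $\dim V_i \geq 3$. Thus $\phi_h$, modulo $\Inn(H^\circ)$, lies in a finite group generated by these operations.

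Next I would exploit the intertwining relation $h\, \rho(g)\, h^{-1} = \rho(\phi_h(g))$ for all $g \in H^\circ$, where $\rho$ denotes the tautological inclusion $H^\circ \hookrightarrow \GL(W)$ with $W = S^{d_1} V_1 \ootimes S^{d_k} V_k$. This relation implies that $\rho$ and $\rho \circ \phi_h$ are isomorphic $H^\circ$-representations, hence have the same highest weight as $\prod_i \GL(V_i)$-modules. A Dynkin involution on factor $i$ would replace $S^{d_i} V_i$ by $S^{d_i} V_i^*$ in the tensor product, and these are non-isomorphic as $\GL(V_i)$-modules whenever $\dim V_i \geq 2$ and $d_i \geq 1$; therefore no Dynkin involution can appear. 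Similarly, any permutation factor of $\phi_h$ must send the $i$-th tensor slot to one of both the same dimension and the same degree, forcing it to lie in $\tilde{\frakS}$. The degenerate cases where some $d_i = 0$ or $\dim V_i = 1$ can be handled by reducing to a product with fewer nontrivial factors.

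Finally, having factored $\phi_h = \Inn(h_0) \circ \phi_\sigma$ for some $h_0 \in H^\circ$ and $\sigma \in \tilde{\frakS} \subseteq H$, I would set $h' := h_0^{-1}\, h\, \sigma^{-1}$; then $h'$ centralizes $H^\circ$. Since $W$ is irreducible as an $H^\circ$-module, Schur's Lemma forces $h'$ to act as a scalar on $W$, and every such scalar is already in $H^\circ$ (it is the image of $(\lambda \id_{V_1}, \id_{V_2}, \dots, \id_{V_k})$ for a suitable $\lambda$, provided some $d_i > 0$; the case $W = \bbC$ is immediate). Hence $h \in H^\circ \cdot \tilde{\frakS}$, completing the proof. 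The principal obstacle I anticipate is the structural step: giving a clean, uniform justification of the automorphism description of $G(V_1 \vvirg V_k)$, since it is neither $\GL(V_1) \ttimes \GL(V_k)$ nor the adjoint group but a nontrivial central quotient. Once this is in place, the weight-theoretic elimination of Dynkin involutions and non-degree-preserving permutations, as well as the final Schur step, are routine.
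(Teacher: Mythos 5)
Your proposal is correct and follows the same overall blueprint as the paper's proof: both pass from $H$ to its image in $\Out(H^\circ)$ via conjugation, use the representation $W = S^{d_1}V_1 \ootimes S^{d_k}V_k$ to constrain that image, and use Schur's Lemma to handle the centralizer. The one substantive difference is in how the constraint on outer automorphisms is obtained: the paper invokes \cite[Prop.\ 2.2, Cor.\ 2.4]{BeGaLa:Linear_preservers} to identify the image in $\Out$ with the stabilizer of the marked Dynkin diagram of $W$, whereas you replace this citation with a direct representation-theoretic computation, comparing $\rho$ and $\rho\circ\phi_h$ as $H^\circ$-modules and observing that a Dynkin involution on the $i$-th factor would pull $S^{d_i}V_i$ back to the non-isomorphic module $S^{d_i}V_i^*$, and that any surviving permutation must preserve both dimension and degree. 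That is a genuine improvement in self-containment. The concern you flag at the end is real and is the one place your write-up is currently incomplete: $\Out$ of a connected reductive group need not inject into $\Out$ of its adjoint quotient (already for $\bbG_m$ the former is $\bbZ/2$ and the latter trivial), so knowing that $\phi_h$ is, modulo $\Inn$, a permutation on $\prod\PGL(V_i)$ does not by itself yield a factorization $\phi_h = \Inn(h_0)\circ\phi_\sigma$ on $H^\circ$. One clean way to close the gap, consistent with the rest of your argument, is: after composing with $\Inn(h_0)^{-1}$ and $\phi_\sigma^{-1}$ you are left with $h' := h_0^{-1}h\sigma^{-1}$ whose conjugation action is trivial on $H^\circ/Z(H^\circ)$; then $h'gh'^{-1} = \chi(g)\,g$ for a scalar-valued character $\chi$ of $H^\circ$ (using irreducibility of $W$ so that $Z(H^\circ)$ acts by scalars), and $\chi$ is trivial because it kills the derived group and, being of finite order on a connected group, is constant. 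This recovers that $h'$ centralizes $H^\circ$ and your Schur step finishes as written. Alternatively, one can appeal directly to the based-root-datum description of automorphisms of connected reductive groups, which is the abstract form of what the paper uses through \cite[Prop.\ D40]{fulton2013representation}.
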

\begin{proof}
Write $G = \GL(S^{d_1} V_1 \ootimes S^{d_k}V_k)$. Since the identity component $H^\circ$ coincides with $G(V_1 \vvirg V_k)$, we have $H \subseteq N_{G}(G(V_1 \vvirg V_k))$. Hence, conjugation in $H$ defines a map
\[
\phi: H \to \Aut(G(V_1 \vvirg V_k))
\]
whose kernel is the centralizer $C_H(G(V_1 \vvirg V_k))$ of $G(V_1 \vvirg V_k)$ in $H$. Since $S^{d_1} V_1 \ootimes S^{d_k}V_k$ is an irreducible representation for $G(V_1 \vvirg V_k)$, a consequence of Schur's Lemma \cite[Lemma 1.7]{fulton2013representation} is that the centralizer $C_{G}(G(V_1 \vvirg V_k))$ coincides with the center $Z(G) = \{ \lambda \Id : \lambda \in \bbC^\times\}$; in particular $C_H(G(V_1 \vvirg V_k)) \subseteq G(V_1 \vvirg V_k)$.

Consider the composition  
\[
H \xto{\phi} \Aut(G(V_1 \vvirg V_k)) \to \Out(G(V_1 \vvirg V_k)),
\]
where $\Out(G(V_1 \vvirg V_k)) = \Aut(G(V_1 \vvirg V_k)) / \Inn ( G(V_1 \vvirg V_k))$ is the group of outer automorphisms and $\Inn(G(V_1 \vvirg V_k)) = G(V_1 \vvirg V_k) / Z(G)$ is the group of inner automorphisms. The kernel of this composition is the preimage via $\phi$ of the subgroup $\Inn(G(V_1 \vvirg V_k))$, that is the subgroup of $H$ generated by $G(V_1 \vvirg V_k)$ and $\ker (\phi)$. Since $\ker \phi = C_H(G(V_1 \vvirg V_k))$ is contained in $G(V_1 \vvirg V_k)$, we have that the kernel of the composition is $G(V_1 \vvirg V_k)$. Therefore, one obtains a injective map 
\[
\iota: H/ (G(V_1 \vvirg V_k)) \to \Out(G(V_1 \vvirg V_k)).
\]
There is a natural correspondence between $\Out(G(V_1 \vvirg V_k))$ and the symmetries of the Dynkin diagram of $G(V_1 \vvirg V_k)$, see \cite[Prop. D40]{fulton2013representation}. Under this correspondence, \cite[Prop. 2.2, Cor. 2.4]{BeGaLa:Linear_preservers} implies that the map $\iota$ surjects onto the subgroup of those symmetries preserving the marked Dynkin diagram of the representation $S^{d_1} V_1 \ootimes S^{d_k}V_k$. This is exactly the subgroup $\tilde{\frakS}$.

This concludes the proof, because it shows that $H$ has the same number of irreducible components as $G(V_1 \vvirg V_k) \rtimes \tilde{\frakS}$, hence the two groups coincide.
\end{proof}

\subsection{Prolongation and partial derivatives}\label{sec:tech_prolong}

Applying \autoref{lemma: stabilize ideal} in order to determine $G_X$ relies on knowing the ideal $I(X)$, which in general is far from understood. However, the condition $G_X \subseteq G_{I_d(X)}$ for some specific, low degree $d$ is sometimes sufficient to determine $G_X$: indeed, even if $I(X)$ is not generated in degree (at most) $d$, one can often prove directly $G_{I_d(X)}\subseteq G_X$, which in turn yields the equality. A further reduction is obtained by applying the following straightforward result. For an element $\Delta \in S^\delta V^*$ and an element $f \in S^d V$, let $\Delta \contract f \in S^{d-\delta} V$ denote the contraction of $f$ against $\Delta$. We use the same notation for subspaces $E \subseteq  S^\delta V^*$ and $F \subseteq S^d V$: $E \contract F$ is the image of $E \otimes F$ under the contraction map, that is the span of all possible contractions of an elemenet of $F$ against an element of $E$.
\begin{lemma}\label{lemma: derivatives}
 Let $d,d'$ be nonnegative integers with $d' \leq d$. Let $E \subseteq S^d V^*$ and let $E' = S^{d'} V \contract E \subseteq S^{d-d'} V$ be the linear span of all partials of order $d'$ of the elements of $E$. Then $G_E \subseteq G_{E'}$.
\end{lemma}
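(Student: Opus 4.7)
The plan is to derive the lemma as a formal consequence of the $\GL(V)$-equivariance of the contraction pairing $\contract : S^{d'}V \otimes S^d V^* \to S^{d-d'}V^*$. The required identity is that for every $g \in \GL(V)$, $v \in S^{d'}V$, and $f \in S^d V^*$,
$$g \cdot (v \contract f) = (g\cdot v) \contract (g\cdot f),$$
where $g$ acts on $V^*$ via the contragredient representation $(g\cdot \xi)(u) = \xi(g^{-1}u)$ and on all symmetric powers by the induced action. I would verify this identity on pure symmetric tensors, where it reduces to the defining relation $(g\cdot \xi)(g\cdot u) = \xi(u)$ together with multilinearity and symmetrization.

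Once equivariance is in hand, the proof is essentially one line. For $g \in G_E$ one has $g\cdot E = E$ by definition, while $g \cdot S^{d'}V = S^{d'}V$ trivially, since $S^{d'}V$ is a $\GL(V)$-subrepresentation of $V^{\otimes d'}$. Hence
$$g \cdot E' = g \cdot \bigl(S^{d'}V \contract E\bigr) = (g\cdot S^{d'}V) \contract (g\cdot E) = S^{d'}V \contract E = E',$$
which exhibits $g \in G_{E'}$. Invertibility of $g$ combined with finite-dimensionality of $E'$ ensures that the set-theoretic inclusion $g\cdot E' \subseteq E'$ automatically upgrades to an equality of subspaces, so no separate dimension count is needed.

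I do not foresee any substantive obstacle; the content is purely formal once the equivariance of contraction is recorded, and the latter is standard (indeed it is the same principle that underlies the Lie-algebra action by partials described in \autoref{lie_algebra}). The only minor care needed is to fix compatible conventions for the actions on $V^*$ and $\Sym V^*$ so that contraction intertwines them without an inverse twist; beyond this bookkeeping, the argument is immediate.
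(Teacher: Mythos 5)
Your proposal is correct and is essentially identical to the paper's own proof: both reduce the claim to the $\GL(V)$-equivariance of the contraction pairing and then run the same one-line computation $g\cdot E' = (g\cdot S^{d'}V)\contract(g\cdot E) = S^{d'}V \contract E = E'$. The extra remarks you add (verifying equivariance on pure tensors, and the observation that invertibility plus finite-dimensionality upgrades $g\cdot E' \subseteq E'$ to equality) are sound but not points the paper felt the need to spell out.
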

\begin{proof}
The proof is immediate: if $g \in G_E$, then by definition $g \cdot E = E$; moreover $g \cdot S^{d'} V = S^{d'}V$ simply because $g$ acts linearly on $S^{d'} V$. Therefore
\[
g \cdot E' = g \cdot (S^{d'} V \contract E) = (g \cdot S^{d'} V) \contract (g \cdot E) = S^{d'} V \contract E = E',
\]
where we use in the second equality that the contraction is $\GL(V)$-equivariant. This shows $g \in G_{E'}$.
\end{proof}
Applying \autoref{lemma: derivatives} to the case where $E = I_d(X)$ is the degree $d$ component of the ideal of a variety, we have $G_X \subseteq G_{I_d(X)} \subseteq G_{ S^{d'} V \contract I_d(X)}$.

This method is particularly useful in the setting of secant varieties. To this end, we recall a strong characterization of the low degree components of the ideal of a secant $\sigma_r(Y)$ in terms of the \emph{prolongation} of the ideal of $Y$, following \cite{sidman2009prolongations}: if $A \subseteq S^d V^*$ is a linear space and $p \geq 1$, the $p$-th prolongation of $A$ is defined by
\[
A^{(p)} = \{ f \in S^{d + p} V^* : S^p V \contract f \subseteq A\};
\]
in other words $A^{(p)}$ is the space of forms whose $p$-th partial derivatives belong to $A$. Equivalently $A^{(p)} = (A \otimes S^{p} V^*) \cap S^{d+p} V^*$.
\begin{proposition}[{\cite[Theorem 1.2]{sidman2009prolongations}}]\label{prop: prolongation}
Let $Y \subseteq \bbP V$ be a variety with $I_{d-1}(Y) = 0$. Then $I_{r(d-1) +1}(\sigma_r(Y)) = I_{d}(Y)^{((r-1)(d-1))}$.
\end{proposition}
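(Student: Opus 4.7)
The plan is to prove the two inclusions $I_d(Y)^{((r-1)(d-1))} \subseteq I_{r(d-1)+1}(\sigma_r(Y))$ and $I_{r(d-1)+1}(\sigma_r(Y)) \subseteq I_d(Y)^{((r-1)(d-1))}$ separately.

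For $\supseteq$, take $f \in I_d(Y)^{((r-1)(d-1))}$ and a point $y_1 + \cdots + y_r$ with each $y_i$ in the affine cone over $Y$. By polarization, $f(y_1 + \cdots + y_r)$ expands as a sum over multi-indices $(\alpha_1 \vvirg \alpha_r)$ with $\sum \alpha_i = r(d-1)+1$ of multilinear values $\tilde f(y_1^{\alpha_1} \vvirg y_r^{\alpha_r})$. Pigeonhole forces some $\alpha_i \geq d$ in every term. For such an index $i$, regrouping $y_i^{\alpha_i - d}$ with the remaining $y_j^{\alpha_j}$ ($j \neq i$) into a single element $w \in S^{(r-1)(d-1)} V$ rewrites the term, up to a nonzero constant, as $(w \contract f)(y_i)$; this vanishes because $w \contract f \in I_d(Y)$ by the prolongation hypothesis and $y_i$ lies in $Y$. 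Hence $f$ vanishes on $\sigma_r^\circ(Y)$, and by continuity on $\sigma_r(Y)$.

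For $\subseteq$, I would induct on $r$, with the trivial base $r=1$ using $I_d(\sigma_1(Y)) = I_d(Y) = I_d(Y)^{(0)}$. For the inductive step, let $f \in I_{r(d-1)+1}(\sigma_r(Y))$. Fix $y \in Y$ and $x \in \sigma_{r-1}(Y)$: the affine line $x + ty$ lies in $\sigma_r(Y)$ for every $t \in \bbC$, so the polynomial $t \mapsto f(x+ty)$ vanishes identically in $t$. Reading off Taylor coefficients gives $(y^k \contract f)(x) = 0$ for every $k$; in particular $y^{d-1} \contract f \in I_{(r-1)(d-1)+1}(\sigma_{r-1}(Y))$. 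By the inductive hypothesis, this last space equals $I_d(Y)^{((r-2)(d-1))}$, so $(y^{d-1} \cdot w') \contract f \in I_d(Y)$ for every $y \in Y$ and every $w' \in S^{(r-2)(d-1)} V$.

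The crux is to promote this partial information to the full condition $w \contract f \in I_d(Y)$ for every $w \in S^{(r-1)(d-1)} V$, and here the hypothesis $I_{d-1}(Y) = 0$ is essential: under the natural nondegenerate pairing $S^{d-1} V \times S^{d-1} V^* \to \bbC$, the linear span of $\{y^{d-1} : y \in Y\}$ is precisely the annihilator of $I_{d-1}(Y)$, and vanishing of the latter yields all of $S^{d-1} V$. Since multiplication $S^{d-1} V \otimes S^{(r-2)(d-1)} V \to S^{(r-1)(d-1)} V$ is surjective, the products $y^{d-1} \cdot w'$ span $S^{(r-1)(d-1)} V$, completing the induction. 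I anticipate this spanning step, together with the clean use of the hypothesis on $I_{d-1}(Y)$, to be the main obstacle; the polarization expansion in the easy direction and the Taylor-coefficient argument in the inductive step are routine once set up, but the conceptual core of the result is the reduction of a higher-order prolongation condition to a first-order derivative together with the nondegeneracy of $Y$ in degree $d-1$, which is also what makes the hypothesis sharp.
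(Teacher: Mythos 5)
The paper does not prove this proposition; it cites it directly from Sidman--Sullivant \cite[Theorem 1.2]{sidman2009prolongations}, so there is no in-paper proof to compare your argument against. Your self-contained proof is correct and its structure (the easy direction via polarization/pigeonhole, the hard direction via induction on $r$ using the line $x+ty$ and the Taylor-coefficient observation, and then closing the induction by noting that $I_{d-1}(Y)=0$ makes $\{y^{d-1}: y\in Y\}$ span $S^{d-1}V$ so that the products $y^{d-1}w'$ span $S^{(r-1)(d-1)}V$) is the natural one, and essentially the argument of the cited reference.

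Two small remarks for completeness. First, in the forward inclusion one should note that any element of $\sigma_r^\circ(Y)$ in the cone can be written as a sum of \emph{exactly} $r$ elements of $\hat Y$ (padding with $0\in\hat Y$ if necessary), so the polarization expansion applies uniformly. Second, in the inductive step it is cleanest to take $x$ in the dense open $\sigma_{r-1}^\circ(Y)$, where the containment $x+ty\in\sigma_r(Y)$ is immediate; vanishing of $y^{d-1}\contract f$ on this dense subset then gives vanishing on all of $\sigma_{r-1}(Y)$ by Zariski closure. Neither is a gap, just places where the writeup should be explicit.
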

We use this method to prove our main result,~\autoref{thm:main}, in~\autoref{sec:secant}.

\subsection{Singular locus}\label{sec:sing}
One more useful method to determine $G_X$ consists in determining $G_Z$ for auxiliary varieties $Z$ obtained from $X$ and satisfying the property that $G_X \subseteq G_Z$. One such variety is the singular locus $\Sing(X)\subseteq X$.
\begin{lemma}\label{lem:sing}
 Let $X \subseteq \bbP V$ be a variety. Then
 \[
  G_X \subseteq  G_{\Sing(X)}.
 \]
\end{lemma}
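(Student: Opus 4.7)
The plan is to show that any $g \in G_X$ restricts to a (projective) automorphism of $X$ and then to invoke the intrinsic nature of the singular locus under automorphisms.

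First, I would observe that since $G_X$ is by definition a subgroup of $\GL(V)$, both $g$ and $g^{-1}$ lie in $G_X$, so $g\cdot X \subseteq X$ and $g^{-1}\cdot X \subseteq X$; hence $g\cdot X = X$. The action of $g$ on $\bbP V$ is a regular isomorphism with regular inverse, so its restriction $\phi := g|_X \colon X \to X$ is an element of $\Aut(X)$.

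Next, I would use the intrinsic characterization of smoothness: a point $x \in X$ is smooth precisely when $\dim T_x X = \dim X + 1$ (using the affine convention of the paper), and singular otherwise. Since $\phi$ is an automorphism of $X$, its differential $d\phi_x \colon T_x X \to T_{\phi(x)} X$ is a linear isomorphism of Zariski tangent spaces, so $\dim T_x X = \dim T_{\phi(x)} X$; and $\phi$ permutes the irreducible components of $X$, preserving the local dimension. Consequently, $x \in \Sing(X)$ if and only if $\phi(x) \in \Sing(X)$, which gives $g\cdot \Sing(X) = \Sing(X)$. Thus $g \in G_{\Sing(X)}$, proving the inclusion.

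There is really no serious obstacle here: the result is essentially a formal consequence of the fact that $\Sing(X)$ is defined intrinsically and is therefore stable under $\Aut(X)$, together with the map $G_X \to \Aut(X)$ recalled in \autoref{sec:prelim}. The only subtlety worth mentioning explicitly is that one must use $g^{-1} \in G_X$ (i.e.\ the group structure of $G_X$) to promote the inclusion $g\cdot X \subseteq X$ to the equality $g\cdot X = X$ needed for $\phi$ to be an automorphism rather than merely a morphism into $X$.
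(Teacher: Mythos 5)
Your argument is correct and is essentially the paper's: the key point in both is that a linear $g \in G_X$ carries $T_xX$ isomorphically onto $T_{gx}X$, so tangent-space dimensions (and hence the singular locus) are preserved. The paper states this in one line via $T_{gx}X = g\cdot T_xX$, while you add the (harmless, slightly more careful) remarks about $g\cdot X = X$, the induced automorphism, and preservation of local dimension.
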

\begin{proof}
The action of $G_X$ on $\bbP V$ preserves the dimension of the affine tangent spaces. Indeed, if $g \in G_X$ and $x \in X$, then $T_{g x} X=g \cdot T_x X$, regarding $T_x X$ as a linear subspace of $V$.
\end{proof}

There are situations where one can determine $G_{\Sing(X)}$ and prove directly the inclusion $G_{\Sing(X)} \subseteq G_X$, which in turn yields equality. For example, we can use the singular locus method to give a short and simple proof that secant varieties of two-factor Segre varieties are as expected until they fill the space. This recovers the result of \cite[Section 3]{GUTERMAN200061}.
\begin{proposition}\label{prop:bipartite}
Let $X=\sigma_r(\bbP V_1 \times \bbP V_2)$ with $r < \min\{\dim V_1,\dim V_2\}$. Then $G_X=G(V_1,V_2)\rtimes \frakS$, where $\frakS=\frakS_2$ if $\dim V_1=\dim V_2$ and is the trivial group otherwise.
\end{proposition}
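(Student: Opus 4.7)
The plan is to identify $X = \sigma_r(\bbP V_1 \times \bbP V_2)$ with the projective variety of matrices in $V_1 \otimes V_2$ of rank at most $r$, and then apply the singular locus method (\autoref{lem:sing}) iteratively to reduce to the case $r = 1$, which is the Segre variety itself and is handled by \autoref{prop:seg_ver}.

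The easy containment $G(V_1,V_2) \rtimes \frakS \subseteq G_X$ is straightforward: the natural action of $\GL(V_1) \times \GL(V_2)$ on $V_1 \otimes V_2$ preserves the rank of matrices, and when $\dim V_1 = \dim V_2$ the transposition $\frakS = \frakS_2$ also preserves rank. Hence this group acts on the set of matrices of rank $\leq r$, and therefore on $\sigma_r(\bbP V_1 \times \bbP V_2)$.

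For the reverse containment, the key geometric input is the classical fact that for $r < \min\{\dim V_1, \dim V_2\}$,
\[
\Sing(\sigma_r(\bbP V_1 \times \bbP V_2)) = \sigma_{r-1}(\bbP V_1 \times \bbP V_2),
\]
which follows from the Jacobian criterion applied to the ideal of $(r+1)$-minors of a generic matrix. Combining this with \autoref{lem:sing}, I obtain a descending chain
\[
G_{\sigma_r(Y)} \subseteq G_{\sigma_{r-1}(Y)} \subseteq \cdots \subseteq G_{\sigma_1(Y)} = G_Y,
\]
where $Y = \bbP V_1 \times \bbP V_2$ denotes the Segre variety. Since $r \geq 1$, after finitely many steps the chain terminates at $G_Y$, which is $G(V_1, V_2) \rtimes \frakS$ by \autoref{prop:seg_ver} (taking $d_1 = d_2 = 1$, so that $\tilde{\frakS} = \frakS$). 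Combining with the easy containment gives equality.

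The only non-routine input is the singular locus computation, which is the standard determinantal result; the hypothesis $r < \min\{\dim V_1, \dim V_2\}$ is exactly what guarantees that $\sigma_r(Y) \subsetneq \bbP(V_1 \otimes V_2)$ (so the ambient space is not filled and the singular locus description is non-trivial). No other obstacle is expected, since the rest of the argument is a short application of \autoref{lem:sing} together with \autoref{prop:seg_ver}.
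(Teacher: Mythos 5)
Your proposal is correct and follows essentially the same route as the paper: both prove the containment $G_X \subseteq G_Y$ by iterating the singular-locus method (\autoref{lem:sing}) using the determinantal fact $\Sing(\sigma_r(\bbP V_1\times\bbP V_2)) = \sigma_{r-1}(\bbP V_1\times\bbP V_2)$, and then invoke \autoref{prop:seg_ver} at the base case $r=1$. The paper phrases the descent as an explicit induction on $r$ and cites a reference rather than the Jacobian criterion, but the argument is the same.
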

\begin{proof}
When $r=1$, the statement is true by \autoref{prop:seg_ver}. The general result follows by induction on $r$, using the fact that
\[
\Sing( \sigma_r(\bbP V_1 \times \bbP V_2)) = \sigma_{r-1} ( \bbP V_1 \times \bbP V_2);
\]
this is easy to verify directly, and we refer to \cite[Section II.2]{ArCoGrHa:Vol1} for a complete proof. By \autoref{lem:sing}, we obtain $G_X \subseteq G_{\bbP V_1 \times \bbP V_2} = G(V_1 ,V_2) \rtimes \frakS$. On the other hand, it is clear that $G(V_1 ,V_2) \rtimes \frakS \subseteq G_X$, which concludes the proof.
\end{proof}
The same proof applies to secant varieties of quadratic Veronese embeddings, that is varieties of symmetric matrices of bounded rank.
\begin{proposition}\label{prop:bipartite_symmetric}
Let $X=\sigma_r(\nu_2(\bbP V_1))$ with $r < \dim V_1$. Then $G_X=G(V_1)$.
\end{proposition}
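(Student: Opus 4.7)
The plan is to follow verbatim the inductive argument used for Proposition \ref{prop:bipartite}, with the Segre variety replaced by the quadratic Veronese variety and the corresponding singular-locus description. Set $Y = \nu_2(\bbP V_1)$, so that $X = \sigma_r(Y) \subseteq \bbP(S^2 V_1)$ is the projectivization of the cone of symmetric bilinear forms on $V_1^*$ of rank at most $r$; the hypothesis $r < \dim V_1$ is precisely what prevents $X$ from filling the ambient space.

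The base case $r = 1$ is an instance of \autoref{prop:seg_ver}: for the single-factor Segre--Veronese variety $\nu_2(\bbP V_1)$, the group $\tilde{\frakS}$ is trivial, so $G_Y = G(V_1)$.

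For the inductive step, I would invoke the classical fact that the singular locus of the variety of symmetric matrices of rank at most $r$ is the variety of symmetric matrices of rank at most $r-1$, i.e.
\[
\Sing(\sigma_r(\nu_2(\bbP V_1))) = \sigma_{r-1}(\nu_2(\bbP V_1)).
\]
This follows from a direct computation of the affine tangent space at a symmetric matrix of rank exactly $r$, or from \cite{ArCoGrHa:Vol1}. Combining this identification with \autoref{lem:sing} and the inductive hypothesis gives
\[
G_X \subseteq G_{\Sing(X)} = G_{\sigma_{r-1}(\nu_2(\bbP V_1))} = G(V_1).
\]
The reverse containment $G(V_1) \subseteq G_X$ is clear, since the action of $\GL(V_1)$ on $S^2 V_1$ by congruence preserves the rank of a symmetric tensor. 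The only point requiring external input is the description of the singular locus; this is standard and not a genuine obstacle, so the proof is essentially a symmetric-matrix mirror of the two-factor Segre case.
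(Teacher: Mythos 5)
Your proof is correct and mirrors exactly what the paper does: the paper simply remarks that ``the same proof applies'' as for \autoref{prop:bipartite}, i.e.\ the induction via $\Sing(\sigma_r(\nu_2(\bbP V_1))) = \sigma_{r-1}(\nu_2(\bbP V_1))$ together with \autoref{lem:sing} and \autoref{prop:seg_ver} as base case. There is no divergence in method, only that you have spelled out what the paper leaves implicit.
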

We record here two results which are consequence of classification results for the singular locus of secant varieties in special cases. A great deal of work has been devoted to the study of singular loci of secant varieties: we mention  \cite{Kanev:Chordal,MOZ:SecantCumulants,han2018singularities,khadam2020secant,furukawa2021singular} for the particular case of varieties of tensors, and \cite{coppens2004singular} for the case of curves.

In \cite{coppens2004singular}, the singular locus of $\sigma_r(C)$ is determined in the case where $C \subseteq \bbP ^N$ is an irreducible curve satisfying a fairly strong linear independence condition. This result applies to the rational normal curve which in turn allows us to deduce the following:
\begin{proposition}\label{prop:rnc}
Let $V_1$ be a vector space of dimension $2$ and let $d,r$ be integers with $r \leq d/2$. Let $X = \sigma_r(\nu_d(\bbP V_1)) \subseteq \bbP S^d V_1$. Then $G_X = G(V_1)$.
\end{proposition}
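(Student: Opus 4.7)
The plan mirrors the singular-locus argument in the proof of \autoref{prop:bipartite}: invoke \autoref{lem:sing} together with an inductive identification of $\Sing(X)$ to reduce to a smaller secant variety. The inclusion $G(V_1) \subseteq G_X$ is automatic, since every element of $\GL(V_1)$ preserves the rational normal curve $\nu_d(\bbP V_1)$ and hence each of its secant varieties; the content of the statement is the reverse inclusion, which I would prove by induction on $r$.

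For the base case $r = 1$, the variety $X = \nu_d(\bbP V_1)$ is the rational normal curve, and \autoref{prop:seg_ver} applied with $k = 1$ yields $G_X = G(V_1)$, the symmetric-group factor $\tilde{\frakS}$ being trivial. For the inductive step, assume the conclusion for $r-1$ and let $2 \leq r \leq d/2$, so that $r-1$ also satisfies the hypothesis of the proposition. The key external input is the main theorem of \cite{coppens2004singular}: when $r \leq d/2$, the linear-general-position condition on $\nu_d(\bbP V_1) \subseteq \bbP S^d V_1$ is satisfied, and the cited result gives
\[
\Sing\bigl(\sigma_r(\nu_d(\bbP V_1))\bigr) = \sigma_{r-1}(\nu_d(\bbP V_1)).
\]
Combining this identification with \autoref{lem:sing} and the inductive hypothesis yields
\[
G_X \subseteq G_{\Sing(X)} = G_{\sigma_{r-1}(\nu_d(\bbP V_1))} = G(V_1),
\]
which together with the trivial containment closes the induction.

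The main obstacle is verifying that the hypotheses of the cited theorem from \cite{coppens2004singular} are indeed met by the rational normal curve in the range $r \leq d/2$; this boils down to the classical fact that any $d+1$ points on $\nu_d(\bbP V_1)$ are linearly independent, which in turn forces the required genericity of $r$-secant planes as long as $2r \leq d$. Once this is in hand, the induction closes cleanly and no separate comparison of $G_X^\circ$ with $G_X$ is needed: since $G(V_1)$ is itself connected and the chain of inclusions pinches $G_X$ directly to $G(V_1)$, the full preserver is identified in one stroke.
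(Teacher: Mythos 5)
Your proof is correct and follows essentially the same route as the paper: invoke the main theorem of \cite{coppens2004singular} to obtain $\Sing(\sigma_r(\nu_d(\bbP V_1))) = \sigma_{r-1}(\nu_d(\bbP V_1))$ in the range $r \leq d/2$, then combine \autoref{lem:sing} with an induction on $r$ that bottoms out at \autoref{prop:seg_ver}. The only cosmetic difference is that the paper states the linear-independence hypothesis of Coppens' theorem as ``any $2r$ points on $C$ are linearly independent'' while you deduce it from the stronger classical fact that any $d+1$ points on the rational normal curve are independent; both reductions are valid.
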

\begin{proof}
    By the main Theorem in \cite{coppens2004singular}, if $C \subseteq \bbP^N$ is an irreducible curve with the property that any $2r$ points on $C$ are linearly independent in $\bbP^N$, then $\Sing(\sigma_r(C)) = \sigma_{r-1}(C)$. This property is satisfied when $C = \nu_d(\bbP^1)$ is the rational normal curve and $r \leq d/2$. An induction argument on $r$ completes the proof.
\end{proof}
The results of \cite{Kanev:Chordal,han2018singularities} allow us to characterize the preserver of the second and third secant varieties of Veronese varieties.
\begin{proposition}\label{prop:small_waring}
Let $V_1$ be a vector space of dimension $n \geq 2$. Let $d \geq 3$, $r = 2,3$ and $X = \sigma_r(\nu_d(\bbP V_1))$. If $(d,n,r) \neq (3,2,2),(3,2,3),(4,2,3)$ then $G_X = G(V_1)$. Otherwise $G_X = \GL(S^dV_1)$.
\end{proposition}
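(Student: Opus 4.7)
The plan is to apply the singular-locus argument of \autoref{lem:sing} together with the classifications of the singular loci of $\sigma_2$ and $\sigma_3$ of Veronese varieties from \cite{Kanev:Chordal} and \cite{han2018singularities} respectively, mirroring the strategy already used in \autoref{prop:bipartite}, \autoref{prop:bipartite_symmetric}, and \autoref{prop:rnc}.

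First I would dispose of the exceptional triples $(d,n,r) \in \{(3,2,2),(3,2,3),(4,2,3)\}$. A direct dimension count for secants of the rational normal curve $\nu_d(\bbP^1)$ shows that in each of these three cases $\sigma_r(\nu_d(\bbP V_1))$ fills the ambient space $\bbP S^d V_1$; hence $X = \bbP S^d V_1$ and therefore $G_X = \GL(S^d V_1)$ trivially.

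For the non-exceptional cases with $r=2$, Kanev's theorem in \cite{Kanev:Chordal} provides $\Sing(\sigma_2(\nu_d(\bbP V_1))) = \nu_d(\bbP V_1)$ whenever $\sigma_2(\nu_d(\bbP V_1))$ is a proper subvariety of $\bbP S^d V_1$. Combining this with \autoref{lem:sing} and \autoref{prop:seg_ver} gives
\[
G(V_1) \subseteq G_X \subseteq G_{\Sing(X)} = G_{\nu_d(\bbP V_1)} = G(V_1),
\]
where the leftmost inclusion is immediate from the fact that $G(V_1)$ preserves $\nu_d(\bbP V_1)$ and therefore each of its secants. For the non-exceptional cases with $r=3$, the corresponding identification $\Sing(\sigma_3(\nu_d(\bbP V_1))) = \sigma_2(\nu_d(\bbP V_1))$ proved in \cite{han2018singularities}, together with \autoref{lem:sing} and the already-established $r=2$ case, yields
\[
G(V_1) \subseteq G_X \subseteq G_{\Sing(X)} = G_{\sigma_2(\nu_d(\bbP V_1))} = G(V_1).
\]

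I expect the main technical obstacle to be verifying that the singular-locus descriptions in \cite{Kanev:Chordal} and \cite{han2018singularities} do apply in the full claimed range of non-exceptional parameters $(d,n)$; these references may impose additional hypotheses (for example, lower bounds on $d$, or non-defectivity conditions on the ambient secant variety) that have to be traced carefully through the statement, and in the borderline small cases one may need to confirm by hand that no further exceptional triples sneak in. Once the singular-locus identifications are in place, the inductive two-line argument via \autoref{lem:sing} closes the proof with no further difficulty.
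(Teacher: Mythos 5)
Your overall strategy — singular-locus reduction via \autoref{lem:sing} combined with the classifications of $\Sing(\sigma_2)$ and $\Sing(\sigma_3)$ from Kanev and Han — is exactly the paper's approach, and your treatment of the exceptional triples, the $r=2$ case, and the straightforward part of the $r=3$ case matches.

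However, there is a genuine gap in the $r=3$ argument. You write $\Sing(\sigma_3(\nu_d(\bbP V_1))) = \sigma_2(\nu_d(\bbP V_1))$ across all non-exceptional parameters and flag only that the references ``may impose additional hypotheses.'' The issue is not a missing hypothesis but a different conclusion: when $d=4$ and $n \geq 4$, Han's theorem gives $\Sing(\sigma_3(\nu_4(\bbP V_1))) = \Sub_2$, the variety of quartics expressible in at most two variables after a linear change of coordinates, and this is strictly larger than $\sigma_2(\nu_4(\bbP V_1))$. Your inductive two-line argument therefore does not close in this range: instead of reducing to the $r=2$ case, one lands on a subspace variety, and a separate result — the characterization $G_{\Sub_2} = G(V_1)$ from \autoref{thm: subspace vars} — is needed to finish. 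The equality $\Sing(\sigma_3) = \sigma_2$ that you rely on holds only for $d=3$, $(d,n)=(4,3)$, and $d \geq 5$, so the proof requires this case split and the additional input from the subspace-variety preserver theorem.
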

\begin{proof}
Since $G(V_1) = G_{\nu_d(\bbP V_1)}$, the inclusion $G(V_1) \subseteq G_X$ always holds.

For $r=2$, and $(d,n) \neq (3,2)$, \cite[Thm. 3.3]{Kanev:Chordal} states that $\Sing(X) = \nu_d(\bbP V_1)$. Therefore $G_{X} \subseteq G_{\nu_d(\bbP V_1)} = G(V_1)$ showing equality.

If $r=3$, we consider two cases. First suppose $d =3$, $(d,n)=(4,3)$ or $d \geq 5$: in this case \cite[Cor. 2.11 and Thm. 2.1]{han2018singularities} state that $\Sing(X) = \sigma_2(\nu_d(\bbP V_1))$; using again \cite[Thm. 3.3]{Kanev:Chordal}, we deduce $\Sing(\Sing(X)) = \nu_d(\bbP V_1))$. Therefore $G_{X} \subseteq G_{\nu_d(\bbP V_1)} = G(V_1)$ showing equality.

If instead $d=4$ and $n \geq 4$, \cite[Thm. 2.1]{han2018singularities} shows that $\Sing( X ) = \Sub_2$ is the variety of forms which can be written in (at most) two variables after a suitable change of coordinates. The result in this case follows by \autoref{thm: subspace vars}, which guarantees that $G_{\Sub_2} = G(V_1)$ as desired. 

In the case $(d,n,r) = (3,2,r)$,  we have $\sigma_2(\nu_3(\bbP V_1)) = \bbP S^3 V_1$, therefore $G_{\sigma_2(\nu_3(\bbP^1))} =G_{\sigma_3(\nu_3(\bbP^1))}  = \GL(V_1)$. Similarly, in the case $(d,n,r) = (4,2,3)$, we have $\sigma_3(\nu_4(\bbP V_1)) = \bbP S^4 V_1$ and $G_{\sigma_3(\nu_4(\bbP^1))} = \GL(S^4 V_1)$. 
\end{proof}

In \autoref{sec:subspace}, we use the singular locus method to determine the linear preserver of subspace varieties. Moreover, in \autoref{sec:part}, we determine the preserver of the variety of tensors of slice rank $1$ and more generally partition varieties. A straightforward but important fact is the characterization of the singular locus of reducible varieties: if $X = X_1 \cup \cdots \cup X_s$ is a reducible variety, and $X_i$'s are its irreducible components, then
\[
\Sing(X) = \bigcup_{i = 1}^s \Sing(X_i) \cup \bigcup_{i<j} (X_i \cap X_j).
\]
For a positive integer $k$, let $\Sing^{(k)}(X)=\Sing \circ \dots \circ \Sing (X)$ ($k$ times), and set $\Sing^{(0)}(X)=X$.

\subsection{Dual varieties}
Another auxiliary variety which is useful in determining $G_X$ is the \emph{dual variety} of $X$. If $X \subseteq \bbP V$ is a variety, its dual variety is
\[
X^\vee = \bar{ \{ H \in \bbP V^* : T_x X \subseteq H \text{ for some $x \in X \setminus \Sing(X)$}\} }.
\]
A fundamental property of dual varieties is the biduality theorem \cite[Theorem 1.1]{Gelfand1994DiscriminantsRA}: the dual variety of the dual variety of $X$ coincides with $X$, that is ${X^{\vee}}^\vee = X$. Under the natural isomorphism $\GL(V) = \GL(V^*)$, we have the following result.
\begin{proposition}\label{prop:dual}
Let $X \subseteq \bbP V$ be a variety. Then $G_X=G_{X^{\vee}}$.
\end{proposition}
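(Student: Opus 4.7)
The plan is to show the inclusion $G_X \subseteq G_{X^\vee}$ directly from the definition; the reverse inclusion $G_{X^\vee} \subseteq G_X$ then comes for free by applying the first inclusion to $X^\vee$ in place of $X$ and invoking the biduality theorem ${X^\vee}^\vee = X$ cited in the text.

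Before starting, I would recall that the identification $\GL(V) = \GL(V^*)$ used throughout the paper is the contragredient one, under which an element $g \in \GL(V)$ acts on a linear form $\phi \in V^*$ by $g \cdot \phi = \phi \circ g^{-1}$; a hyperplane $H \subseteq V$ with defining linear form $\ell_H$ is thus mapped to the hyperplane with defining form $\ell_H \circ g^{-1}$, which is precisely $g\cdot H$ regarded as a subspace of $V$. This is the same action used earlier on polynomials.

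The core step is the following. Fix $g \in G_X$ and take $[H]$ in the open dense subset
\[
X^{\vee,\circ} = \{ [H] \in \bbP V^* : T_xX \subseteq H \text{ for some } x \in X\setminus \Sing(X)\},
\]
whose closure is $X^\vee$ by definition. As observed in the proof of \autoref{lem:sing}, $g$ transports affine tangent spaces: $T_{gx}X = g \cdot T_xX$, so in particular smooth points go to smooth points. The containment $T_xX \subseteq H$ is the vanishing statement $\ell_H(v)=0$ for all $v \in T_xX$, which is equivalent to $(\ell_H \circ g^{-1})(w)=0$ for all $w \in g \cdot T_xX = T_{gx}X$, i.e., $T_{gx}X \subseteq g\cdot H$. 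Hence $[g \cdot H] \in X^{\vee,\circ}$. Since $g$ is a regular automorphism of $\bbP V^*$ preserving the Zariski-dense subset $X^{\vee,\circ}$ of $X^\vee$, it also preserves the Zariski closure, giving $g \in G_{X^\vee}$.

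The only delicate points are bookkeeping: one must ensure that $\GL(V)$ is identified with $\GL(V^*)$ via the contragredient action, so that the incidence $T_xX \subseteq H$ is faithfully transported to $T_{gx}X \subseteq g\cdot H$ without inversion or duality errors, and one must pass from $X^{\vee,\circ}$ to its closure $X^\vee$ by continuity of the $\GL(V^*)$-action. Both are essentially automatic once biduality is invoked, so I expect no substantial obstacle.
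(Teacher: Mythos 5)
Your proof is correct and follows exactly the paper's argument: both show $G_X \subseteq G_{X^\vee}$ by transporting tangent hyperplanes at smooth points via $T_{gx}X = g\cdot T_xX$, pass to the Zariski closure, and then invoke biduality ${X^\vee}^\vee = X$ to reverse the roles. Your write-up simply spells out the contragredient identification $\GL(V)=\GL(V^*)$ and the density argument a bit more explicitly.
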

\begin{proof}
Let $g \in G_X$ and let $H \in \bbP V^*$ be a hyperplane tangent to $X$, that is $H \supseteq T_xX$ for some smooth point $x \in X$. Then $g H \supseteq g \cdot T_xX= T_{gx}X$ is tangent to $X$ at the smooth point $gx \in X$. This shows that if $H \in X^\vee$ then $gH \in X^\vee$; in other words $g$ preserves the set of hyperplanes tangent to $X$ at smooth points. Passing to the closure, we deduce that $g$ preserves $X^\vee$, that is $g \in G_{X^\vee}$. We obtain $G_X \subseteq G_{X^\vee}$. Exchanging the roles of $X$ and $X^\vee$, we have the reverse inclusion by the biduality theorem since ${X^{\vee}}^ \vee=X$.
\end{proof}

We use this result in~\autoref{sec:quant} to characterize the linear preservers of hyperdeterminantal surfaces.

\section{Preservers of secant varieties of Segre varieties}\label{sec:secant}

This section is devoted to the proof of \autoref{thm:main} and of other characterizations of the preserver of secant varieties of Segre varieties. We begin characterizing the preserver in the case of pencils of matrices, that is tensors of order $3$ with one factor of dimension $2$. In this case, we characterize the range $r$ for which the preserver is not the expected one.
\begin{proposition}\label{prop:example}
Let $a,b,r$ be positive integers satisfying $2 \leq a \leq r < \min\{b, 2a\}$. Let $Y=\bbP(\bbC^2)\times \bbP(\bbC^a)\times \bbP(\bbC^b)\subseteq \bbP(\bbC^2 \otimes \bbC^a \otimes \bbC^b)$ and let $X = \sigma_r(Y)$. Then $G_{\sigma_r(Y)}=G(\bbC^2 \otimes \bbC^a, \bbC^b)$.
\end{proposition}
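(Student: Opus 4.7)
The plan is to reduce the statement to a bipartite setting by identifying $\sigma_r(Y)$ with the variety
\[
Z := \sigma_r(\bbP(\bbC^2 \otimes \bbC^a) \times \bbP(\bbC^b)) \subseteq \bbP(\bbC^2 \otimes \bbC^a \otimes \bbC^b),
\]
of tensors whose $(\{1,2\}|\{3\})$-flattening has rank at most $r$. Once the identification $\sigma_r(Y) = Z$ is established, the condition $r < \min\{2a, b\} = \min\{\dim(\bbC^2 \otimes \bbC^a), \dim \bbC^b\}$ allows a direct application of \autoref{prop:bipartite} to $Z$, yielding $G_Z = G(\bbC^2 \otimes \bbC^a, \bbC^b)$ (assuming $2a \neq b$; in the borderline case $2a = b$ a swap $\frakS_2$ would need to be addressed separately).

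The inclusion $\sigma_r(Y) \subseteq Z$ is immediate because tensor rank bounds flattening rank. For the reverse inclusion, since both varieties are irreducible, it suffices to exhibit a dense open subset of $Z$ contained in $\sigma_r(Y)$. I would take a generic rank-$r$ tensor $M \in Z$, regarded as a linear map $M \colon (\bbC^b)^* \to \bbC^2 \otimes \bbC^a$ with $r$-dimensional image $L \subseteq \bbC^2 \otimes \bbC^a$, and write a matrix decomposition $M = \sum_{i=1}^r X_i \otimes u_i$ with $\{X_i\}_{i=1}^r$ a basis of $L$ and $u_i \in \bbC^b$. Provided the $X_i$ can be chosen of rank one as $2 \times a$ matrices, $M$ has tensor rank at most $r$ and thus lies in $\sigma_r(Y)$. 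The problem therefore reduces to showing that a generic $r$-dimensional subspace $L \subseteq \bbC^2 \otimes \bbC^a$ is spanned by rank-one tensors, that is, by points of the affine cone $\Sigma$ over the Segre variety $\bbP(\bbC^2) \times \bbP(\bbC^a) \subseteq \bbP(\bbC^2 \otimes \bbC^a)$.

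The natural way to verify this is via the rational evaluation map
\[
\Sigma^r \dashrightarrow \Gr(r, \bbC^2 \otimes \bbC^a)
\]
sending a linearly independent $r$-tuple of rank-one tensors to its linear span. I would show dominance by a dimension count: with $\dim \Sigma = a+1$, generic intersection $\dim(L \cap \Sigma) = r - a + 1 \geq 1$ (nonnegative precisely because $r \geq a$), and generic fibers of dimension $r(r - a + 1)$, the image has dimension $r(a+1) - r(r - a + 1) = r(2a - r) = \dim \Gr(r, \bbC^2 \otimes \bbC^a)$, so the map is dominant, and hence generic $L$ is spanned by rank-one tensors. The hard part will be making this count fully rigorous: one must check that for a generic $r$-dimensional $L$ the intersection $L \cap \Sigma$ actually attains its expected dimension and contains $r$ linearly independent elements. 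For $r > a$ this follows from a Bertini-type argument on the linear nondegeneracy of a generic linear section of the Segre; for the boundary case $r = a$ it reduces to the classical fact that a generic $\bbP^{a-1} \subseteq \bbP^{2a-1}$ meets the nondegenerate degree-$a$ Segre variety $\bbP(\bbC^2) \times \bbP(\bbC^a)$ in $a$ points in linearly general position.
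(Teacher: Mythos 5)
Your overall strategy is identical to the paper's: reduce to the identification $\sigma_r(Y)=Z:=\sigma_r(\bbP(\bbC^2\otimes\bbC^a)\times\bbP(\bbC^b))$, then invoke \autoref{prop:bipartite}. The difference is that the paper obtains this identification in one line by citing \cite[Theorem~1.1]{landsberg2007ideals}, whereas you attempt to reprove it geometrically. The fact you are after---that a generic $r$-dimensional subspace $L\subseteq\bbC^2\otimes\bbC^a$ is spanned by rank-one matrices whenever $a\leq r<2a$---is true, but your dominance argument for $\Sigma^r\dashrightarrow\Gr(r,\bbC^2\otimes\bbC^a)$ is circular as written: you compute the image dimension as $r(a+1)-r(r-a+1)$ by declaring the generic fiber dimension to be $r(r-a+1)$, yet justifying that fiber dimension already presupposes both that a generic $L$ lies in the image and that $\dim(L\cap\Sigma)$ attains the expected value over the image. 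Moreover one cannot break the circularity with the obvious explicit example: for $L_0=\langle e_1\otimes f_1,\dots,e_1\otimes f_a,\,e_2\otimes f_1,\dots,e_2\otimes f_{r-a}\rangle$ one checks that $L_0\cap\Sigma$ contains the $a$-dimensional linear space $e_1\otimes\bbC^a$, so $\dim(L_0\cap\Sigma)=a>r-a+1$ whenever $r<2a-1$, and the fiber over $L_0$ is too large to certify dominance by upper semicontinuity. Closing the gap rigorously requires a general position / Bertini-type statement for linear sections of the nondegenerate Segre $\bbP^1\times\bbP^{a-1}\subseteq\bbP^{2a-1}$ (nondegeneracy of a generic positive-dimensional section for $r>a$, linear independence of the $a$ points of a generic $\bbP^{a-1}$-section for $r=a$). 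These are classical but not immediate, and the paper's reliance on \cite{landsberg2007ideals} is simply the cleaner route to the same identification.

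Your parenthetical remark about the case $2a=b$ is a sharp observation: when $\dim(\bbC^2\otimes\bbC^a)=\dim\bbC^b$, \autoref{prop:bipartite} yields the extra factor $\frakS_2$ swapping the two bipartite slots, and this is not recorded in the statement of \autoref{prop:example}.
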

\begin{proof}
A consequence of \cite[Theorem 1.1]{landsberg2007ideals} is that in this range one has the equality $\sigma_r(Y)=\sigma_r(\bbP(\bbC^2 \otimes \bbC^a)\times \bbP(\bbC^b))$. The result follows from~\autoref{prop:bipartite}.
\end{proof}
However, our main result \autoref{thm:main} shows that in a wide range of cases secant varieties of Segre varieties have the expected preserver. Recall that, following \cite[Definition 3.6]{draisma2011partition} we say that a collection of bipartitions $\calB$ of $[k]$ is \emph{separating} for $[k]$ if for every $i,j$ there exist a bipartition $\{ I,J\} \in \calB$ such that $i \in I , j \in J$. Given $\bfn = (n_1 \vvirg n_k)$, let
\begin{equation*}
s(\bfn) = \max \left\{ \min \left\{ \textstyle \prod _{i \in I} n_i,  \prod _{j \in J} n_j : \{I,J\} \in \calB \right\} : \calB \text{ separating collection for $[k]$}\right\}.
\end{equation*}
\begin{theorem}[\autoref{thm:main}]\label{thm: main for segre}
Let $\bfn = (n_1 \vvirg n_k)$ be a $k$-tuple of positive integers. Let $1\leq r \leq s(\bfn)-1$ be an integer. Let $V_1 \vvirg V_k$ be vector spaces with $\dim V_i = n_i$, let $Y = \bbP V_1 \ttimes \bbP V_k$ and let $X = \sigma_r(Y)$. Then $G_{\sigma_r(Y)}={G(V_1 \vvirg V_k) \rtimes \frakS}$, where $\frakS \subseteq \frakS_k$ is the subgroup permuting factors of the same dimension.
\end{theorem}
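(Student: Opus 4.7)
The plan is to exploit the prolongation description of low-degree equations of secant varieties together with the contraction technique of \autoref{lemma: derivatives} to show that $G_X$ preserves a linear space $E' \subseteq I_2(Y)$ whose common zero locus equals $Y$. Combined with \autoref{prop:seg_ver}, this forces $G_X \subseteq G_Y = G(V_1 \vvirg V_k) \rtimes \frakS$, and the reverse containment is immediate since the latter group preserves $Y$ and hence $\sigma_r(Y)$.

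The ideal $I(Y)$ of the Segre variety is generated in degree $2$ by the $2 \times 2$ minors of flattenings, so $I_1(Y) = 0$, and \autoref{prop: prolongation} yields $I_{r+1}(X) = I_2(Y)^{(r-1)}$. By \autoref{lemma: stabilize ideal} the group $G_X$ preserves $I_{r+1}(X)$, and by \autoref{lemma: derivatives} it also preserves
\[
E' := S^{r-1}V \contract I_{r+1}(X).
\]
By the very definition of prolongation one has $E' \subseteq I_2(Y)$, so every point of $Y$ is a common zero of $E'$.

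For the opposite inclusion on zero loci, use the hypothesis $r+1 \leq s(\bfn)$ to choose a separating collection $\calB$ of bipartitions of $[k]$ with $\prod_{i \in I} n_i \geq r+1$ and $\prod_{j \in J} n_j \geq r+1$ for every $\{I,J\} \in \calB$. For each such bipartition, the $(r+1)\times(r+1)$ minors of the flattening $V \to \Hom(\bigotimes_{i\in I} V_i^*, \bigotimes_{j \in J} V_j)$ lie in $I_{r+1}(X)$, and a direct Laplace-expansion calculation shows that suitable $(r-1)$-fold partial derivatives of any such minor produce an arbitrary $2 \times 2$ minor of the same flattening. Hence $E'$ contains every $2\times 2$ minor of every $\{I,J\}$-flattening with $\{I,J\} \in \calB$, and these cut out $Y$ set-theoretically by the classical fact that a tensor whose $\{I,J\}$-flattening has rank at most $1$ for every $\{I,J\}$ in a separating collection is necessarily a simple tensor (see \cite{draisma2011partition}).

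Combining the two inclusions gives that the common zero locus of $E'$ is exactly $Y$. Since $G_X$ preserves $E'$ it also preserves its common zero locus, so $G_X \subseteq G_Y = G(V_1 \vvirg V_k) \rtimes \frakS$ by \autoref{prop:seg_ver}, as required. The case $r=1$ is trivial since $X = Y$. The main obstacle is the set-theoretic input that $\calB$-separability of a tensor forces it to be simple; the role of the hypothesis $r+1 \leq s(\bfn)$ is precisely to guarantee the existence of a separating collection whose flattenings are large enough that $(r+1) \times (r+1)$ minors exist along each cut, thereby producing enough $2 \times 2$ minors inside $E'$ for that input to apply.
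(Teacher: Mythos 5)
Your proposal is correct and takes essentially the same route as the paper: prolongation via Sidman--Sullivant to identify $I_{r+1}(X)$ with $I_2(Y)^{(r-1)}$, the contraction lemma to pass to $E' = S^{r-1}V\contract I_{r+1}(X)$, and the observation that $(r-1)$-fold partial derivatives of $(r+1)\times(r+1)$ minors of flattenings in a separating collection yield all the $2\times 2$ minors of those flattenings (this is exactly where $r+1\leq s(\bfn)$ is used). The only difference is the final step, where you conclude by showing the common zero locus of $E'$ is set-theoretically $Y$ and using that $G_{E'}$ preserves this zero locus, whereas the paper instead establishes the linear-algebraic equality $E' = I_2(Y)$; both routes are valid and rest on the same inputs.
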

\begin{proof}
Write $V=V_1\otimes \dots \otimes V_k$ and let $\calB=\{I,J\}$ be a bipartition which realizes the maximum in the definition of $s(\bfn)$. For a tensor $T \in V$, the $2 \times 2$ minors of the flattenings $T_{I} : \bigotimes_{i \in I} V_i^* \to \bigotimes_{j \in J} V_j$ are polynomials in the coefficients of $T$, that is elements of $S^2 V^*$. It is a classical fact that $I(Y)$ is generated by such $2 \times 2$ minors: this is a consequence of Kostant's Theorem \cite[Chapter 16]{landsberg2012tensors} and straightforward representation-theoretic considerations; see also \cite[Theorem 3.9]{draisma2011partition}.

By \autoref{prop: prolongation}, we have $I_{r+1}(\sigma_r(Y))=I_2(Y)^{(r-1)}$. We are going to prove that $S^{r-1}(V) \contract I_2(Y)^{(r-1)} = I_2(Y)$. To see this, notice that $I_{r+1}(\sigma_r(Y))$ contains the subspace $M_r \subseteq S^{r+1}V^*$ spanned by size $r+1$ minors of the flattening $T_{I}$. Indeed, writing the flattening maps in coordinates, one observes that the representing matrix has the $n_1 \cdots n_k$ entries of $T$ as coefficients; as a consequence, the $(r-1)$-th order partials of a minor of size $r+1$ are spanned by minors of size $2$; in other words $M_r \subseteq I(Y)^{(r+1)}$. On the other hand, since $s(\bfn) \geq r+1$, every submatrix of size $2$ can be extended to a submatrix of size $r+1$. Moreover, after choosing coordinates, each entry of the matrix representing the flattening map is a coefficient of the tensor, and each coefficient occurs exactly once; in particular the flattening map can be regarded as a generic symbolic matrix, hence any determinant of a submatrix of size $2$ arises as derivative of any of its extensions. This shows
\[
I_2(Y) \supseteq S^{r-1}(V) \contract I_2(Y)^{(r-1)} \supseteq S^{r-1}(V) \contract M_r = I_2(Y).
\]
We deduce $ G_{\sigma_r(Y)} \subseteq  G_Y$ because
\begin{align*}
 G_{\sigma_r(Y)} &\subseteq G_{I_{2}(\sigma_r(Y))^{(r-1)}}\\
& \subseteq G_{S^{r-1}(V) \contract I_2(Y)^{(r-1)}}  \\
& =G_{I_2(Y)} = G_Y,
\end{align*}
where the first inclusion follows from \autoref{lemma: stabilize ideal}. On the other hand, the inclusion $G_Y \subseteq G_{\sigma_r(Y)}$ always holds. By \autoref{prop:seg_ver}, we conclude $G_X = G(V_1 \vvirg V_k) \rtimes \frakS$.
\end{proof}

If $n_1 = \cdots = n_k =n$, we obtain the following consequence, which follows by taking all bipartitions of $[k]$ with one set having size $\lfloor{k/2}\rfloor$.

\begin{corollary}\label{cor:secant_segre}
Let $k \geq 2$ and let $1 \leq r < n^{\lfloor k/2 \rfloor}$ be an integer. Then
\[
G_{\sigma_r(\bbP(\bbC^{n})^{\times k})}={G(\bbC^{n} \vvirg \bbC^n) \rtimes \frakS_k}.
\]
\end{corollary}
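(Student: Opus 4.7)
The plan is to deduce this corollary directly from \autoref{thm: main for segre} by estimating $s(\bfn)$ from below when $\bfn = (n \vvirg n)$. Since in that case all vector spaces $V_i$ have dimension $n$, the subgroup $\frakS \subseteq \frakS_k$ appearing in the main theorem is the full symmetric group $\frakS_k$, so it suffices to verify the hypothesis $r \leq s(\bfn) - 1$, which here reduces to showing $s(\bfn) \geq n^{\lfloor k/2 \rfloor}$.

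To bound $s(\bfn)$, I would exhibit a concrete separating collection achieving the desired value. Let $\calB$ be the collection of all bipartitions $\{I, J\}$ of $[k]$ with $|I| = \lfloor k/2 \rfloor$ and $|J| = \lceil k/2 \rceil$. For such a bipartition, $\min\{n^{|I|}, n^{|J|}\} = n^{\lfloor k/2 \rfloor}$. So once $\calB$ is verified to be separating, the definition of $s(\bfn)$ immediately yields $s(\bfn) \geq n^{\lfloor k/2 \rfloor}$.

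The verification that $\calB$ is separating is the only combinatorial step, and it is elementary: for any distinct $i, j \in [k]$, since $k \geq 2$ we have $1 \leq \lfloor k/2 \rfloor \leq k - 1$, so one can always choose a subset $I \subseteq [k]$ of cardinality $\lfloor k/2 \rfloor$ containing $i$ but not $j$; its complement $J$ then has size $\lceil k/2 \rceil$ and contains $j$. Thus $\{I, J\} \in \calB$ separates $i$ from $j$.

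Putting these pieces together: for any $r$ with $1 \leq r \leq n^{\lfloor k/2 \rfloor} - 1$, i.e.\ $r < n^{\lfloor k/2 \rfloor}$, we have $r \leq s(\bfn) - 1$, and \autoref{thm: main for segre} applies. Since all factors have the same dimension $n$, the subgroup of $\frakS_k$ permuting factors of equal dimension is all of $\frakS_k$, giving $G_{\sigma_r(Y)} = G(\bbC^n \vvirg \bbC^n) \rtimes \frakS_k$ as claimed. I do not foresee any obstacle beyond the trivial combinatorial check above; the work is all contained in the main theorem.
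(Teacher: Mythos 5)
Your proof is correct and takes exactly the approach the paper indicates in its one-line remark preceding the corollary ("by taking all bipartitions of $[k]$ with one set having size $\lfloor k/2 \rfloor$"); you have simply spelled out the elementary check that this collection is separating, which is the only substantive content to verify.
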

We expect \autoref{thm: main for segre} to hold in a wider range and in the symmetric setting as well. In particular, we do not have examples where the preserver is not the expected one except in cases where a phenomonon similar to the one of \autoref{prop:example} occurs: in other words, the only known setting where the preserver is larger than expected is, to the best of our knowledge, when the secant variety of a Segre variety turns out to be a secant variety of a larger Segre variety, with consequentely a larger preserver. 

However, the proof method of \autoref{thm: main for segre} has some clear limitations. In particular, in the (partially) symmetric setting, the standard flattenings are not generic matrices in the sense that their determinantal varieties do not have expected dimension and expected singular and jacobian loci; in these cases, it is hard to control partial derivatives of the linear spaces of minors. In fact, the results of \cite{BK:CactusScheme,BBF:CactusVars} suggest that the situation for the symmetric setting might be very different.

Regarding the range where $r \geq s(n_1 \vvirg n_k)$ there is a similar problem. Most of the known equations for higher secant varieties are determinantal and they are built via generalized flattening methods such as the Young flattenings of \cite{landsberg2013equations}. However, the resulting determinantal ideals are not generic, and it is hard to control their jacobian loci. 

Two explicit cases beyond the range of \autoref{thm: main for segre} are considered in \autoref{sec:compute}: they are $\sigma_3(\nu_3(\bbP^2)) \subseteq \bbP S^d \bbC^3$, and $\sigma_4(\bbP^2 \times \bbP^2\times \bbP^2) \subseteq \bbP (\bbC^3 \otimes \bbC^3 \otimes \bbC^3)$. In these cases, the Young flattening construction gives rise to a single determinantal equation, and an explicit calculation can show this has the expected preserver.

\section{Preservers of subspace varieties}\label{sec:subspace}

Subspace varieties are subvarieties of tensors with prescribed multilinear ranks, see \cite[Sec. 7.1]{landsberg2012tensors}. In this section, we study them in the setting of partially symmetric tensors. We characterize their singular locus and use such characterization to determine the linear preserver.

Let $V_1 \vvirg V_k$ be vector spaces and let $d_1 \vvirg d_k, r_1 \vvirg r_k$ be positive integers with $r_j \leq \dim V_j$. The \textit{subspace variety} $\setft{Sub}_{r_1,\dots, r_k} \subseteq \bbP(S^{d_1} V_1 \otimes \dots \otimes S^{d_k} V_k)$ of multilinear ranks $r_1 \vvirg r_k$ is
\[
\Sub_{r_1 \vvirg r_k} = \{ T \in S^{d_1} V_1 \ootimes S^{d_k} V_k : \rank(T : V_i^* \to S^{d_i-1} V_i \otimes \textstyle \bigotimes_{j\neq i} S^{d_j}V_j) \leq r_i\}.
\]
If $T \in \Sub_{r_1 \vvirg r_k}$, let $E_i' = \image( T : S^{d_i-1} V_i^* \otimes \textstyle \bigotimes_{j\neq i} S^{d_j}V_j^* \to V_i )$. Then $\dim E_i' \leq r_i$ and $T \in S^{d_1} E_1' \ootimes  S^{d_1} E_k'$.

\begin{remark}\label{rmk: rs one}
Note that $\Sub_{1 \vvirg 1} = \nu_{d_1 \vvirg d_k} (\bbP V_1 \ttimes \bbP V_k)$ is the Segre-Veronese variety. More generally, if $r_i = 1$ then $\Sub_{r_1 \vvirg r_k}$ is a product of a Veronese variety and a subspace variety on a smaller number of factors. For instance, if $r_k = 1$ then $\Sub_{r_1 \vvirg r_{k-1},1} = \Sub_{r_1 \vvirg r_{k-1}} \times \nu_{d_k}(\bbP V_k)$ in the Segre embedding of $\bbP(S^{d_1} V_1 \ootimes S^{d_{k-1}}V_{k-1}) \times \bbP S^{d_k} V_k$ in $\bbP( S^{d_1} V_1 \ootimes S^{d_k} V_k)$.
\end{remark}

In particular, \autoref{rmk: rs one} shows that the singularities of the subspace variety only depend on the multilinear ranks $r_1 \vvirg r_k$ which are not equal to $1$. In the multilinear setting, that is when $d_1 = \cdots = d_k = 1$, the singular locus is characterized in  \cite[Theorem 3.1]{landsberg2007ideals}.

The multilinear case with two factors is special: If $r_1 \leq r_2$ then the subspace variety $\Sub_{r_1,r_2} \subseteq \bbP (V_1 \otimes V_2)$ coincides with $\Sub_{r_1,r_1}$. We observe that this is essentially the only case of \emph{redundancy} for the subspace variety:
\begin{lemma}\label{lemma: nonredundancy}
    Let $V_1 \vvirg V_k$ be vector spaces, let $d_1 \vvirg d_k , r_1 \vvirg r_k$ be nonnegative integers with $r_i \leq \dim V_i$. Suppose $d_1 \geq 2$, or $d_1 = 1$ and $r_1 \leq \dim (S^{d_2 } \bbC^{r_2} \ootimes S^{d_k} \bbC^{r_k})$. Then
    \[
    \Sub_{r_1 - 1, r_2 \vvirg r_k} \subsetneq     \Sub_{r_1, r_2 \vvirg r_k}.
    \]
\end{lemma}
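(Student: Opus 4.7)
The inclusion $\Sub_{r_1-1, r_2 \vvirg r_k} \subseteq \Sub_{r_1, r_2 \vvirg r_k}$ is immediate from the definition of multilinear rank, so the only content of the statement is the strictness. The plan is to exhibit a single tensor $T \in S^{d_1} V_1 \ootimes S^{d_k} V_k$ whose first flattening has rank exactly $r_1$ while the $i$-th flattening has rank at most $r_i$ for $i \geq 2$.

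Fix subspaces $E_i \subseteq V_i$ with $\dim E_i = r_i$, fix a basis $e_1 \vvirg e_{r_1}$ of $E_1$, and look for $T$ of the form
\[
T = \sum_{j=1}^{r_1} e_j^{d_1} \otimes T_j', \qquad T_j' \in S^{d_2} E_2 \ootimes S^{d_k} E_k.
\]
Any such $T$ lies in $S^{d_1} E_1 \ootimes S^{d_k} E_k$, so its $i$-th multilinear rank is automatically at most $r_i$ for every $i \geq 2$. For the first flattening, a direct computation (symmetric partial differentiation) gives $e_i^* \contract T = d_1 \, e_i^{d_1-1} \otimes T_i'$, with $e_i^*$ extended arbitrarily to $V_1^*$. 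Hence the first flattening has rank equal to the number of linearly independent vectors among the $r_1$ elements $e_i^{d_1-1} \otimes T_i'$.

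The argument then splits along the two cases of the hypothesis. If $d_1 \geq 2$, the monomials $e_1^{d_1-1} \vvirg e_{r_1}^{d_1-1}$ are linearly independent in $S^{d_1-1} E_1$, so taking all $T_j' = e'$ for a single nonzero $e' \in S^{d_2} E_2 \ootimes S^{d_k} E_k$ immediately yields $r_1$ linearly independent contractions. If instead $d_1 = 1$, then $e_i^{d_1-1}$ is trivial and $e_i^* \contract T = T_i'$; here the hypothesis $r_1 \leq \dim(S^{d_2}\bbC^{r_2} \ootimes S^{d_k}\bbC^{r_k})$ is used precisely to choose $T_1' \vvirg T_{r_1}'$ to be linearly independent in $S^{d_2} E_2 \ootimes S^{d_k} E_k$. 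In both cases $T$ has first multilinear rank exactly $r_1$, so $T \in \Sub_{r_1,r_2 \vvirg r_k} \setminus \Sub_{r_1-1, r_2 \vvirg r_k}$.

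The proof is essentially a one-line construction, so there is no substantive obstacle; the only point worth noting is that the dimension bound in the $d_1 = 1$ case is sharp, since otherwise any tensor with first multilinear rank $r_1$ supported on $S^{d_2} E_2 \ootimes S^{d_k} E_k$ would be forced to have rank at most $\dim(S^{d_2}\bbC^{r_2} \ootimes S^{d_k}\bbC^{r_k})$, and $\Sub_{r_1, r_2 \vvirg r_k}$ would collapse onto a smaller subspace variety in that index.
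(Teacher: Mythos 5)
Your proof is correct and uses essentially the same construction as the paper: in the case $d_1 \geq 2$ your choice $T_j' = e'$ for all $j$ collapses to the paper's product tensor $T = (\sum_j e_j^{d_1}) \otimes e'$, and in the case $d_1 = 1$ your $T = \sum_i e_i \otimes T_i'$ with independent $T_i'$ is exactly the paper's construction. The unified ansatz $T = \sum_j e_j^{d_1} \otimes T_j'$ is a slightly more explicit way of packaging the two cases, but the witness tensors produced are identical.
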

\begin{proof}
    The proof is straightforward. It suffices to exhibit an element in $U = \Sub_{r_1, r_2 \vvirg r_k} \setminus \Sub_{r_1 - 1, r_2 \vvirg r_k}$. If $d_1 \geq 2$, let $T_1 = v_1^{\otimes d_1} + \cdots + v_{r_1}^{\otimes d_1} \in S^{d_1} V_1$ for linearly independent vectors $v_1,\dots,v_{r_1}$, and let $T_2$ for any nonzero element in $S^{d_2} E_2 \ootimes S^{d_k} E_k$, for choices $E_j \subseteq V_j$ with $\dim E_j = r_j$. Then $T = T_1 \otimes T_2$ is an element of $U$.

    If $d_1 = 1$ and $r_1 \leq \dim (S^{d_2 } \bbC^{r_2} \ootimes S^{d_k} \bbC^{r_k})$, let $T_1 \vvirg T_{r_1}$ be linearly independent elements in $S^{d_2} E_2 \ootimes S^{d_k} E_k$ for some choices of $E_j \subseteq V_j$ with $\dim E_j = r_j$. Let $v_1 \vvirg v_{r_1} \in V_1$ be linearly independent. Then $T = \sum_1^{r_1} v_i \otimes T_i$ is an element of $U$.
\end{proof}
Following \autoref{lemma: nonredundancy}, we say that a $k$-tuple of multilinear ranks $(r_1 \vvirg r_k)$ is \emph{non-redundant} (for the space $S^{d_1} \bbC^{n_1} \ootimes S^{d_k} \bbC^{n_k}$) if, for every $i$, either $d_i \geq 2$ or $r_i \leq \prod_{j \neq i} \binom{r_j+d_j-1}{d_j}$.

We characterize the singular locus of subspace varieties in general in \autoref{prop: singularsubspace}. First we record a technical result, which is useful in the following.
\begin{lemma}\label{lemma: tangentsubspace}
Let $T \in \Sub_{r_1 \vvirg r_k} \subseteq \bbP S^{d_1} V_1 \ootimes S^{d_k}V_k$. Let
    \[
    E_j' =  \image ( T: (S^{d_i-1} V_i^* \otimes \textstyle \bigotimes_{j\neq i} S^{d_j}V_j^*) \to V_i).
    \]
Let $E_j \supseteq E_j'$ be a space with $\dim E_j = r_j$. Then the linear space
    \[
    L(T) = S^{d_1}E_1 \ootimes S^{d_k}E_k + \sum_{i=1}^k  ( \frakgl(V_i) \cdot T )
    \]
    is contained in the affine tangent space to $\Sub_{r_1 \vvirg r_k} $ at $T$.

    If $E_i = E_i'$ for every $i$, then
    \[
   \dim L(T) =  \textstyle \prod_{j=1}^k \binom{r_j+d_j-1}{d_j} + \sum_{j=1}^k r_j(n_j-r_j) .
    \]
\end{lemma}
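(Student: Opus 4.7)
The argument splits naturally into two parts: the containment $L(T) \subseteq T_T \Sub_{r_1 \vvirg r_k}$, and the dimension calculation in the case $E_i = E_i'$ for every $i$.

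For the containment, observe first that $S^{d_1}E_1 \ootimes S^{d_k}E_k$ is a linear subspace of $\Sub_{r_1 \vvirg r_k}$: any tensor supported on $E_1 \vvirg E_k$ has $i$-th multilinear rank bounded by $\dim E_i = r_i$. Since $T$ belongs to this linear space, the entire space lies in $T_T \Sub_{r_1 \vvirg r_k}$. For the second summand, the group $\GL(V_1) \ttimes \GL(V_k)$ acts on $S^{d_1}V_1 \ootimes S^{d_k}V_k$ preserving all multilinear ranks, hence preserves $\Sub_{r_1 \vvirg r_k}$; differentiating the orbit map through $T$ at the identity shows $\frakgl(V_i) \cdot T \subseteq T_T \Sub_{r_1 \vvirg r_k}$ for each $i$, giving the containment.

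For the dimension, choose complements $W_i \subseteq V_i$ so that $V_i = E_i \oplus W_i$, with $\dim W_i = n_i - r_i$. This induces a decomposition
\[
\frakgl(V_i) = \End(E_i) \oplus \Hom(E_i, W_i) \oplus \Hom(W_i, E_i) \oplus \End(W_i).
\]
By the Leibniz rule, the last two summands vanish on $T$ (since $T$ has no $W_i$-factor in position $i$), while $\End(E_i) \cdot T$ lands back in $S^{d_1}E_1 \ootimes S^{d_k}E_k$ and contributes no new directions. The only potentially new contribution is $\Hom(E_i, W_i) \cdot T$, which lies inside
\[
U_i := S^{d_1}E_1 \otimes \cdots \otimes (S^{d_i - 1}E_i \cdot W_i) \otimes \cdots \otimes S^{d_k}E_k,
\]
where $S^{d_i - 1}E_i \cdot W_i \subseteq S^{d_i} V_i$ is the span of symmetric products with exactly one factor in $W_i$. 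Decomposing the ambient space $S^{d_1}V_1 \ootimes S^{d_k}V_k$ via the splittings $V_j = E_j \oplus W_j$, each $U_i$ sits in a distinct direct summand (the one having a single $W$-factor in slot $i$) and all $U_i$ are disjoint from $S^{d_1}E_1 \ootimes S^{d_k}E_k$. Hence the sum defining $L(T)$ is direct.

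It remains to establish the injectivity of $\Hom(E_i, W_i) \to U_i$, $A \mapsto A \cdot T$, for each $i$; this is where the hypothesis $E_i = E_i'$ enters. Let $T^{(i)} : S^{d_i - 1}V_i^* \otimes \bigotimes_{j \neq i}S^{d_j}V_j^* \to V_i$ be the flattening whose image defines $E_i'$; by hypothesis, $\image(T^{(i)}) = E_i$. For $A \in \Hom(E_i, W_i) \subseteq \End(V_i)$, extended by zero on $W_i$, the ``$W_i$-component'' of $A \cdot T$ under this flattening equals $d_i \cdot A \circ T^{(i)}$; since $T^{(i)}$ surjects onto $E_i$ and $A|_{E_i}$ is nonzero whenever $A \neq 0$, this composition is nonzero, so $A \cdot T \neq 0$. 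Therefore $\dim(\Hom(E_i, W_i) \cdot T) = r_i(n_i - r_i)$, and summing yields the claimed total $\prod_j \binom{r_j + d_j - 1}{d_j} + \sum_j r_j(n_j - r_j)$. The main technical subtlety is identifying the $W_i$-component of $A \cdot T$ with $d_i \cdot A \circ T^{(i)}$ in the partially symmetric setting, which amounts to careful bookkeeping with the Leibniz rule; the remainder is linear algebra.
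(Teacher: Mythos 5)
Your proof is correct and follows essentially the same route as the paper's: containment via the inclusion $S^{d_1}E_1 \otimes \cdots \otimes S^{d_k}E_k \subseteq \Sub_{r_1 \vvirg r_k}$ together with the fact that $\sum_i \frakgl(V_i)\cdot T$ is the tangent space to the $G(V_1 \vvirg V_k)$-orbit, and the dimension count via the four-block decomposition of $\frakgl(V_i)$ induced by $V_i = E_i \oplus W_i$. You spell out the injectivity of $\Hom(E_i,W_i) \to S^{d_1}V_1 \ootimes S^{d_k}V_k$, $A \mapsto A\cdot T$, in more detail than the paper, which simply invokes conciseness of $T$; your reduction to the surjectivity of $T^{(i)}$ onto $E_i$ is the correct content of that assertion (the scalar in ``$d_i \cdot A \circ T^{(i)}$'' depends on the contraction normalization, but any nonzero constant suffices, so nothing is affected).
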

\begin{proof}
  The terms $\frakgl(V_j) \cdot T$ are the image of $T$ via the Lie algebra action $\frakgl(V_j)$. In other words the space $\sum_{i=1}^k  ( \frakgl(V_i) \cdot T )$ is the tangent space to the $G(V_1 \vvirg V_k)$-orbit of $T$. Since $\Sub_{r_1 \vvirg r_k}$ is closed under the action of $G(V_1 \vvirg V_k)$, this tangent space is contained in $T_T \Sub_{r_1 \vvirg r_k}$.

  The space $S^{d_1}E_1 \ootimes S^{d_k}E_k$ is entirely contained in $\Sub_{r_1 \vvirg r_k}$, so it is contained into the tangent space at any of its points. Since $T \in S^{d_1}E_1 \ootimes S^{d_k}E_k$, we obtain that $T_T \Sub_{r_1 \vvirg r_k} \supseteq L(T)$.

For the dimension count, consider a complement $F_j \subseteq V_j$ of $E_j$, so that $V_j = E_j \oplus F_j$ and correspondingly $V_j^* = F_j^\perp \oplus E_j^\perp$. Then
\[
\frakgl(V_j) \simeq E_j \otimes F_j^\perp \ \oplus \ E_j \otimes E_j^\perp \ \oplus \ F_j \otimes F_j^\perp \ \oplus \ F_j \otimes E_j^\perp.
\]
The linear space $\frakgl(V_j) \cdot T$ is the sum of the four linear spaces arising by the contraction against each of the four summands. Since $T \in S^{d_1} E_1 \ootimes S^{d_k} E_j$, we have $E_j^\perp \contract T = 0$; in particular the second and fourth term give no contribution. Moreover $(E_j \otimes F_j^\perp) \contract T$ is contained in the space $S^{d_1} E_1 \ootimes S^{d_k} E_k$, hence its contribution to $L(T)$ is redundant. Finally $\dim ( (F_j \otimes F_j^\perp ) \contract T ) = r_j (n_j-r_j)$ since $T$ is concise in $S^{d_1} E_1 \ootimes S^{d_k} E_j$.

All these terms give rise to terms contained in distinct summands of the direct sum decomposition of $S^{d_1} V_1 \ootimes S^{d_k} V_k$ induced by the splittings $V_i = E_i \oplus F_i$. The dimension calculation follows by adding the dimension of the direct summands.
\end{proof}

Using \autoref{lemma: tangentsubspace}, we can characterize the singular locus of subspace varieties, via a generalization of the argument in the proof of \cite[Theorem 3.1]{landsberg2007ideals}.
\begin{proposition}\label{prop: singularsubspace}
    Let $V_1 \vvirg V_k$ be vector spaces with $\dim V_i = n_i \geq 2$. Let $d_1 \vvirg d_k, r_1 \vvirg r_k$ be non-negative integers such that $(r_1 \vvirg r_k)$ is non-redundant and $r_i < n_i$. Then
\[
\Sing( \Sub_{r_1 \vvirg r_k} ) = \bigcup_{i = 1 \vvirg k} \Sub_{r_1 \vvirg r_i-1 \vvirg  r_k}   .
\]
\end{proposition}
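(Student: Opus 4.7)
I will prove both inclusions using the tangent space estimate from \autoref{lemma: tangentsubspace} combined with the standard Kempf--Weyman parameterization
\[
Z = \bigl\{ (E_1 \vvirg E_k, T) : E_j \in \Gr(r_j, V_j),\ T \in S^{d_1} E_1 \ootimes S^{d_k} E_k \bigr\} \xto{\pi} \Sub_{r_1 \vvirg r_k},
\]
viewing $\Sub_{r_1 \vvirg r_k}$ as its affine cone. The variety $Z$ is a vector bundle over $\prod_j \Gr(r_j, V_j)$ of rank $\prod_j \binom{r_j+d_j-1}{d_j}$, hence smooth irreducible of dimension $D := \prod_{j=1}^k \binom{r_j+d_j-1}{d_j} + \sum_{j=1}^k r_j(n_j-r_j)$. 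The map $\pi$ is proper and surjective, and its fiber over any $T$ with $\dim E_j'(T) = r_j$ for all $j$ is the single point $(E_1'(T) \vvirg E_k'(T), T)$; thus $\pi$ is birational and $\dim \Sub_{r_1 \vvirg r_k} = D$.

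For the inclusion $\Sing(\Sub_{r_1 \vvirg r_k}) \subseteq \bigcup_i \Sub_{r_1 \vvirg r_i - 1 \vvirg r_k}$, I consider $T \in \Sub_{r_1 \vvirg r_k}$ with $\dim E_j'(T) = r_j$ for every $j$. Applying \autoref{lemma: tangentsubspace} with $E_j = E_j'(T)$ yields $L(T) \subseteq T_T \Sub_{r_1 \vvirg r_k}$ with $\dim L(T) = D$. Since $\pi$ is bijective with injective differential (image $L(T)$) near $T$, Zariski's main theorem combined with the normality of $\Sub_{r_1 \vvirg r_k}$ -- a standard consequence of the Kempf--Weyman resolution -- implies that $\pi$ is a local isomorphism near $T$, so $T$ is smooth.

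For the reverse inclusion, I consider $T \in \Sub_{r_1 \vvirg r_i - 1 \vvirg r_k}$ with $s_i := \dim E_i'(T) < r_i$ and $s_j := \dim E_j'(T) \leq r_j$ for the other indices, and aim to show $\dim T_T \Sub_{r_1 \vvirg r_k} > D$. Every extension $E_j \supseteq E_j'(T)$ with $\dim E_j = r_j$ produces a subspace $L(T; E_1 \vvirg E_k) \subseteq T_T \Sub_{r_1 \vvirg r_k}$ via \autoref{lemma: tangentsubspace}. Summing over all valid choices, and using that $\sum_j \frakgl(V_j) \cdot T$ does not depend on the $E_j$'s,
\[
T_T \Sub_{r_1 \vvirg r_k} \;\supseteq\; \bigotimes_{j=1}^k W_j \;+\; \sum_{j=1}^k \frakgl(V_j) \cdot T,
\]
where $W_j = S^{d_j} V_j$ for $j$ with $s_j < r_j$ and $W_j = S^{d_j} E_j'(T)$ otherwise. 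The identity $\sum_{E_j \supseteq E_j'(T)} S^{d_j} E_j = S^{d_j} V_j$ when $s_j < r_j$ holds because every $v \in V_j$ lies in some $r_j$-dimensional extension of $E_j'(T)$ (since $\dim(E_j'(T) + \langle v \rangle) \leq s_j + 1 \leq r_j$), so the sum contains all pure powers $v^{d_j}$, which span $S^{d_j} V_j$.

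\emph{Main obstacle.} The decisive step is verifying that $\dim \bigl( \bigotimes_j W_j + \sum_j \frakgl(V_j) \cdot T \bigr) > D$. In the slot $i$ with $s_i < r_i$, the factor $W_i = S^{d_i} V_i$ is strictly larger than the analogous factor in any single $L(T; E_1 \vvirg E_k)$, thanks to the assumption $r_i < n_i$; the non-redundancy hypothesis (via \autoref{lemma: nonredundancy}) ensures $\Sub_{r_1 \vvirg r_i-1 \vvirg r_k} \subsetneq \Sub_{r_1 \vvirg r_k}$ is a proper subvariety, so the case under consideration is genuinely non-degenerate. A careful bookkeeping -- using the Lie algebra decomposition $\frakgl(V_j) = (E_j \oplus F_j) \otimes V_j^*$ from the proof of \autoref{lemma: tangentsubspace} to track which pieces of $\sum_j \frakgl(V_j) \cdot T$ contribute directions outside $\bigotimes_j W_j$ -- combined with the strict enlargement of the tensor component just described, pushes the total dimension strictly above $D$ and completes the argument.
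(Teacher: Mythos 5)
Your proposal follows essentially the same two-step strategy as the paper: parameterize the affine cone of $\Sub_{r_1 \vvirg r_k}$ by the incidence variety $Z$ (the paper's $\calS$, projectivized), use the tangent-space formula of \autoref{lemma: tangentsubspace} for the forward inclusion, and for the reverse inclusion sum the spaces $L(T;E_1 \vvirg E_k)$ over all extensions $E_j \supseteq E_j'(T)$ to enlarge the tangent space. Two remarks are in order.

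First, in the forward direction you deduce smoothness of $\Sub_{r_1 \vvirg r_k}$ at $T$ from Zariski's main theorem together with normality of $\Sub_{r_1 \vvirg r_k}$, asserting that normality is a ``standard consequence of the Kempf--Weyman resolution.'' For the partially symmetric subspace varieties considered here, this would require verifying the relevant Bott-type cohomology vanishing on $\prod_j \Gr(r_j,V_j)$, which is not established in the paper and is not an off-the-shelf citation. The detour is unnecessary: you already note that $d\pi$ at the unique preimage $(E_1'(T) \vvirg E_k'(T), T)$ is injective with image $L(T)$ of dimension $D = \dim Z = \dim \Sub_{r_1 \vvirg r_k}$; together with properness of $\pi$ and the one-point fiber, a Nakayama argument on local rings (surjectivity of $\frakm_T/\frakm_T^2 \to \frakm_s/\frakm_s^2$ plus finiteness of $\pi^*$) yields $\calO_{\Sub,T} \cong \calO_{Z,s}$ directly, with no normality hypothesis. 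This is the route the paper takes.

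Second, your ``main obstacle'' paragraph leaves the final inequality $\dim T_T \Sub_{r_1 \vvirg r_k} > D$ as a sketch; the paper does the same, so this is acceptable, but note one simplification: since the singular locus is Zariski closed, it suffices to verify singularity at a generic point of each $\Sub_{r_1 \vvirg r_i-1 \vvirg r_k}$, where $\dim E_i'(T) = r_i-1$ and $\dim E_j'(T) = r_j$ for $j \neq i$. In that case only the $i$-th factor of your $\bigotimes_j W_j$ enlarges (to $S^{d_i}V_i$), the other factors stay $S^{d_j}E_j'$, and the bookkeeping against $\sum_j \frakgl(V_j)\cdot T$ reduces to a single-slot computation, exactly as in the paper's ``without loss of generality $T \in \Sub_{r_1-1,r_2 \vvirg r_k}$.'' Your general formulation (allowing several $s_j < r_j$ simultaneously) is correct but makes the dimension count harder than it needs to be.
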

\begin{proof}
Let $\Theta = \bigcup_{i = 1 \vvirg k} \Sub_{r_1 \vvirg r_i-1 \vvirg  r_k}$.

We first show that if $T \in \Sub_{r_1 \vvirg r_k} \setminus \Theta$ then $T$ is smooth. This is done via an incidence correspondence argument. For every $j$ let $\Gr(r_j, V_j)$ be the Grassmannian of $r_j$-planes in $V_j$. Define
\begin{align*}
\calS = &\bar{\{ (E_1 \vvirg E_k, T) : T \in \bbP (S^{d_1} E_1 \ootimes S^{d_k} E_k) \}}
\end{align*}
which is a subvariety of the product $\prod_{j=1}^k \Gr(r_j, V_j) \times \bbP (S^{d_1} V_1 \ootimes S^{d_k}V_k)$.

Let $\pi_{\Gr} : \calS \to \prod_{j=1}^k \Gr(r_j, V_j)$ and $\pi_V : \calS \to \bbP (S^{d_1} V_1 \ootimes S^{d_k}V_k)$ be the two projections. The map $\pi_{\Gr}$ defines a projectivized vector bundle on $\prod_{j=1}^k \Gr(r_j, V_j)$: the fiber over $(E_1 \vvirg E_k)$ is $\bbP (S^{d_1} E_1 \ootimes S^{d_k} E_k)$. In particular $\calS$ is smooth and it has dimension
\begin{align*}
\dim \calS = &  \dim \textstyle \prod_{j=1}^k \Gr(r_j, V_j) + \dim \bbP S^{d_1} \bbC^{r_1} \ootimes S^{d_k} \bbC^{r_k}\\
& =\textstyle \sum_{j=1}^k r_j(n_j-r_j) + \prod_{j=1}^k \binom{r_j+d_j-1}{d_j} -1.
\end{align*}
By definition, the projection $\pi_V$ surjects onto $\Sub_{r_1 \vvirg r_k}$. If $T \in \Sub_{r_1 \vvirg r_k} \setminus \Theta$, the fiber $\pi_V^{-1}(T)$ is (as a set) the single point $(E_1 \vvirg E_k, T)$ where
\[
E_i = \image ( T: (S^{d_i-1} V_i^* \otimes \textstyle \bigotimes_{j\neq i} S^{d_j}V_j^*) \to V_i).
\]
One can compute the differential of $\pi_V$ at this point and verify that it is injective; this calculation is similar to the one in the proof of \autoref{lemma: tangentsubspace}; see also the proof of \cite[Theorem 3.1]{landsberg2007ideals}. This guarantees that the points of $\Sub_{r_1 \vvirg r_k} \setminus \Theta$ are smooth, namely $\Sing( \Sub_{r_1 \vvirg r_k}) \subseteq \Theta$. In fact, a dimension count shows that the affine tangent space $T_T \Sub_{r_1 \vvirg r_k}$ coincides with the space $L(T)$ of \autoref{lemma: tangentsubspace}.

To show the reverse inclusion suppose $T \in \Theta$ and without loss of generality assume $T \in \Sub_{r_1-1,r_2 \vvirg r_k}$; in particular $r_1 \geq 2$, otherwise $r_1-1 = 0$ and $\Sub_{r_1-1,r_2 \vvirg r_k} = \emptyset$; moreover, because of the non-redundancy condition, $\Sub_{r_1-1,r_2 \vvirg r_k} \subsetneq \Sub_{r_1,r_2 \vvirg r_k}$. Let $E_j,E_j'$ as in \autoref{lemma: tangentsubspace}; in particular $E_1$ can be chosen among all subspaces of dimension $r_1$ containing $E_1'$. All such choices give rise to a space as the one described in \autoref{lemma: tangentsubspace} contained in $T_{T} \Sub_{r_1 \vvirg r_k}$. This guarantees $S^{d_1} V_1 \otimes S^{d_2}E_2 \ootimes S^{d_k}E_k \subseteq T_{T} \Sub_{r_1 \vvirg r_k}$ for all such choices of $E_2 \vvirg E_k$. A dimension count, similar to the one in \autoref{lemma: tangentsubspace} shows that the tangent space at $T$ has dimension larger than at a generic point of $\Sub_{r_1,r_2 \vvirg r_k}$. We deduce that $T$ is singular and this concludes the proof.
\end{proof}

We use the method outlined in \autoref{lem:sing} to characterize the linear preserver of subspace varieties:
\begin{theorem}\label{thm: subspace vars}
    Let $V_1 \vvirg V_k$ be vector spaces with $\dim V_i = n_i$. Let $d_1 \vvirg d_k,r_1 \vvirg r_k$ be integers such that $(r_1 \vvirg r_k)$ is non-redundant and $r_i < n_i$.  Then $G_{\Sub_{r_1 \vvirg r_k}} = G(V_1 \vvirg V_k) \rtimes \tilde{\frakS}$ where $\tilde{\frakS}$ is the subgroup of $\frakS_k$ permuting factors with the same dimension and embedded in the same degree.
\end{theorem}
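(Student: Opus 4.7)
The plan is to combine the singular-locus technique of \autoref{lem:sing} with the normalizer principle of \autoref{prop: normalizer}. The easy containment $G(V_1 \vvirg V_k) \rtimes \tilde{\frakS} \subseteq G_{\Sub_{r_1 \vvirg r_k}}$ is immediate, since the rank conditions on the flattenings $T : V_i^* \to S^{d_i-1} V_i \otimes \bigotimes_{j \neq i} S^{d_j} V_j$ that define $\Sub_{r_1 \vvirg r_k}$ are invariant under the natural action of $\GL(V_1) \ttimes \GL(V_k)$ and under permutations in $\tilde{\frakS}$ which exchange factors with matching $(n_i,d_i)$.

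By \autoref{prop: normalizer}, the reverse containment follows once we establish $G_{\Sub_{r_1 \vvirg r_k}}^\circ = G(V_1 \vvirg V_k)$. The inclusion $G(V_1 \vvirg V_k) \subseteq G_{\Sub_{r_1 \vvirg r_k}}^\circ$ is clear since $G(V_1 \vvirg V_k)$ is connected and contained in $G_{\Sub_{r_1 \vvirg r_k}}$. The opposite inclusion is the heart of the proof, and we establish it by induction on $r_1 + \cdots + r_k$. The base case $r_1 = \cdots = r_k = 1$ identifies $\Sub_{1 \vvirg 1}$ with the Segre-Veronese variety via \autoref{rmk: rs one}, so the conclusion follows from \autoref{prop:seg_ver}.

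For the inductive step, fix $g \in G_{\Sub_{r_1 \vvirg r_k}}^\circ$. By \autoref{prop: singularsubspace}, the singular locus $\Sing(\Sub_{r_1 \vvirg r_k})$ decomposes as the union of the distinct irreducible subspace varieties $Y_i := \Sub_{r_1 \vvirg r_i-1 \vvirg r_k}$ over the indices $i$ with $r_i \geq 2$. Since $g$ preserves this singular locus by \autoref{lem:sing} and the action of $G_{\Sub_{r_1 \vvirg r_k}}$ on the finite set of irreducible components factors through the component group, connectedness forces $g$ to preserve each $Y_i$ individually. The inclusion $G_{\Sub_{r_1 \vvirg r_k}} \hookrightarrow G_{Y_i}$ of algebraic subgroups of $\GL(S^{d_1} V_1 \ootimes S^{d_k} V_k)$ maps identity component into identity component, so $g \in G_{Y_i}^\circ$. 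The inductive hypothesis applied to each $Y_i$ then yields $g \in G(V_1 \vvirg V_k)$, as required.

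The main subtlety is that the decreased tuple $(r_1 \vvirg r_i-1 \vvirg r_k)$ may fail the non-redundancy condition required by \autoref{prop: singularsubspace} and by the inductive hypothesis, since the bound in the non-redundancy condition for an index $j \neq i$ tightens when $r_i$ decreases. Whenever this occurs, the surplus rank is vacuous on the flattening of the relevant factor, so $Y_i$ coincides with a subspace variety $\Sub_{s_1 \vvirg s_k}$ for a strictly smaller non-redundant tuple; the inductive hypothesis then applies to this reduced form. Once $G_{\Sub_{r_1 \vvirg r_k}}^\circ = G(V_1 \vvirg V_k)$ is established, \autoref{prop: normalizer} delivers the full statement.
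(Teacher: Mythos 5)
Your proposal is correct and follows the same overall strategy as the paper's proof: reduce via the singular-locus characterization of \autoref{prop: singularsubspace}, induct down to the base case $\Sub_{1 \vvirg 1} = \nu_{d_1 \vvirg d_k}(\bbP V_1 \ttimes \bbP V_k)$, and invoke \autoref{prop:seg_ver}, handling redundant intermediate tuples by replacing them with equivalent non-redundant ones. Where you differ is in how the induction is closed out. The paper's argument (which is terse) is most naturally read as iterating the singular-locus operation: $G_X \subseteq G_{\Sing X} \subseteq G_{\Sing^{(2)}X} \subseteq \cdots$ until after finitely many steps one lands on the irreducible variety $\Sub_{1 \vvirg 1}$, so the question of how elements of $G_X$ permute the irreducible components of $\Sing X$ never arises. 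You instead work at the level of identity components: you first invoke \autoref{prop: normalizer} to reduce the theorem to the claim $G_X^\circ = G(V_1 \vvirg V_k)$, and then observe that $G_X^\circ$, being connected, must act trivially on the set of irreducible components of $\Sing X$, so it lands in each $G_{Y_i}^\circ$ and the induction closes. Both routes work; yours requires the extra machinery of \autoref{prop: normalizer} but in exchange gives a cleaner inductive claim, while the paper's iteration is more elementary but implicit.

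One sentence in your proof is misphrased: you write ``The inclusion $G_{\Sub_{r_1 \vvirg r_k}} \hookrightarrow G_{Y_i}$ of algebraic subgroups \dots maps identity component into identity component.'' There is no containment $G_{\Sub_{r_1 \vvirg r_k}} \subseteq G_{Y_i}$ in general, since elements of $G_X$ outside the identity component may permute the $Y_i$'s. What you have established (and use) is the containment $G_X^\circ \subseteq G_{Y_i}$, and since $G_X^\circ$ is connected it lies inside $G_{Y_i}^\circ$. The conclusion $g \in G_{Y_i}^\circ$ is correct once the sentence is corrected to refer to $G_X^\circ$ rather than all of $G_X$.
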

\begin{proof}
By \autoref{prop: singularsubspace}, we have
\[
\Sing( \Sub_{r_1 \vvirg r_k} ) = \bigcup_{i = 1 \vvirg k} \Sub_{r_1 \vvirg r_i-1 \vvirg  r_k}   .
\]
Because of the non-redundancy condition, the union on the right-hand side is a proper subvariety of $\Sub_{r_1 \vvirg r_k}$. Moreover, after possibly replacing each $(r_1 \vvirg r_{i-1},r_i-1,r_{i+1} \vvirg r_k)$ with a non-redundant one, the right hand side can be written as a union over a set of non-redundant multilinear ranks. The result then follows by an induction argument and \autoref{prop:seg_ver} since $\Sub_{1 \vvirg 1} = \nu_{d_1 \vvirg d_k}(\bbP V_1 \ttimes \bbP V_k)$.
\end{proof}

We conclude this section observing that the hypothesis of \autoref{thm: subspace vars} is not tight. For instance, in the case of tensors of order $3$, a similar argument as above shows that if $(r_1,r_2,r_3)$ are non-redundant multilinear ranks with $n_1 = r_1$, $r_i < n_i$ for $i=2,3$, then the preserver of $\Sub_{r_1,r_2,r_3}$ is the expected one. More generally, under some non-trivial arithmetic condition on the $n_i$'s and $d_j$'s, one may allow one of the multilinear ranks to be equal to the dimension of the corresponding tensor factors. We expect this to be sharp. For instance, in the three factors case, if $r_1 = n_1$, $r_2 = n_2$, then the preserver contains $G(V_1 \otimes V_2, V_3)$ which is larger than expected.

\section{Preservers of partition varieties}\label{sec:part}
For a collection of bipartitions $\calB$ of $[k]$, let $\setft{Part}(\calB) \subseteq \bbP(V_1 \otimes \dots \otimes V_k)$ be the set of tensors $T$ for which $T_I : \bigotimes_{i \in I} V_i^* \rightarrow \bigotimes_{j \in J} V_j$ has rank $1$ for some $\{I,J\} \in \calB$. We call $\setft{Part}(\calB)$ the \textit{partition variety} of $\calB$. Special cases of partition varieties include some well-studied varieties of tensors. For example, $\setft{Part}(\{\{\{i\},[k]\setminus \{i\}\}: i \in [k]\})$ is the set of tensors of \textit{slice rank} one, in the sense of \cite{SawTao:NoteSliceRank}. Also, $\setft{Part}(\{\text{all bipartitions of $[k]$}\})$ is the set of tensors of \textit{partition rank} one, in the sense of \cite{naslund2020partition}; these are also known as \textit{biseparable tensors}.

In this section we characterize the linear preservers of partition varieties. We define the intersection of two set-partitions $Q=\{Q_1,\dots, Q_{p}\}$ and $R=\{R_1,\dots, R_q\}$ of $[k]$ to be the set-partition $Q \cap R=\{Q_i \cap R_j : i \in [p], j \in [q]\}$, which is itself a set-partition of $[k]$. For a collection of bipartitions $\calB$, let $\cap \calB$ be their intersection. As an example, note that $\calB$ is separating for $[k]$ if and only if $\cap \calB$ is the set partition consisting of $k$ singletons.

\begin{theorem}
Let $\calB$ be a collection of bipartitions of $[k]$, let $\{ T_1 \vvirg T_m\} = \cap \calB$. Let $V_1 \vvirg V_k$ be vector spaces and, for every $p$, let $\Delta_p = [[ \dim V_i : i \in T_p ]]$ be the multiset of dimensions of the factors occurring in $T_p$. Then
\[
G_{\setft{Part}(\calB)}=G\big(\otimes_{i_1\in T_1} V_{i_1},\dots, \otimes_{i_m \in T_m} V_{i_m}\big)\rtimes \hat{\frakS},
\]
where $\hat{\frakS}$ is the stabilizer of $\{ \Delta_1 \vvirg \Delta_m\}$ with respect to the induced action of $\frakS_k$ on the set of all multisets with elements in $\{ \dim V_1 \vvirg \dim V_k\}$.
\end{theorem}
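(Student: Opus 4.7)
The plan is to reduce the theorem to the case where $\calB$ is separating for $[k]$ via a regrouping of tensor factors. Identify $V_1 \ootimes V_k$ with $W_1 \ootimes W_m$, where $W_p = \bigotimes_{i \in T_p} V_i$. Since $\{T_1,\dots,T_m\} = \cap\calB$ is the common refinement of the bipartitions in $\calB$, every $\{I,J\} \in \calB$ has both $I$ and $J$ as unions of blocks $T_p$, so $\calB$ induces a separating collection $\calB'$ of bipartitions of $[m]$; under the identification, $\setft{Part}(\calB) = \setft{Part}(\calB')$. This reduces the task to computing $G_{\setft{Part}(\calB')}$ inside $\GL(W_1 \ootimes W_m)$.

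In the separating case, the irreducible components of $\setft{Part}(\calB')$ are the two-factor Segre varieties $\Sigma_\beta := \bbP(\bigotimes_{p \in A} W_p) \times \bbP(\bigotimes_{p \in B} W_p)$, one for each $\beta=\{A,B\} \in \calB'$. The core geometric input I would establish is that for any two bipartitions $\beta_1, \beta_2 \in \calB'$, the intersection $\Sigma_{\beta_1} \cap \Sigma_{\beta_2}$ equals the multi-partite Segre of the common refinement $\beta_1 \wedge \beta_2$. Iterating over all $\beta \in \calB'$ and using that $\cap \calB'$ is the singleton partition of $[m]$, this yields $\bigcap_{\beta \in \calB'} \Sigma_\beta = \bbP W_1 \ttimes \bbP W_m$, the full $m$-partite Segre.

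The inclusion ``$\subseteq$'' then follows: any $g \in G_{\setft{Part}(\calB')}$ permutes the irreducible components $\{\Sigma_\beta\}$, hence preserves their common intersection $\bbP W_1 \ttimes \bbP W_m$. By \autoref{prop:seg_ver}, the preserver of this $m$-partite Segre is $G(W_1,\dots,W_m) \rtimes \frakS^{(m)}$, where $\frakS^{(m)} \subseteq \frakS_m$ permutes the $W_p$'s of equal dimension; since $g$ must additionally permute the components of $\setft{Part}(\calB')$, its permutation part translates back under the identification to an element of the subgroup $\hat{\frakS} \subseteq \frakS_k$ of the theorem, namely a permutation of $[k]$ shuffling the blocks $T_p$ consistently with the multiset data $\{\Delta_p\}$. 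The reverse inclusion is direct: $G(W_1,\dots,W_m)$ fixes each $\Sigma_\beta$, and every element of $\hat{\frakS}$ permutes the blocks $T_p$ and hence the components $\Sigma_\beta$.

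The main obstacle is the intersection identity $\Sigma_{\beta_1} \cap \Sigma_{\beta_2} = \Sigma_{\beta_1 \wedge \beta_2}$, on which the whole argument rests. The forward inclusion is elementary, but the reverse requires that a tensor $T$ simple with respect to both $\beta_1$ and $\beta_2$ admit a decomposition along their common refinement: starting from $T = u_I \otimes v_J$ along $\beta_1$ and imposing rank-one of the $\beta_2$-flattening, one extracts a further factorization $u_I = x_{I \cap I'} \otimes y_{I \cap J'}$ (and symmetrically for $v_J$) via a rank-one analysis. A secondary subtlety is the clean translation between the abstract permutation group in \autoref{prop:seg_ver} acting on the $W_p$'s and the combinatorial $\hat{\frakS} \subseteq \frakS_k$ in the statement, which amounts to checking that stabilizing the multiset $\{\Delta_1,\dots,\Delta_m\}$ corresponds exactly to those permutations of $[k]$ that lift to automorphisms of the regrouped tensor product $W_1 \ootimes W_m$.
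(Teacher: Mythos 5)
Your proof is correct and reaches the same key intermediate object as the paper---the common intersection $\bigcap_\beta \Sigma_\beta = \bbP\big(\bigotimes_{i\in T_1} V_i\big) \ttimes \bbP\big(\bigotimes_{i\in T_m} V_i\big)$---but the mechanism you use to show $G_X$ preserves it is genuinely different. The paper's proof stays inside the framework of \autoref{sec:sing}: since each irreducible component $\Sigma_\beta$ is smooth, $\Sing(X)$ is the union of pairwise intersections, and iterating $\Sing^{(\ell-1)}(X)$ collapses to the full intersection, which by \autoref{lem:sing} is automatically preserved by $G_X$. You instead invoke the more elementary fact that any $g\in G_X$, being an algebraic automorphism of $\bbP V$ restricting to a bijection of $X$, must permute the irreducible components of $X$ and hence fix their common intersection; this bypasses singular loci entirely. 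Your route is shorter and arguably cleaner, and your regrouping of factors into $W_p = \bigotimes_{i\in T_p} V_i$ at the outset makes the reduction to the separating case explicit where the paper leaves it implicit. The trade-off is that the paper's version showcases the singular-locus technique which it reuses systematically elsewhere (e.g.\ \autoref{prop:bipartite}, \autoref{thm: subspace vars}, the onion of \autoref{sec:quant}), whereas your argument is one-off. Both treatments handle the intersection identity $\Sigma_{\beta_1}\cap\Sigma_{\beta_2} = \Sigma_{\beta_1\wedge\beta_2}$ at about the same level of detail---the paper appeals to ``an induction argument,'' while you correctly flag it as the point requiring the rank-one factorization analysis---and both leave the translation between the permutation group on the $W_p$'s and the combinatorial subgroup $\hat{\frakS}\subseteq\frakS_k$ somewhat informal, so you are not missing anything the paper supplies.
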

\begin{proof}
Let $X=\setft{Part}(\calB)$ and let $\calB=\{\{I_1^{(1)}, I_1^{(2)}\},\dots, \{I_{\ell}^{(1)},I_{\ell}^{(2)}\}\}$. Then the Segre reembeddings of $\bbP(\otimes_{j \in I_i^{(1)}} V_j)\times \bbP(\otimes_{j \in I_i^{(2)}} V_j)$ are the irreducible components of $X$. Since each component is smooth, the singular locus of $X$ is the union of pairwise intersections:
\[
\Sing(X)=\bigcup_{\substack{i,i' \in [\ell] \\ i<i'}} \left[(\bbP(\otimes_{j \in I_i^{(1)}} V_j)\times \bbP(\otimes_{j \in I_i^{(2)}} V_j)) \cap (\bbP(\otimes_{j \in I_{i'}^{(1)}} V_j)\times \bbP(\otimes_{j \in I_{i'}^{(2)}} V_j))\right].
\]
Note that each term in the union is again irreducible and smooth. Via an induction argument, one obtains
\begin{align*}
\Sing^{(\ell-1)}(X)&=\bigcap_{i \in [\ell]} (\bbP(\otimes_{j \in I_i^{(1)}} V_j)\times \bbP(\otimes_{j \in I_i^{(2)}} V_j))\\
&= \bbP \big(\otimes_{i_1 \in T_1} V_{i_1}\big)\times \dots \times \bbP \big(\otimes_{i_m \in T_m} V_{i_m}\big).
\end{align*}
By~\autoref{lem:sing}, $G_X$ must preserve $\Sing^{(\ell-1)}(X)$. By \autoref{prop:seg_ver}, we have
\[
G_X \subseteq G_{\Sing^{(\ell-1)}(X)} = G\big(\otimes_{i_1\in T_1} V_{i_1},\dots, \otimes_{i_m \in T_m} V_{i_m}\big)\rtimes \hat{\frakS}.
\]
The reverse inclusion is clear.
\end{proof}

We obtain the following corollary, which in particular applies to the set of tensors of slice rank one and the one of tensors of partition rank one.
\begin{corollary}\label{cor:part}
Let $\calB$ be a collection of bipartitions separating for $[k]$. Let $V_1 \vvirg V_k$ be vector spaces. Then $G_{\Part(\calB)} =G(V_1 \vvirg V_k) \rtimes \frakS$, where $\frakS \subseteq \frakS_k$ is the subgroup permuting factors of the same dimension.

In particular, this is the linear preserver for the variety $\Part(   \{ \{i\},\{i\}^c\}\} : i \in [k])$ of tensors of slice rank one and of the variety $\Part( \textup{all bipartitions of }[k])$ of tensors of partition rank one.
\end{corollary}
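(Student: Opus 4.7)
The plan is to derive this as a direct specialization of the preceding theorem. The key observation is an elementary equivalence: a collection $\calB$ of bipartitions is separating for $[k]$ if and only if the set-partition $\cap \calB$ consists of the $k$ singletons $\{1\}, \dots, \{k\}$. Indeed, $\cap \calB$ fails to be the singleton partition exactly when some pair $i \neq j$ lies in the same block of every bipartition of $\calB$, which is the negation of the separating condition.

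With this in hand, I would unfold the equivalence to get $m = k$ and $T_p = \{p\}$ for each $p \in [k]$. Substituting into the preceding theorem, each tensor factor $\otimes_{i \in T_p} V_i$ collapses to $V_p$, and each multiset $\Delta_p$ reduces to $[[\dim V_p]]$. Consequently, the subgroup $\hat{\frakS} \subseteq \frakS_k$ stabilizing the multiset $\{\Delta_1, \dots, \Delta_k\}$ under the natural $\frakS_k$-action is precisely the subgroup $\frakS$ permuting those indices $i, j$ for which $\dim V_i = \dim V_j$. The preceding theorem therefore yields $G_{\Part(\calB)} = G(V_1, \dots, V_k) \rtimes \frakS$.

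For the two particular cases highlighted in the statement, it remains only to verify the separating property. In the slice rank one case, given any distinct $i, j \in [k]$, the bipartition $\{\{i\}, [k] \setminus \{i\}\}$ belongs to the prescribed collection and separates $i$ from $j$. For partition rank one, the collection of \emph{all} bipartitions of $[k]$ trivially contains one placing any prescribed pair $i, j$ in distinct blocks. Both collections are therefore separating, and the general assertion applies. No step here is an obstacle in the usual sense; the only mild bookkeeping is checking that the general $\hat{\frakS}$ defined in the preceding theorem really does reduce to $\frakS$ when all $T_p$ are singletons, but this is immediate from unpacking the definition.
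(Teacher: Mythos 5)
Your derivation of the corollary from the preceding theorem is logically correct and is the route the paper implicitly intends: separating $\calB$ is equivalent to $\cap\calB$ consisting of singletons (the paper states this explicitly), which collapses each block $T_p$ to $\{p\}$ and each $\Delta_p$ to the singleton multiset $[[\dim V_p]]$, whence $\hat\frakS$ reduces to the dimension-preserving subgroup $\frakS$. The verification that the two highlighted collections are separating is also correct.

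However, be aware that both your argument and the corollary as stated inherit an overstatement from the preceding theorem. The theorem's final step ``the reverse inclusion is clear'' is not in fact clear for the semidirect factor: an element $\sigma\in\hat\frakS$ preserves $\Part(\calB)$ only if $\sigma$ stabilizes the collection $\calB$ itself, since $\sigma\cdot\Part(\calB)=\Part(\sigma\cdot\calB)$ and the bipartitions in $\calB$ are recoverable from the irreducible components of $\Part(\calB)$. This fails for a general separating collection. For instance, take $k=4$, all $V_i=\bbC^2$, and $\calB=\{\{\{1,2\},\{3,4\}\},\{\{1,3\},\{2,4\}\}\}$; one checks directly that $\cap\calB$ is the singleton partition, so $\calB$ is separating, and the corollary would assert $\frakS_4\subseteq G_{\Part(\calB)}$. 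Yet the transposition $(3\,4)$ sends $\calB$ to $\{\{\{1,2\},\{3,4\}\},\{\{1,4\},\{2,3\}\}\}$, and $\Part(\calB)$ has an irreducible component (tensors with rank-one $\{1,3\}$-flattening) that is not a component of $(3\,4)\cdot\Part(\calB)$; hence $(3\,4)\notin G_{\Part(\calB)}$. The correct statement should replace $\frakS$ by the subgroup of dimension-preserving permutations that also stabilize $\calB$. The two highlighted special cases are unaffected: the slice-rank-one and partition-rank-one collections are $\frakS_k$-invariant by construction, so there $\frakS_\calB=\frakS$ and the asserted preserver is correct. Since your proof reproduces exactly the specialization the paper intends, the gap lies upstream in the theorem rather than in your application of it, but it is worth flagging when stating the result in full generality.
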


\section{Waring rank preservers}\label{sec:waring}

In this section we characterize the preserver of $\sigma_r^{\circ}(\nu_d(\bbP V))$ whenever $d \geq 2r-1$, see \autoref{thm:waring}. This set is not Zariski closed, and correspondingly the methods in this section are more algebraic and combinatorial in nature, rather than purely geometric.~\autoref{thm:waring} should be compared to~\autoref{prop:rnc}, which shows that in the case of secants to the rational normal curve, $\sigma_r(\nu_d(\bbP^1))$ has the expected preserver whenever $d \geq 2r$. See also~\autoref{prop:small_waring}, which characterizes the preservers of border Waring rank $2$ and $3$.

We will use the following technical result.

\begin{lemma}\label{lemma:secant_plane_line}
Let $r$ and $d$ be integers satisfying $d \geq 2r-1$, let $V$ be a vector space with $\dim V = n \geq 2$, and let $L\subseteq \bbP S^d V $ be a line such that $L\subseteq \sigma_r^{\circ}(\nu_d(\bbP V))$. Then $L$ is contained in an $r$-secant plane to $\nu_d(\bbP V)$.
\end{lemma}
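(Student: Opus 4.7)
The plan is to combine apolarity with classical identifiability results to show that the apolar scheme is constant along $L$, which forces $L$ to sit inside a single $r$-secant plane. By identifiability for Waring rank in the range $d\geq 2r-1$, every form of Waring rank exactly $r$ in $S^d V$ has a unique reduced apolar set $\Gamma(f) = \{[v_1(f)],\ldots,[v_r(f)]\}\subseteq\bbP V$, with $f$ lying in the $r$-secant plane $W_{\Gamma(f)} := \langle v_1^d,\ldots,v_r^d\rangle$. First I would reduce to the case that the generic point of $L$ has rank exactly $r$: if instead every point of $L$ has rank at most $r-1$, then $L\subseteq\sigma_{r-1}^\circ(\nu_d(\bbP V))$, and since $d\geq 2r-1\geq 2(r-1)-1$ an induction on $r$ puts $L$ in an $(r-1)$-secant plane, hence in an $r$-secant plane. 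The assignment $f\mapsto\Gamma(f)$ then defines a morphism from the open subset $U\subseteq L$ of rank-$r$ forms to $\Hilb^r(\bbP V)$, which extends to a morphism $\bar\Gamma\colon L\cong\bbP^1\to\Hilb^r(\bbP V)$ because $L$ is smooth and the target is projective. The lemma is equivalent to the statement that $\bar\Gamma$ is constant.

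Next I would fix two rank-$r$ forms $f_0,f_1\in L$ spanning $L$, with apolar sets $\Gamma_0,\Gamma_1$, and aim to show $\Gamma_0=\Gamma_1$. The key computation is the identity $(f_i^\perp)_r=(I_{\Gamma_i})_r$, both of dimension $\binom{n+r-1}{r}-r$; this follows from the linear independence of the $(d-r)$-th powers of the apolar points in $S^{d-r}V$, which holds because $d-r\geq r-1$. Any $\alpha\in(f_0^\perp)_r\cap(f_1^\perp)_r$ satisfies $\alpha\contract(f_0+tf_1)=0$ for every $t\in\bbC$, so this intersection is contained in $(f_t^\perp)_r=(I_{\Gamma(t)})_r$ for generic $t$. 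When $n\geq 3$ one has $|\Gamma_0\cup\Gamma_1|\leq 2r\leq\binom{n+r-1}{r}$, and for generic $f_0,f_1\in L$ the set $\Gamma_0\cup\Gamma_1$ imposes independent conditions on degree-$r$ forms. Then $(f_0^\perp)_r\cap(f_1^\perp)_r=(I_{\Gamma_0\cup\Gamma_1})_r$ cuts out exactly $\Gamma_0\cup\Gamma_1$, so $\Gamma(t)\subseteq\Gamma_0\cup\Gamma_1$ for generic $t$; since $\Gamma(t)$ varies algebraically in $t$ but lands in the finite set of $r$-element subsets of $\Gamma_0\cup\Gamma_1$, it is constant, equal to $\Gamma_0$, whence $L\subseteq W_{\Gamma_0}$.

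The main obstacle is the boundary case $n=2$: on $\bbP^1$ no nonzero degree-$r$ form vanishes on more than $r$ points, so when $\Gamma_0\neq\Gamma_1$ the intersection $(I_{\Gamma_0\cup\Gamma_1})_r$ is zero and the above approach gives no information. I would resolve this case via a Bezout argument on the Sylvester map. For $n=2$ the space $(f^\perp)_r$ is one-dimensional and spanned by a degree-$r$ binary form $q_f$ whose roots are exactly $\Gamma(f)$; the morphism $L\cong\bbP^1\to\bbP S^rV^*\cong\bbP^r$ sending $f$ to $[q_f]$ has image a rational curve $C$ of some degree $\delta\geq 0$. The hypothesis $L\subseteq\sigma_r^\circ$ forces every $q_f$ to have $r$ distinct roots, so $C$ is disjoint from the discriminant hypersurface $\{\Delta=0\}\subseteq\bbP^r$, which has degree $2(r-1)$. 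Bezout's theorem then forces $\delta\cdot 2(r-1)=0$, and since $r\geq 2$ we conclude $\delta=0$; hence $C$ is a single point, $\bar\Gamma$ is constant, and $L\subseteq W_{\Gamma_0}$ as required.
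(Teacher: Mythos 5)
Your proposal takes a genuinely different route from the paper. The paper's proof is a direct linear-algebra argument: choose dual functionals $\phi_1,\dots,\phi_{r+1}$ to the $r+1$ linearly independent powers $v_1^r,\dots,v_r^r,w_1^r$, contract against $h_t = f+tg$, and observe that for generic $t$ one obtains $r+1$ linearly independent order-$r$ partial derivatives, forcing $\mathrm{rank}(h_t)>r$. This is pointwise and avoids any global analysis. Your approach via apolar schemes and the extended morphism $\bar\Gamma\colon L\to\Hilb^r(\bbP V)$ is more conceptual, and your reduction steps and the identity $(f^\perp)_r = (I_{\Gamma(f)})_r$ (via linear independence of $(d-r)$-th powers when $d-r\geq r-1$) are correct. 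However, there are two genuine gaps.

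For $n\geq 3$, the step from ``$\Gamma_0\cup\Gamma_1$ imposes independent conditions on degree-$r$ forms'' to ``$(I_{\Gamma_0\cup\Gamma_1})_r$ cuts out exactly $\Gamma_0\cup\Gamma_1$'' is not a valid implication, and the first claim is itself unjustified. Independent conditions means the evaluation map is surjective, not that the degree-$r$ piece of the ideal has zero locus equal to the points: for instance, five general points in $\bbP^2$ impose independent conditions on conics, but the conics through them cut out an entire conic curve. More importantly, the configuration $\Gamma_0\cup\Gamma_1$ is dictated by the fixed line $L$; you cannot choose $f_0,f_1$ to make it general. If, say, $\Gamma_0$ lies on a line $\ell\subseteq\bbP V$ and $\Gamma_1\cap\ell\neq\emptyset$, then $\Gamma_0\cup\Gamma_1$ has at least $r+1$ collinear points, so every degree-$r$ form in $(I_{\Gamma_0\cup\Gamma_1})_r$ vanishes identically on $\ell$, and the zero locus contains $\ell$; the inclusion $(I_{\Gamma_0\cup\Gamma_1})_r\subseteq (I_{\Gamma(t)})_r$ then gives no control over $\Gamma(t)$. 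You would need an additional argument ruling out such degenerate positions, which is not supplied.

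For $n=2$, the Bezout argument hinges on the claim ``The hypothesis $L\subseteq\sigma_r^\circ$ forces every $q_f$ to have $r$ distinct roots.'' This is clear at points $f\in L$ of rank exactly $r$, but the morphism is only rationally defined there, and at a point $f_*\in L$ of rank $r'<r$ the value $\bar\Gamma(f_*)$ is a limit. Concretely, $\bar\Gamma(f_*)$ lies in $(f_*^\perp)_r$, which has dimension $r-r'+1>1$, and the limit form is of the form $g_1\cdot h$ where $g_1$ is the minimal apolar generator (squarefree, of degree $r'$) and $h$ has degree $r-r'$; nothing a priori prevents $h$ from having a repeated root or sharing a root with $g_1$. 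The intuition that a collision of apolar points forces $\mathrm{rank}(f_*)>r$ only applies when the coefficients in the Waring decomposition blow up; when they stay bounded, the collision simply drops the rank of $f_*$ and the limit $\bar\Gamma(f_*)$ can in principle land on the discriminant. Ruling this out requires a degeneration analysis that is not present, and that is precisely the kind of boundary behavior the paper's direct construction of $r+1$ independent partials sidesteps entirely.
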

\begin{proof}
If $d = 2r-1$ and $n=2$ there is no line $L$ satisfying the hypothesis. Indeed, in this case $\sigma_r(\nu_d(\bbP^1))=\bbP S^d \bbC^2$, and $\sigma_r^{\circ}(\nu_d(\bbP^1))\subseteq \bbP S^d \bbC^2$ is the complement of a hypersurface, see e.g. \cite[Proposition 19]{BucHanMelTei:LocusPtsHighRank}. In particular, every line $L \subseteq \bbP S^d \bbC^2$ intersects such hypersurface. Thus, we may assume that either (i) $d\geq 2r$, or (ii) $d=2r-1$ and $L$ is not contained in a copy of $\bbP S^d \bbC^2$ in $\bbP S^d V$.

Suppose $L = \langle f,g \rangle$. Without loss of generality, the Waring rank of $f$ is exactly $r$, so that
\begin{align*}
f &= v_1^d+\dots+v_r^d, \\
g &= w_1^d+\dots+w_{r'}^d
\end{align*}
for some $r'\leq r$ and $v_1,\dots, v_r,w_1,\dots, w_{r'} \in V\setminus\{0\}$. To complete the proof, we will show that $[w_i] \in \{[v_1],\dots,[v_r]\}$ for all $i =1 \vvirg r'$; this guarantees $L \subseteq \langle v_1^d \vvirg v_r^d \rangle$ which is an $r$-secant plane to $\nu_d(\bbP V)$.

Suppose by contradiction that $[w_1] \notin \{[v_1],\dots,[v_r]\}$. It is easy to see that $v_1^r,\dots,v_r^r,w_1^r $ are linearly independent as elements of $S^r V$, see e.g. \cite[Exercise 3.1]{harris2013algebraic}. Let $\phi_1,\dots, \phi_{r+1}\in S^r(V^*)$ be elements such that
\[
\phi_i \contract v_j^r = \delta_{ij}, \qquad  \phi_i \contract w_1^{r} = \left\{
\begin{array}{ll}
0 & \text{if $i \neq r+1$} \\
1 & \text{if $i = r+1$}
\end{array} \right.
\]
For each $t \in \bbC$ let
\[
h_t= f + t g = v_1^d+\dots+v_r^d+t(w_1^d+\dots+w_{r'}^d).
\]
Therefore $h_t \in L \subseteq \sigma_r^{\circ}(\nu_d(\bbP V))$ for every $t \in \bbC$, and
\begin{align*}
\phi_1 \contract h_t &= v_1^{d-r}+t y_1\\
\phi_2 \contract h_t &= v_2^{d-r}+t y_2\\
&\;\;\vdots\\
\phi_r \contract h_t &=v_r^{d-r}+ty_r\\
\phi_{r+1} \contract h_t &= t(w_1^{d-r}+y_{r+1})
\end{align*}
for some elements $y_1,\dots, y_{r+1} \in S^{d-r}(V)$ depending on the contraction of the $\phi_j$ against $g$.

Note that $v_1^{d-r},\dots, v_r^{d-r},w_1^{d-r}$ are linearly independent. To prove this statement, we proceed similarly to before. In case (i), that is $d \geq 2r$, then $d-r \geq r$ and the statement follows again from \cite[Exercise 3.1]{harris2013algebraic}. In case (ii), $d = 2r-1$ and $L$ is not contained in a copy of $\bbP S^d \bbC^2$; this guarantees that $\{v_1,\dots, v_r,w_1\}$ are not contained in a $\bbP^1 \subseteq \bbP V$, and in this case the statement follows, for instance, from~\cite[Corollary 18]{lovitz_petrov_2023}.

This guarantees that, as points of $\bbP S^{d-r} V$, the elements $\phi_1 \contract h_t \vvirg  \phi_{r+1} \contract h_t$ are independent at $t=0$, and therefore they are independent for generic $t \neq 0$. Fix $t \neq 0$ for which they are independent. This yields $r+1$ linearly independent elements in the space of order $r$ partials of $h_t$ with $t \neq 0$; this implies that $h_t$ has rank at least $r+1$, which is a contradiction since $h_t \in L \subseteq \sigma_r^\circ(\nu_d(\bbP V))$.

In turn this contradiction implies $[w_1] \in \{[v_1],\dots,[v_r]\}$. The same argument holds for every $w_i$, and this concludes the proof.
\end{proof}

\begin{theorem}\label{thm:waring}
Let $r$ and $d$ be integers satisfying $d \geq 2r-1$, let $V$ be a vector space with $\dim V \geq 2$. Then the linear preserver of $\sigma_r^{\circ}(\nu_d(\bbP V))$ in $\GL(S^d V)$ is $G(V)$, namely the image of $\GL(V)$ in $\GL(S^d V)$.
\end{theorem}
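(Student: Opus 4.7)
The inclusion $G(V) \subseteq G_{\sigma_r^{\circ}(\nu_d(\bbP V))}$ is clear, so I focus on the reverse. Fix $g \in G_{\sigma_r^{\circ}(\nu_d(\bbP V))}$; by \autoref{prop:seg_ver} it suffices to show that $g$ preserves $\nu_d(\bbP V)$, as then $g \in G(V)$.

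The strategy is to use \autoref{lemma:secant_plane_line} to show that $g$ permutes the family of $r$-secant planes of $\nu_d(\bbP V)$. For an $r$-secant plane $\Pi = \langle v_1^d, \ldots, v_r^d\rangle$, the image $g\Pi$ is an $(r-1)$-dimensional projective subspace contained in $\sigma_r^{\circ}(\nu_d(\bbP V))$. Pick a generic point $q \in g\Pi$ having Waring rank exactly $r$ and lying on a unique $r$-secant plane $\Pi_q$ (generic identifiability of rank-$r$ decompositions, valid in the range $d \geq 2r-1$). For every $p \in g\Pi$, the line $\langle p, q\rangle$ is contained in $g\Pi \subseteq \sigma_r^{\circ}(\nu_d(\bbP V))$; by \autoref{lemma:secant_plane_line}, this line is contained in an $r$-secant plane, which by uniqueness must be $\Pi_q$. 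Hence $g\Pi \subseteq \Pi_q$, and a dimension count forces $g\Pi = \Pi_q$, so $g$ sends $r$-secant planes to $r$-secant planes.

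It remains to recover $\nu_d(\bbP V)$ in a $g$-equivariant way from the family of $r$-secant planes. The natural route is via fiber dimension: a point of $\sigma_r^{\circ}$ of Waring rank $s$ lies on an (at least) $(r-s)(\dim V - 1)$-dimensional family of $r$-secant planes, achieving its maximum $(r-1)(\dim V - 1)$ precisely when $s = 1$. Since $g$ acts bijectively on the set of $r$-secant planes, it preserves these fiber dimensions, so it preserves the rank-$1$ stratum, i.e. $g(\nu_d(\bbP V)) = \nu_d(\bbP V)$. Applying \autoref{prop:seg_ver} then completes the proof.

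The main obstacles are: (i) making rigorous the generic identifiability of rank-$r$ Waring decompositions in the sharp range $d \geq 2r-1$, which is classical for binary forms by Sylvester's theorem but for $n$-ary forms one must either cite the known identifiability results or handle the Alexander--Hirschowitz-type exceptional cases directly; and (ii) ensuring both that the generic point of $g\Pi$ has Waring rank exactly $r$ (rather than falling into $\sigma_{r-1}^{\circ}$) and that the fiber-dimension stratification is strictly monotone. Both of these can be handled by a downward induction on $r$, applying the theorem at rank $r-1$ (whose hypothesis $d \geq 2(r-1)-1$ is implied by $d \geq 2r-1$), combined with an analogous application of \autoref{lemma:secant_plane_line} to characterize $\sigma_{r-1}^{\circ}$ inside $\sigma_r^{\circ}$.
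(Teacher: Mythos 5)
Your route is structurally different from the paper's. The paper proves $G_{\sigma_r^\circ(\nu_d(\bbP V))}\subseteq G_{\sigma_{r-1}^\circ(\nu_d(\bbP V))}$ directly by contradiction: given $y$ of rank $<r$ with $gy$ of rank exactly $r$, the lines $\langle gv^d, gy\rangle$ (for all $v$) are forced by \autoref{lemma:secant_plane_line} and uniqueness into the single $r$-secant plane through $gy$, contradicting linear non-degeneracy of $g\cdot\nu_d(\bbP V)$. You instead show $g$ permutes $r$-secant planes, and then recover $\nu_d(\bbP V)$ from that action. Both proofs rest on \autoref{lemma:secant_plane_line} and on \emph{pointwise} (not merely generic) uniqueness of rank-$r$ Waring decompositions when $d\geq 2r-1$: this is the result cited in the paper as \cite[Theorem 6.4]{Chi:HilbertFunctionsTensorAn}, it holds for $n$-ary forms and for every rank-$r$ point, and it involves no Alexander--Hirschowitz exceptional cases, so your obstacle (i) is not an actual obstacle.

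The genuine gap is your obstacle (ii), and your proposed fix is circular. You need some $q\in g\Pi$ of Waring rank exactly $r$; a priori $g\Pi$ might consist entirely of forms of rank $\leq r-1$. Invoking ``the theorem at rank $r-1$'' does not help: that theorem characterizes $G_{\sigma_{r-1}^\circ}$, and it cannot be applied to $g$ until one already knows $g$ preserves $\sigma_{r-1}^\circ$ --- which is exactly what is to be shown. A correct fix is available but takes real work: let $s$ be the maximal Waring rank attained on $L:=g\Pi$, so $L\subseteq\sigma_s^\circ$ and there is $q\in L$ of rank exactly $s$; then \autoref{lemma:secant_plane_line} at level $s$ (its hypothesis $d\geq 2s-1$ holds since $s\leq r$) together with uniqueness of the rank-$s$ decomposition of $q$ forces $L$ into the $(s-1)$-dimensional $s$-secant plane through $q$, which contradicts $\dim L=r-1$ unless $s=r$. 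Separately, the fiber-dimension count in step 3 must read \emph{exactly} $(r-s)(\dim V-1)$, not ``at least''; this again relies on uniqueness at each level $s\leq r$ (and on linear independence of $\leq 2r$ distinct $d$-th powers of linear forms when $d\geq 2r-1$). With these repairs your argument is correct, but it ends up noticeably longer than the paper's.
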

\begin{proof}
We are going to prove that $G_{\sigma_r^{\circ}(\nu_d(\bbP V))}\subseteq G_{\sigma_{r-1}^{\circ}(\nu_d(\bbP V))}$. An induction argument provides then $G_{\sigma_r^{\circ}(\nu_d(\bbP V))} \subseteq G_{\nu_d(\bbP V)}$, and we conclude using \autoref{prop:seg_ver}.

Suppose toward contradiction that there exists $g \in G_{\sigma_r^{\circ}(\nu_d(\bbP V))} \setminus G_{\sigma_{r-1}^{\circ}(\nu_d(\bbP V))}$ and let $y \in \sigma_{r-1}^{\circ}(\nu_d(\bbP V))$ be an element with the property that $g y \notin \sigma_{r-1}^{\circ}(\nu_d(\bbP V))$.

Since $y \in \sigma_{r-1}^{\circ}(\nu_d(\bbP V))$, for every $v \in \bbP V$, the line $\langle v^d , y \rangle$ is contained in $ \sigma_{r}^{\circ}(\nu_d(\bbP V))$. Since $g \in G_{\sigma_r^{\circ}(\nu_d(\bbP V))}$, the same holds for the line $g \cdot \langle v^d , y \rangle = \langle g  v^d , g y \rangle$. Since $\langle g  v^d , g y \rangle$ is a line in $\sigma_{r}^{\circ}(\nu_d(\bbP V))$, by \autoref{lemma:secant_plane_line} there exist $u_1 \vvirg u_r \in \bbP V$ such that
\[
\langle g  v^d , g y \rangle \subseteq \langle u_1^d \vvirg u_r^d\rangle.
\]

In particular $u_1^d \vvirg u_r^d$ give a decomposition of $g y$ of length (at most) $r$; since $gy \notin \sigma_{r-1}^{\circ}(\nu_d(\bbP V))$, the decomposition has length exactly $r$. Since $d \geq 2r-1$, such decomposition is unique: this is a consequence of classical results by Sylvester \cite{Sylv:PrinciplesCalculusForms}; see \cite[Theorem 6.4]{Chi:HilbertFunctionsTensorAn} for the exact statement. The uniqueness of the decomposition shows in particular that $\langle u_1^d \vvirg u_r^d\rangle$ is independent of the choice of $v \in \bbP V$, which implies $\nu_d(\bbP V) \subseteq \langle u_1^d \vvirg u_r^d\rangle$. Since the Veronese variety is linearly non-degenerate, we obtain a contradiction and this concludes the proof.
\end{proof}
As discussed in \autoref{sec:secant}, we expect the preserver to be the expected one when passing to the closure $\sigma_r(\nu_d(\bbP^n)) = \sigma_r^\circ(\nu_d(\bbP^n))$ as well. For (very) small $r$, one can study generalizations of \autoref{lemma:secant_plane_line}, characterizing lines in  $\sigma_r(\nu_d(\bbP^n))$ depending on their intersection with the different components of $\sigma_r(\nu_d(\bbP^n)) \setminus \sigma_r^\circ(\nu_d(\bbP^n))$. The classification is far from straightforward already for $r \geq 4$, see e.g. \cite{BalBer:Strat}, and, in different but related settings, it non-trivial even for $r =2$ \cite{GS:secantGrassmannian,Gal:SecantsGeneralized}.

\section{Linear preservers in quantum information theory}\label{sec:quant}

Characterizing linear preservers has found utility in quantum information theory, where the physical quantum operations (completely positive, trace preserving maps) are linear~\cite{johnston2011characterizing,fovsner2013linear,contreras2019resource}. In this section we use our results to characterize preservers of some sets of quantum states/tensors of low entanglement.

\subsection{Preservers of hyperdeterminantal surfaces and the ``onion" of multipartite entanglement}

Let $V=V_1 \otimes \dots \otimes V_k$ be a complex tensor product space and let $X=\bbP V_1 \times \dots \times \bbP V_k \subseteq \bbP V$ be the Segre variety. In \cite{miyake2003classification}, the author proposes a classification of multipartite entanglement in terms of the ``onion"
\[
\bbP(V_1 \otimes \dots \otimes V_k)\supseteq X^{\vee} \supseteq \Sing(X^{\vee}) \supseteq \Sing^{(2)}(X^{\vee}) \supseteq \dots.
\]
\autoref{prop:seg_ver},~\autoref{lem:sing}, and~\autoref{prop:dual} combined tell us the linear preserver at each level:
\begin{theorem}
If $\ell \geq 0$ and $\Sing^{(\ell)}(X^{\vee})\neq \emptyset,$ then $G_{\Sing^{(\ell)}(X^{\vee})}=G(V_1,\dots, V_k) \rtimes \frakS$, where $\frakS \subseteq \frakS_k$ is the subgroup permuting factors of the same dimension.
\end{theorem}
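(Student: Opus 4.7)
Set $H := G(V_1, \dots, V_k) \rtimes \frakS$. The plan is to prove the two containments $H \subseteq G_{\Sing^{(\ell)}(X^{\vee})}$ and $G_{\Sing^{(\ell)}(X^{\vee})} \subseteq H$ separately, using the three cited ingredients as advertised.

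The first containment is the easy direction, obtained by chaining the three lemmas. By \autoref{prop:seg_ver} applied with $d_1 = \cdots = d_k = 1$, one has $G_X = H$. By \autoref{prop:dual} and the biduality theorem, $G_{X^{\vee}} = G_X = H$. Iterating \autoref{lem:sing} then gives
\[
H = G_{X^{\vee}} \subseteq G_{\Sing(X^{\vee})} \subseteq G_{\Sing^{(2)}(X^{\vee})} \subseteq \cdots \subseteq G_{\Sing^{(\ell)}(X^{\vee})},
\]
which is all that is needed, since every intermediate singular locus is nonempty by monotonicity of the descending chain.

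For the reverse containment $G_{\Sing^{(\ell)}(X^{\vee})} \subseteq H$, my plan is to descend further along the onion until reaching the Segre variety itself, where the preserver is already pinned down. Concretely, since $\Sing^{(\ell)}(X^{\vee}) \supseteq \Sing^{(\ell+1)}(X^{\vee}) \supseteq \cdots$ is a strictly descending chain of closed subvarieties, it must eventually stabilize or become empty. I would argue that the deepest nonempty layer coincides with $X$: indeed, $X$ is smooth, so $\Sing(X) = \emptyset$, which is consistent with $X$ being the terminal layer; and the structure of Miyake's onion from \cite{miyake2003classification} identifies the innermost nonempty stratum as the Segre. Granting this, if $m \geq \ell$ is the smallest integer with $\Sing^{(m)}(X^{\vee}) = X$, then a further application of \autoref{lem:sing} yields
\[
G_{\Sing^{(\ell)}(X^{\vee})} \subseteq G_{\Sing^{(\ell+1)}(X^{\vee})} \subseteq \cdots \subseteq G_{\Sing^{(m)}(X^{\vee})} = G_X = H,
\]
and combined with the first containment we obtain equality at every nonempty level.

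The main obstacle is justifying that the onion terminates at $X$ in multipartite generality. In the bipartite case $V = V_1 \otimes V_2$ with equal dimensions this is classical, since $\Sing^{(s)}(X^{\vee}) = \sigma_{n-1-s}(X)$ and the chain stops at $\sigma_1(X) = X$; the nonsquare case and higher-order Segres require more care, but follow from the explicit description of singular loci of secant and dual varieties of Segres in the literature cited in \autoref{sec:sing}. If this geometric input is not directly available in the form required, an alternative route would be to exploit \autoref{prop:dual} applied to $\Sing^{(\ell)}(X^{\vee})$ itself: one has $G_{\Sing^{(\ell)}(X^{\vee})} = G_{(\Sing^{(\ell)}(X^{\vee}))^{\vee}}$, and identifying the right-hand side with a variety whose preserver is known to be $H$ (for instance $X$, $X^{\vee}$, or one of the intermediate layers, via biduality) would close the argument without needing to track termination explicitly.
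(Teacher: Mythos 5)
Your proposal is essentially the paper's intended argument. The paper's ``proof'' is a single sentence pointing to \autoref{prop:seg_ver}, \autoref{lem:sing}, and \autoref{prop:dual}; as you correctly observe, those three results alone only give the containment $H \subseteq G_{\Sing^{(\ell)}(X^{\vee})}$, and closing the loop requires knowing that the descending chain of singular loci bottoms out at a Segre variety. You are right to flag this as the crux: it is the geometric input that the paper tacitly imports from Miyake's onion picture without a proof in multipartite generality, and your argument is therefore as complete (or as incomplete) as the paper's own. Two minor points: the terminal layer is the Segre of the dual spaces, $\bbP V_1^{*} \times \cdots \times \bbP V_k^{*}$ inside $\bbP V^{*}$, rather than $X$ itself, which lives in $\bbP V$; this is harmless because the identification $\GL(V) \cong \GL(V^{*})$ carries $G(V_1,\dots,V_k) \rtimes \frakS$ to $G(V_1^{*},\dots,V_k^{*}) \rtimes \frakS$, so \autoref{prop:seg_ver} applies equally to either Segre. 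Your alternative route via bidualizing $\Sing^{(\ell)}(X^{\vee})$ is not developed and would still require identifying the dual of an intermediate onion layer, so it does not sidestep the termination question.
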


\subsection{Biseparable and fully separable preservers}
Let $V_1,\dots, V_k$ be complex vector spaces of finite dimension endowed with positive definite Hermitian inner products $\langle -, -\rangle$ which we take to be antilinear in the second argument. Let $V=V_1\otimes \dots \otimes V_k$ be endowed with the natural induced inner product. The inner product defines an antilinear isomorphism $V \simeq V^*$ given by $ v^* = \langle - , v \rangle$.  A \textit{quantum channel} is a map $\Phi \in \End(\End(V))$ that is completely positive and trace preserving, see~\cite{watrous2018theory} for details. We consider only \textit{unitary channels}, that is channels of the form $\Phi(A)=U A U^*$ for a unitary $U \in \setft{U}(V)$. A \textit{quantum state} is an endomorphism $\rho \in \End(V)$ that is positive semidefinite and of trace one. We say that $\rho$ is \textit{fully separable} if
\[
\rho \in \setft{conv}\{vv^* : [v]\in \bbP V_1 \times \dots \times \bbP V_k\},
\]
where $\conv(-)$ denotes the convex hull. We say that $\rho$ is \textit{biseparable} if
\[
\rho \in \setft{conv}\{vv^* : [v]\in \setft{Part}(\{\text{all bipartitions of $[k]$}\})\}.
\]
Here, $\setft{Part}(\{\text{all bipartitions of $[k]$}\})$ is the set of biseparable tensors defined in~\autoref{sec:part}.

The recent work~\cite{contreras2019resource} introduces a resource theory of entanglement based on what they call \textit{fully-separable preservers (FSP)} and \textit{biseparable preservers (BSP)}. By definition, these are the quantum channels that preserve the set of fully separable states and biseparable states, respectively.~\autoref{cor:part} and~\autoref{prop:seg_ver} imply the following characterization of unitary FSP's and BSP's. The FSP version of this result was also observed in~\cite[Theorem 6.1]{johnston2011characterizing}.

\begin{theorem}
Let $V=V_1\otimes \dots \otimes V_k$. A unitary channel $\Phi(A)=U A U^* \in \End(\End(V))$ is in BSP/FSP if and only if $U \in (\setft{U}(V_1)\times \dots \times \setft{U}(V_k)) \rtimes \frakS$, where $\setft{U}(V_i)\subseteq \GL(V_i)$ is the unitary subgroup and $\frakS \subseteq \frakS_k$ is the subgroup permuting factors of the same dimension.
\end{theorem}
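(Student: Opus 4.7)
The plan is to reduce the statement to the preserver results already established (Proposition~\ref{prop:seg_ver} for FSP and Corollary~\ref{cor:part} for BSP), and then combine with the unitarity of $U$.

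First I would recast the hypothesis at the level of pure states. Let $S$ denote either the convex set of fully separable states or the convex set of biseparable states; in both cases $S$ is the convex hull of its pure members $vv^*$, where $[v]$ lies in $Y = \bbP V_1 \ttimes \bbP V_k$ (for FSP) or $Y = \Part(\text{all bipartitions of }[k])$ (for BSP). Since $\Phi(vv^*) = (Uv)(Uv)^*$ is itself a rank-one operator, and a rank-one positive operator is an extreme point of the set of positive trace-one operators, the condition $\Phi(S) \subseteq S$ forces $(Uv)(Uv)^*$ to be a pure element of $S$, hence $[Uv] \in Y$ for every $[v] \in Y$. In other words, $U$ induces a map $Y \to Y$ as a subset of $\bbP V$, so $U \in G_Y$.

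Next I would quote the relevant preserver theorems. For the FSP case, $Y$ is the Segre variety and $G_Y = G(V_1 \vvirg V_k) \rtimes \frakS$ by~\autoref{prop:seg_ver}. For the BSP case, $\{\text{all bipartitions}\}$ is separating, so $G_Y = G(V_1 \vvirg V_k) \rtimes \frakS$ by~\autoref{cor:part}. In either case, up to a scalar which may be absorbed into a global phase of $U$, we may write $U = (A_1 \ootimes A_k) \circ P_\pi$ where $A_i \in \GL(V_i)$ and $P_\pi$ is the permutation operator of a $\pi \in \frakS$.

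The remaining step is to promote each $A_i$ to a unitary using that $U$ itself is unitary. Since $P_\pi$ is a unitary operator, it suffices to analyze $A_1 \ootimes A_k$. The identity $(A_1 \ootimes A_k)^*(A_1 \ootimes A_k) = A_1^*A_1 \ootimes A_k^*A_k = \id_V$ together with the fact that each $A_i^*A_i$ is positive definite forces $A_i^*A_i = \lambda_i \id_{V_i}$ for some $\lambda_i > 0$ with $\prod_i \lambda_i = 1$. Setting $U_i = \lambda_i^{-1/2} A_i$ gives $U_i \in \setft{U}(V_i)$, and the scalar prefactor $\prod_i \lambda_i^{1/2}$ has modulus one, hence is a global phase which disappears in $\Phi$. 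Thus $U \in (\setft{U}(V_1) \ttimes \setft{U}(V_k)) \rtimes \frakS$, as desired. The converse is immediate since any such $U$ sends pure separable (resp.\ biseparable) vectors to pure separable (resp.\ biseparable) vectors, hence $\Phi$ preserves $S$ by convexity.

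The main obstacle is really the extreme-point argument at the start: one must invoke that rank-one states remain extreme within the smaller convex set $S$, so that $\Phi(S) \subseteq S$ forces the pure elements of $S$ to be sent to pure elements of $S$. Once this reduction to the projective preserver $G_Y$ is made, everything else is either a direct citation of earlier results in the paper or the elementary tensor-product computation showing that a unitary which factors as $A_1 \ootimes A_k$ is, up to phase, a product of unitaries.
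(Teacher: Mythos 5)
Your argument follows the paper's own proof essentially verbatim: reduce to the projective preserver via the extreme-point observation, then invoke Corollary~\ref{cor:part} (BSP) and Proposition~\ref{prop:seg_ver} (FSP). The only difference is that you explicitly spell out the final step — showing that a unitary $U$ lying in $G(V_1\vvirg V_k)\rtimes\frakS$ must, up to global phase, factor as $(U_1\ootimes U_k)P_\pi$ with each $U_i$ unitary — which the paper leaves implicit; that computation is correct and a worthwhile addition.
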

\begin{proof}
We prove the statement for BSP. The proof for FSP is analogous. Since $\Phi$ is in BSP, for every tensor $v\in V$ of partition rank 1 it holds that $\Phi(vv^*)=(Uv)(Uv)^*$ is biseparable. Since states of the form $uu^*$ are extreme points, it follows that $Uv$ has partition rank 1. So $U$ preserves the partition rank 1 tensors.  This completes the proof by~\autoref{cor:part}.
\end{proof}

\section{Linear preservers computed directly}\label{sec:compute}

In this section we use the Lie algebra approach introduced in~\autoref{lie_algebra} to directly compute linear preservers of some particular secant varieties. The Lie algebra $\frakgl(V)$ acts on every homogeneous component $S^d V^*$ of $\bbC[V]$ by the Leibniz rule:
\[
A \cdot f =  \sum_{i,j=1}^{n} a_{ij} x_i \frac{\partial}{\partial x_j} f
\]
where $x_1 \vvirg x_{n}$ is any basis of $V^*$ and $A = (a_{ij})$ is the represented as a matrix in this fixed basis.

As discussed in~\autoref{lie_algebra}, for a variety $X \subseteq \bbP(V)$ whose ideal $I(X)$ is generated in degree at most $d$, the Lie algebra of $G_X$ is given by
\[
\frakg_X=\{A \in \frakgl(V) : A \cdot I_d(X) \subseteq I_d(X)\}
\]
and it uniquely determines the identity component $G_X^{\circ}$.

In the cases of interest $X=\sigma_r(\nu_{d_1 \vvirg d_k}(\bbP V_1 \ttimes \bbP V_k))$ is a secant variety of a Segre-Veronese variety. In this case, $G(V_1,\dots, V_k) \rtimes \tilde{\frakS} \subseteq G_X$, where $\tilde{\frakS} \subseteq \frakS_k$ is the subgroup permuting factors of the same dimension and embedded in the same degree, and we want to check whether equality holds.

The Lie algebra approach allows us to determine, using elementary linear algebra, whether $\dim(\frakg_X)=\sum_{i=1}^k ((\dim V_i)^2-1)+1 = \dim G(V_1,\dots, V_k)$. If this equality of dimension holds, necessarily $\frakg_X = \frakgl(V_1 \vvirg V_k)$, hence $G_X^\circ = G(V_1,\dots, V_k)$. By \autoref{prop: normalizer} this implies $G_X = G(V_1 \vvirg V_k) \rtimes \tilde{\frakS}$.

Explicitly, the calculation of $\frakg_X$ is reduced to solving a linear system. Fix bases $f_1,\dots, f_p$ of $I_d(X) \subseteq S^d(V^*)$ and $\theta_1,\dots, \theta_q $ of $I_d(X)^{\perp} \subseteq S^d(V)$. Then the condition $A I_d(X) \subseteq I_d(X)$ is equivalent to the system of linear equations
\[
\theta _i \contract A f_j = 0 \quad \text{ for every $i,j$}.
\]
This is usually a very large linear system. However, it is often the case that a suitable choice of basis makes it much more tractable. For instance, often the elements $f_1 \vvirg f_p$ can be chosen to be supported only on a special subset of monomials. This allows one to speed up the computation of $\theta_1,\dots, \theta_q$ by first computing a basis for the complement to $\langle f_1,...,f_p \rangle$ in the space of monomials on which they are supported, and then augmenting it with the remaining monomials. In all of our examples, the $f_i$ have integer coefficients, which allows us to use symbolic computation.

Some examples where this approach was implemented are available at the GitHub repository~\cite{daniels_code}. The implementation is symbolic in \texttt{Macaulay2} \cite{M2} for small examples and numerical in python using the NumPy package \cite{numpy} for the larger ones. In each of the following cases, we verify that the Lie algebra $\frakg_X$ has the expected dimension, and hence $X$ has the expected preserver:
\begin{itemize}
\item ${\sigma}_2(\bbP^{1} \times \bbP^1 \times \bbP^1 \times \bbP^1)$
\item ${\sigma}_3(\bbP^{1} \times \bbP^1 \times \bbP^1 \times \bbP^1)$
\item ${\sigma}_2(\bbP^{1} \times \bbP^1 \times \bbP^1 \times \bbP^2)$
\item ${\sigma}_2(\bbP^{1} \times \bbP^1 \times \bbP^2 \times \bbP^2)$
\item ${\sigma}_2(\bbP^{1} \times \bbP^1 \times \bbP^1 \times \bbP^1 \times \bbP^1)$.
\end{itemize}
The method requires one to start with a set of generators for $I(X)$. In fact, following \autoref{lemma: stabilize ideal}, for the relevant bound, it suffices to consider any homogeneous component $I_d(X)$. The ideal of each $\sigma_2$ is generated in degree $3$ by the $3 \times 3$ minors of the flattenings~\cite{raicu2012secant}. The degree $4$ component of the ideal of ${\sigma}_3(\bbP^{1} \times \bbP^1 \times \bbP^1 \times \bbP^1)$ is spanned by any two of the three determinants of the $4 \times 4$ flattenings \cite{Segre:FormeQuadrilineari}.

All of the above examples are subsumed by~\autoref{thm:main}, but they are useful as concrete implementations and indicators for the size of problems that can be computed. In fact, these and many other examples were computed during the development of this project before we were able to prove \autoref{thm:main}.

Many additional examples can be computed, also beyond the range of \autoref{thm:main}. In the case when $X = \sigma_r(Y)$ is a hypersurface, or more generally it has a single equation of minimal degree, then $\frakg_X$ is the direct sum of the line spanned by $\id_V$ and the kernel of the map $\frakgl(V) \to S^d V^*$ defined by the Lie algebra action applied to the unique generator $f$ of $I_d(X)$ for some $d$. There are a number of cases where this yields a proof that $G_X$ is the expected preserver. Some examples are the following:
\begin{itemize}
    \item $\sigma_3( \nu_3(\bbP^2))$ is a hypersurface of degree $4$ defined by Aronhold's invariant \cite[Section 3.10.1]{landsberg2012tensors}.
    \item $\sigma_4(\bbP^2 \times \bbP^2 \times \bbP^2)$ is a hypersurface of degree $9$ defined by Strassen's invariant \cite[Section 3.9.3]{landsberg2012tensors}.
    \item $\sigma_5(\bbP^1 \times \bbP^1 \times \bbP^1 \times \bbP^1 \times \bbP^1)$ is a variety of codimension $2$, which has a single equation in degree $6$. This example is further discussed below.
\end{itemize}
The case $X = \sigma_5( {\bbP^1 }^{\times 5})$ was studied extensively in \cite{OedSam:5thSecant5P1s}. In \cite{OedSam:5thSecant5P1s} a numerical proof is given of the fact that its ideal is generated by two elements $I(X) = (F_6,F_{16})$ of degree $6,16$ respectively. The proof involves numerical computations but the fact that $I_6(X) = \langle F_6\rangle$ can be proved explicitly using representation theory. We call $F_6$ the Oeding-Sam invariant. By \autoref{lemma: stabilize ideal}, we obtain that $G_X \subseteq G_{F_6}$. Equality follows because one can explicitly verify that $\dim \frakg_{F_6} = 16 = \dim G(V_1 \vvirg V_5)$; therefore $G_X^\circ = G(V_1 \vvirg V_5)$ and $G_X = G(V_1 \vvirg V_5) \rtimes \frakS_5$ by \autoref{prop: normalizer} and \autoref{prop:seg_ver}.

In~\autoref{app:interpolation}, we illustrate the method used in \cite{daniels_code} to compute $F_6$, Aronhold's invariant, and Strassen's invariant for these computations.

\appendix

\section{Autormorphism group of multiprojective space}\label{appendix:automorphisms}

Let $X \subseteq \bbP V$ be an algebraic variety. A vector bundle $\calL \to X$ of rank one is called a line bundle on $X$. Line bundles, up to isomorphism, form an abelian group with the operation of tensor product; this is the Picard group of $X$, denoted $\Pic (X)$. We refer to \cite[Chapter 3]{Shaf:BasicAlgGeom1} for some fundamental results about the Picard group.

If $f : X \to Y$ is a morphism of varieties, then $f$ induces, via pullback, a group homomorphism $f^* : \Pic(Y) \to \Pic(X)$ where the line bundle $\calL \to Y$ is mapped to its pullback $f^* \calL \to X$. If $f$ is an isomorphism, then $f^*$ is an isomorphism as well.

In particular, every element $g \in \Aut(X)$, induces an automorphism $g^* : \Pic(X) \to \Pic(X)$. In the case of product of projective spaces $X = \bbP V_1 \ttimes \bbP V_k$ this allows one to characterize $\Aut(X)$ and obtain \autoref{lemma: automorphisms of multiP}. We provide a sketch of the proof here, assuming minimal background. We start with two basic results.
\begin{lemma}\label{lem: picard}
Let $V_1 \vvirg V_k$ be complex vector spaces with $\dim V_i \geq 2$. Then
\[
\Pic(\bbP V_1 \ttimes \bbP V_k) = \bbZ \cdot \calH_1 \ooplus \bbZ \cdot \calH_k,
\]
where $\calH_j = \pi_j^* \calO_{\bbP V_j}(1)$ is the pullback of the hyperplane bundle of $\bbP V_j$ via the projection of $\pi_j$ of $\bbP V_1 \ttimes \bbP V_d$ onto its $j$-th factor.
\end{lemma}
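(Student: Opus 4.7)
The plan is to proceed by induction on $k$. For $k=1$ the statement reduces to the classical fact $\Pic(\bbP V_1) = \bbZ \cdot \calO_{\bbP V_1}(1)$, which follows from the standard correspondence between line bundles and Cartier divisors on $\bbP^n$ combined with the computation of the divisor class group of projective space.

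For the inductive step, I would set $X = \bbP V_1 \ttimes \bbP V_{k-1}$ and establish a direct sum decomposition
\[
\Pic(X \times \bbP V_k) \cong \Pic(X) \oplus \Pic(\bbP V_k),
\]
where the isomorphism is given by the pullback map $(\calM,\calN) \mapsto \pi_X^*\calM \otimes \pi_{\bbP V_k}^* \calN$. To see that this map is surjective, I would take an arbitrary $\calL \in \Pic(X \times \bbP V_k)$, restrict it to a slice $\{x_0\} \times \bbP V_k \cong \bbP V_k$ to obtain some $\calO_{\bbP V_k}(d_k)$, and observe that the integer $d_k$ is independent of $x_0$ because $X$ is connected and the degree is locally constant in a flat family of line bundles on $\bbP^n$. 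After twisting by $\pi_{\bbP V_k}^* \calO_{\bbP V_k}(-d_k)$, I may assume $\calL$ restricts trivially to every slice $\{x\}\times \bbP V_k$. The seesaw principle then identifies $\calL$ with $\pi_X^* \calM$ for a unique $\calM \in \Pic(X)$, and the inductive hypothesis yields the claim. Injectivity of pullback is straightforward: a line bundle in the kernel restricts trivially to every horizontal and every vertical slice, hence is trivial.

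The main technical ingredient is the seesaw principle, which applies here thanks to the vanishing $H^1(\bbP V_k, \calO_{\bbP V_k}) = 0$ (equivalently $\Pic^0(\bbP V_k) = 0$), itself a standard cohomology computation on projective space. An alternative and more self-contained route, which I would adopt to bypass the seesaw argument altogether, is to observe that $Z := \bbP V_1 \ttimes \bbP V_k$ is a smooth projective toric variety and to use the explicit description of its divisor class group as recorded in \cite{Cox95}: every irreducible hypersurface on $Z$ is the zero locus of a multihomogeneous polynomial of some multidegree $(a_1 \vvirg a_k) \in \bbZ^k$, and two such hypersurfaces are linearly equivalent precisely when they have the same multidegree. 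This identifies $\Cl(Z)$ with $\bbZ^k$, with generators given by the pullbacks $\calH_j = \pi_j^* \calO_{\bbP V_j}(1)$; smoothness of $Z$ gives $\Pic(Z) = \Cl(Z)$, which is the desired conclusion.
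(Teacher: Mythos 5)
Your proof is correct, and it takes a genuinely different route from the paper's. The paper simply invokes the isomorphism $\Pic(X)\cong \rmCH^1(X)$ for smooth varieties and then cites Eisenbud--Harris for the computation of the Chow group of a product of projective spaces, offering no details. Your first route — induction on $k$ together with the seesaw principle — is a classical cohomological argument: restrict a line bundle to a vertical slice $\{x\}\times\bbP V_k$, note that the resulting degree is locally constant (Hilbert polynomials are constant in flat families) hence constant on the connected base, twist to make all vertical restrictions trivial, and then descend via cohomology and base change (which is what the seesaw principle packages; the $H^1(\bbP V_k,\calO)=0$ remark is really a gloss explaining why $\Pic(\bbP V_k)$ is discrete rather than the technical engine of the descent). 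This gives a self-contained proof that avoids Chow groups entirely. Your second route, via the multihomogeneous coordinate ring of a smooth toric variety and the identification $\Pic=\Cl$, is much closer in spirit to the paper's proof and in fact aligns with the remark just before the lemma, where the paper refers to the Cox reference "for more general results of this type." Both of your arguments are sound; the seesaw route is the one that is genuinely novel relative to the paper, trading the algebraic-geometric intersection-theory input for a cohomological one.
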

\begin{proof}
 Since $X = \bbP V_1 \ttimes \bbP V_k$ is smooth, the correspondence between line bundles and divisors defines an isomorphism between $\Pic(X)$ and its first Chow group $\rmCH^1(X)$. The result follows by characterizing this Chow group; we refer to \cite[Section 2.1.4]{EisHar:3264} for an explicit proof in the case of two factors. The general case is a straightforward generalization.
\end{proof}

\begin{lemma}\label{lem: pgl}
Let $V$ be a complex vector space. Then
    \[
    \Aut(\bbP V) = \PGL(V) = \GL(V) / \bbC^\times
    \]
where $\bbC^\times = \{ \lambda \id_V : \lambda \neq 0\}$ is the subgroup of scalar matrices.
\end{lemma}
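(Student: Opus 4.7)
The plan is to establish the two inclusions $\PGL(V) \subseteq \Aut(\bbP V)$ and $\Aut(\bbP V) \subseteq \PGL(V)$ separately. The first is immediate from the functoriality of projectivization: every $g \in \GL(V)$ descends to the regular automorphism $[v] \mapsto [gv]$ of $\bbP V$, with inverse induced by $g^{-1}$; two elements of $\GL(V)$ act in the same way on $\bbP V$ if and only if they differ by a nonzero scalar, so the homomorphism $\GL(V) \to \Aut(\bbP V)$ has kernel exactly $\bbC^\times$ and descends to an injection $\PGL(V) \hookrightarrow \Aut(\bbP V)$.

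For the reverse inclusion I would proceed via the Picard group, analogously to the previous lemma. Fix $g \in \Aut(\bbP V)$ and consider the pullback $g^* : \Pic(\bbP V) \to \Pic(\bbP V)$, which is a group automorphism because $g$ is an isomorphism of varieties. The single-factor analogue of \autoref{lem: picard} gives $\Pic(\bbP V) = \bbZ \cdot \calO_{\bbP V}(1)$, so the only group automorphisms of $\Pic(\bbP V)$ are $\pm \id$, and consequently $g^* \calO(1) \cong \calO(\pm 1)$. The minus sign is ruled out by the observation that $g^*$ is also an isomorphism on global sections, whereas $H^0(\bbP V, \calO(-1)) = 0$ while $H^0(\bbP V, \calO(1)) = V^* \neq 0$; hence $g^* \calO(1) \cong \calO(1)$.

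The pullback $g^*$ thus restricts to a linear isomorphism of $V^* = H^0(\bbP V, \calO(1))$, which under the canonical identification $\GL(V^*) \cong \GL(V)$ produces an element $\phi \in \GL(V)$. The main obstacle, and final step, is to verify that $\phi$ actually induces $g$ on $\bbP V$. I would argue this using that $\calO(1)$ is very ample and that its complete linear system yields the identity embedding $\bbP V \hookrightarrow \bbP(H^0(\bbP V, \calO(1))^*)$; naturality of this embedding with respect to a fixed choice of isomorphism $g^* \calO(1) \cong \calO(1)$ forces $g$ and $\phi$ to agree up to a scalar. Since the scalar ambiguity introduced by the choice of line bundle isomorphism is exactly $\bbC^\times$, one recovers a well-defined element of $\PGL(V)$ inducing $g$, which completes the proof.
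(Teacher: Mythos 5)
Your proposal is correct and follows essentially the same route as the paper: identify $\Pic(\bbP V) \cong \bbZ$, use the global-sections dimension count to conclude $g^*\calO(1) \cong \calO(1)$, and recover linearity from there. The only difference is in how you close: the paper tersely says $f$ then preserves hyperplanes and linear spaces and stops, whereas you explicitly build $\phi \in \GL(V)$ from $g^*$ on $H^0(\calO(1)) = V^*$ and check via the complete linear system that $\phi$ induces $g$ — a more explicit rendering of the same final step, not a different method.
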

\begin{proof}
Let $f : \bbP V \to \bbP V$ be an automorphism. The Picard group of $\bbP V$ is $\Pic(\bbP V) = \bbZ$ and its two generators are $\calO(\pm 1)$, that is the hyperplane bundle and its dual. The pull-back map $f^*: \Pic(X) \to \Pic(X)$ is an isomorphism so it maps $\calO(1)$ to either $\calO(1)$ or $\calO(-1)$. However, $\dim H^0(f^*\calO(1)) = \dim H^0(\calO(1)) = \dim V$ whereas $\dim H^0(\calO(-1)) = 0$ so $f^*\calO(1) = \calO(1)$. This implies that $f$ maps hyperplanes to hyperplanes and more generally linear spaces to linear spaces. This concludes the proof.
\end{proof}

Using these results, we provide a proof of \autoref{lemma: automorphisms of multiP}. Following \autoref{lem: picard}, write $\calO(a_1 \vvirg a_k ) = \calH_1^{\otimes a_1} \ootimes \calH_k^{\otimes a_k}$ where $ \calL^{\otimes a}$ denotes the $a$-th tensor power of the line bundle $\calL$ if $a$ is nonnegative or the $(-a)$-th tensor power of the dual bundle $\calL^\vee$ is $a$ is negative.
\begin{lemma}[\autoref{lemma: automorphisms of multiP}]
Let $V_1 \vvirg V_k$ be vector spaces with $\dim V_i \geq 2$. Then
\[
\Aut(\bbP V_1 \ttimes \bbP V_k) = (\PGL(V_1) \ttimes \PGL(V_k) ) \rtimes \frakS.
\]
where $ \frakS \subseteq \frakS_d$ is the subgroup of the symmetric group permuting factors of the same dimension.
\end{lemma}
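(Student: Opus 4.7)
The plan is to establish the non-trivial inclusion $\Aut(X) \subseteq (\PGL(V_1) \ttimes \PGL(V_k)) \rtimes \frakS$, writing $X = \bbP V_1 \ttimes \bbP V_k$; the reverse inclusion is clear since each factor-wise $\PGL(V_i)$-action lifts to $X$ and each permutation in $\frakS$ swaps factors of the same dimension. The key idea is to let $f \in \Aut(X)$ act on $\Pic(X)$ via pullback, use \autoref{lem: picard} to extract a permutation component, and then invoke \autoref{lem: pgl} on each remaining factor separately.

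First, I would analyze $f^*: \Pic(X) \to \Pic(X)$ as an automorphism of the lattice $\bbZ^k$ generated by the classes $\calH_i$. By the K\"unneth formula, the effective classes (those with nonzero global sections) are exactly $\calO(a_1 \vvirg a_k)$ with all $a_i \geq 0$: indeed $h^0(\calO(a_1 \vvirg a_k)) = \prod_i \binom{n_i-1+a_i}{n_i-1}$ in that range and vanishes otherwise, where $n_i = \dim V_i$. Since $f$ is an isomorphism, $h^0(f^* \calL) = h^0(\calL)$ for every $\calL$, so $f^*$ preserves the effective cone $\bbZ_{\geq 0}^k$. The only lattice automorphisms of $\bbZ^k$ preserving $\bbZ_{\geq 0}^k$ are coordinate permutations (they must permute the extremal rays and respect the nonnegative semigroup structure), so $f^* \calH_i = \calH_{\sigma(i)}$ for some $\sigma \in \frakS_k$. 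Because $h^0(\calH_i) = \dim V_i$, the equality $\dim V_i = \dim V_{\sigma(i)}$ is forced, hence $\sigma \in \frakS$.

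Second, after fixing any lift $\tau \in \Aut(X)$ of $\sigma^{-1}$ as a permutation of identified same-dimensional factors and replacing $f$ by $\tau \circ f$, I may assume $f^* \calH_i \cong \calH_i$ for every $i$. Each projection $\pi_i: X \to \bbP V_i$ is the morphism defined by the complete linear system $|\calH_i|$, since $H^0(X,\calH_i) \cong V_i^*$ by K\"unneth. Consequently $\pi_i \circ f$ is also a morphism associated to $|\calH_i|$ and must equal $\tilde{f}_i \circ \pi_i$ for some $\tilde{f}_i \in \Aut(\bbP V_i) = \PGL(V_i)$ by \autoref{lem: pgl}. Assembling all projections, whose product is the identity on $X$, yields $f = (\tilde{f}_1 \vvirg \tilde{f}_k) \in \PGL(V_1) \ttimes \PGL(V_k)$, and restoring $\tau$ places the original $f$ in the desired semidirect product.

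The step that requires the most care is the lattice argument: identifying the effective cone as $\bbZ_{\geq 0}^k$ and concluding that every lattice automorphism preserving it is a coordinate permutation of the $\calH_i$'s. This is what produces the $\frakS$-component, and it crucially uses that preservation of $H^0$ forces preservation of both the semigroup structure and dimensions. Once $f^*$ acts trivially on each $\calH_i$, the remainder of the proof is a clean descent along the morphisms $\pi_i$ combined with \autoref{lem: pgl}.
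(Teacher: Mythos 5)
Your proof is correct and takes essentially the same approach as the paper: analyze the pullback action of $f^*$ on $\Pic(X)$ via \autoref{lem: picard} to deduce that $f^*$ permutes the classes $\calH_i$ among factors of the same dimension, then reduce to \autoref{lem: pgl} on each factor. Your version is somewhat more explicit in the final descent step, arguing cleanly via the projections $\pi_i$ and the completeness of the linear systems $|\calH_i|$, where the paper compresses this into a brief appeal to induction on $k$.
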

\begin{proof}
    Let $X = \bbP V_1 \ttimes \bbP V_k$ and let $f : X \to X$ be an automorphism. Then $f^* : \Pic(X) \to \Pic(X)$ is an automorphism of $\bbZ^k$; it is represented by a $k \times k$ matrix $A$ with integer entries and determinant $\pm 1$. Suppose without loss of generality $\dim V_1 \leq \dim V_i$ for every  $i$. We are going to show $f^* \calH_1 = \calH_j$ for some $j$ such that $\dim V_j = \dim V_1$. The full proof follows by induction on $k$.

We have $f^* \calH_1 = \calO(a_1 \vvirg a_k)$ where $a_1 \vvirg a_k$ are the entries of the first column of the matrix $A$. If $a_j$ is negative for any $j$, then $H^0(\calO(a_1 \vvirg a_k)) = 0$. This guarantees that $a_j$ is nonnegative for every $j$ because $\dim H^0(\calH_1) = \dim V_1$. Moreover $\dim H^0(\calO(a_1 \vvirg a_k)) > \dim V_1$ unless $\calO(a_1 \vvirg a_k) = \calH_j$ for some $j$ such that $\dim V_j = \dim V_1$. This proves the desired condition and in turn shows that the $\calH_j$'s with $\dim V_j = \dim V_1$ are permuted by $f^*$. Proceeding by induction completes the proof.
\end{proof}

\section{Interpolation for the computation of invariants}\label{app:interpolation}

We illustrate the method used in \cite{daniels_code} to compute the relevant invariants for the computations of \autoref{sec:compute}. The implementation is in \texttt{Macaulay2}. Most invariants relevant for the study of secant varieties have a geometric interpretation, and in order to check whether they vanish on a given tensor one usually does not compute the invariant symbolically, but rather checks the corresponding geometric property. However, in order to compute the Lie algebra annihilator in \autoref{sec:compute}, we need symbolic expressions of the three invariants of interest: Aronhold's invariant of degree $4$ for $\sigma_3(\nu_3(\bbP^2))$, Strassen's invariant of degree $9$ for $\sigma_4(\bbP^2 \times \bbP^2 \times \bbP^2)$, and the Oeding-Sam invariant of degree $6$ for $\sigma_5(\bbP^1 \times \bbP^1 \times \bbP^1\times \bbP^1\times \bbP^1)$.

In the case of Aronhold's invariant, we use its description as any of the degree $4$ pfaffians of a special $9 \times 9$ skew-symmetric matrix, see \cite[Section 3.10.1]{landsberg2012tensors} and \cite[Theorem 1.2]{Ottav09}. An analogous description is possible for Strassen's invariant, but it results in the symbolic computation of a determinant of size $9$, which is beyond the computational power we had at our disposal. Moreover, we do not have a determinantal interpretation for the Oeding-Sam invariant. For these two cases, we resorted to a standard interpolation method, enhanced with the use of representation theory.

In both settings, our goal is to compute an element $F \in S^d ({\bbC^{n}}^{\otimes k})$ which is invariant for the action of $\SL_n \ttimes \SL_n$. Importantly, $F$ vanishes on a variety $X$ from which one can easily sample points.

Suppose $S^d (\bbC^{n} \ootimes \bbC^n)$ is endowed with a basis $M_1 \vvirg M_N$, for instance the basis of the monomials of degree $d$ in the coefficients of the tensors. A naive interpolation approach consists in sampling randomly $N$ points from $X$ and constructing a matrix of size $N \times N$ whose entry $(i,j)$ is the evaluation of $M_j$ at the $i$-th sample point. With high probability, this matrix has rank $N-1$ and the unique element of the kernel is the column vector of the coefficients of $F$ in the basis $M_1 \vvirg M_N$. This approach is computationally intractable because of the size of $N$. Therefore, we use the invariancy for the action of $\SL_n \ttimes \SL_n$ to reduce the size of the search space, and then use interpolation on a small subspace of $S^d (\bbC^{n} \ootimes \bbC^n)$. In order to reduce the search space, we use mainly two methods: passing to the weight zero space, and symmetrizing by the action of a finite group.

Let $\bbT = \bbT_n^{\SL} \ttimes \bbT_n^{\SL}$ be the torus of $k$-tuples of diagonal matrices having determinant one. Every monomial $M_i \in S^d (\bbC^{n} \ootimes \bbC^n)$ is rescaled by the action of $\bbT$ by a monomial $\omega(M_i)$ in the diagonal elements of $\bbT$; the exponent vector of $\omega(M_i)$ is called weight of $M_i$. If $F$ is invariant for the action of $\SL_n \ttimes \SL_n$, then it is necessarily invariant for the action of $\bbT$, hence it must be linear combination of monomials such that $\omega(M_i) = 1$. The span of these monomials is the \emph{weight zero} subspace of $S^d (\bbC^{n} \ootimes \bbC^n)$. Computing a basis of the weight zero subspace significantly reduces the size of the search space; effectively, this is done by endowing the polynomial ring $\bbC[\bbC^{n} \ootimes \bbC^n]$ with a modified grading that records the weight of each variable.

Further symmetrization tricks are possible. For instance, each factor $\SL_n$ contains a copy of the Weyl group $\frakS_n$. For every weight zero monomial $M_i$, one can consider the symmetrization $C_i = \sum_{\sigma \in \frakS_n} \sigma(M_i)$. The invariant $F$ is linear combination of these symmetrizations, which usually span a proper linear subspace of the weight zero space.

Finally, in the cases of interest for us, we know that Strassen's invariant and the Oeding-Sam invariant are \emph{skew} with respect to the action of the symmetric group $\frakS_k$ which permutes the tensor factors. By skew-symmetrizing a spanning set of the weight zero subspace, one can further reduce the size of the search space.

This is the procedure implemented in \cite{daniels_code} to compute Strassen's invariant and the Oeding-Sam invariant. We illustrate the method explicitly to compute a much simpler invariant, that is the $3 \times 3$ determinant polynomial, namely the equation of $\sigma_2(\bbP^2 \times \bbP^2) \subseteq \bbP(\bbC^3 \otimes \bbC^3)$. This polynomial is invariant for the action of $\SL_3 \times \SL_3$ and for the action of $\frakS_2$ by transposition.

\begin{example}
    Let $V_1 = V_2 = \bbC^3$ and let $\det_3 \in S^3 (V_1 \otimes V_2)$ be the $3 \times 3$ determinant polynomial. It is invariant for the action of $\SL(V_1) \times \SL(V_2)$ and by transposition. Its standard expression in terms of the entries of a symbolic matrix $T = (t_{i,j})$ is the sum of six monomials and we have
    \begin{align*}
    \det_3 ( g T h ) &= \det_3(T) \text{ for every $g , h \in \SL_3$} \\
    \det_3 ( T^\bft) &= \det_3(T)
    \end{align*}
where $\cdot ^\bft$ denotes the transpose matrix.

The torus $\bbT $ is the subgroup of $\SL_3 \times \SL_3$ defined by
\[
\bbT = \left\{
\left( \left(\begin{array}{ccc} a_1 & & \\ & a_2 & \\ & & a_1^{-1}a_2^{-1}
\end{array}\right) , \left(
\begin{array}{ccc} b_1 & & \\ & b_2 & \\ & & b_1^{-1}b_2^{-1}
\end{array}\right) \right) : a_1,a_2,b_1,b_2 \in \bbC \setminus\{0\}\right\},
\]
and it acts by rescaling every monomial in $\bbC[t_{ij} : i,j = 1,2,3]$ by a Laurent monomial in $a_1,a_2,b_1,b_2$: for instance, if $(g,h) \in \bbT$, then $(g,h) \cdot t_{12} = a_1b_2 \ t_{12}$ and $(g,h) \cdot t_{13}t_{21} = (a_1 b_1^{-1}b_2^{-1})(a_2b_1)t_{13}t_{21}  = a_1a_2b_2^{-1}t_{13}t_{21}$.

The weights induce a $(\bbZ \times \bbZ^4)$-grading on the polynomial ring $\bbC[t_{ij}: i,j=1,2,3]$ such that $\deg(t_{ij})$ is the vector in $\bbZ^5$ whose first component is $1$ and the other four components define the weight of $t_{ij}$, that is the exponent vector of the monomial which rescales $t_{ij}$; for instance, $\deg(t_{12}) = (1,1,0,0,1)$ and $\deg(t_{13}t_{21}) = (2,1,1,0,-1)$. The degree $3$ weight zero space is the homogeneous component of degree $(3,0,0,0,0)$, which is a subspace of $S^3(V_1 \otimes V_2)$.

Note that $\dim S^3(V_1 \otimes V_2) = 165$ whereas the weight space has dimension $6$, and it is spanned by the six monomials appearing in $\det_3$.

In this case, symmetrizing by the Weyl action of $\frakS_3 \times \frakS_3 \subseteq \SL_3 \times \SL_3$ trivializes the problem, because $\det_3$ is the only weight zero invariant under this action. In general however it is a convenient way to further reduce the search space.

Finally, since the determinant is invariant under transposition, one can reduce the search space only considering the span of the symmetrizations of the six weight zero monomials.
\end{example}

\bibliographystyle{alphaurl}
\bibliography{tensor_rank.bib}

\end{document}